\newtheorem{theorem}{Theorem}[section]
\newtheorem*{theorem*}{Theorem}
\newtheorem{corollary}[theorem]{Corollary}
\newtheorem{lemma}[theorem]{Lemma}
\newtheorem{rem}[theorem]{Remark}
\newtheorem{proposition}[theorem]{Proposition}
\newtheorem{fact}[theorem]{Fact}
\theoremstyle{definition}
\newcommand{\ee}{\varepsilon}
\newcommand{\nn}{\mathbb{N}}
\newcommand{\rr}{\mathbb{R}}
\begin{document}

\title{Factorization of Asplund operators}

\begin{abstract} We give necessary and sufficient conditions for an operator $A:X\to Y$ on a Banach space having a shrinking FDD to factor through a Banach space $Z$ such that the Szlenk index of $Z$ is equal to the Szlenk index of $A$. We also prove that for every ordinal $\xi\in (0, \omega_1)\setminus\{\omega^\eta: \eta<\omega_1\text{\ a limit ordinal}\}$, there exists a Banach space $\mathfrak{G}_\xi$ having a shrinking basis and Szlenk index $\omega^\xi$ such that for any separable Banach space $X$ and any operator $A:X\to Y$ having Szlenk index less than $\omega^\xi$, $A$ factors through a subspace and through a quotient of $\mathfrak{G}_\xi$, and if $X$ has a shrinking FDD, $A$ factors through $\mathfrak{G}_\xi$.

\end{abstract}

\author{R.M. Causey}
\address{Department of Mathematics, Miami University, Oxford, OH 45056, USA}
\email{causeyrm@miamioh.edu}

\author{K.V. Navoyan}
\address{Department of Mathematics, University of Mississippi, Oxford, MS 38655, USA}
\email{KNavoyan@go.olemiss.edu}

\thanks{2010 \textit{Mathematics Subject Classification}. Primary: 46B03, 46B06, 47A68.}
\thanks{\textit{Key words}: Factorization, Asplund operators, Szlenk index,  operator ideals.}

\maketitle

\section{Introduction}

A celebrated result in Banach space theory is the factorization theorem of Davis, Figiel, Johnson, and Pe\l czy\'{n}ski \cite{DFJP}, which states that any weakly compact operator factors through a reflexive Banach space.  Since then, a number of classes of operators have been shown to be characterized by such  factorization property.  Beauzamy \cite{Beau} showed that any Rosenthal operator (that is, any operator not preserving an isomorphic copy of $\ell_1$) factors through a Banach space which contains no isomorphic copy of $\ell_1$, and Re\u{\i}nov \cite{Rei}, Heinrich \cite{Hei}, and Stegall \cite{Ste} independently showed that any Asplund operator factors through an Asplund Banach space. In contrast, Beauzamy \cite{Beau1} showed that there exist super weakly compact operators which do not factor through any superreflexive Banach space.   This turns out to be a particular case of a quantitative factorization problem.   More precisely, there exists an ordinal index, called the \emph{James index}, denoted by $\mathcal{J}$, which takes an operator and returns an ordinal if that operator is weakly compact, and (by convention) returns the symbol $\infty$ if that operator is not weakly compact.     It was shown in \cite{qf} that if an operator $A$ satisfies $\mathcal{J}(A)\leqslant \omega^{\omega^\xi}$, then $A$ factors through a Banach space $Z$ with $\mathcal{J}(I_Z)\leqslant \omega^{\omega^{\xi+1}}$, which is a quantified version of the David, Figiel, Johnson, Pe\l czy\'{n}ski factorization result. Given that the class of super weakly compact operators is precisely the class of operators whose James index does not exceed $\omega$, Beauzamy's negative factorization result from \cite{Beau1} witnesses that the passage from the upper estimate $\mathcal{J}(A)\leqslant \omega^{\omega^\xi}$ to a strictly larger upper estimate on the James index of $I_Z$ for a space through which $A$ factors is necessary.   Similarly, Brooker \cite{Brooker} showed that an operator with Szlenk index $\omega^\xi$ always factors through a Banach space with Szlenk index not more than $\omega^{\xi+1}$, and that there exist certain ordinals $\xi$ and operators for which the passage from $\omega^\xi$ to $\omega^{\xi+1}$ is optimal.   Such quantified factorization theorems yield useful information regarding universal factorization spaces (see \cite{qf} for further information), generalizing Figiel's example \cite{Figiel} of a separable, reflexive Banach space $Z$ such that every compact operator factors through a subspace of $Z$. 

The main goal of this work is to extend Brooker's result regarding factorization of Asplund operators.  That is, if $A:X\to Y$ is an operator and $Sz(A)=\omega^\xi$, one would like to know when $A$ factors through a Banach space $Z$ such that $Sz(Z)=\omega^\xi$, or when the passage to $\omega^{\xi+1}$ is optimal. We completely solve this problem in the case that the domain of the operator has a shrinking basis. All relevant notions regarding the $\ee$-Szlenk indices will be defined later. This result extends the factorization result of Kutzarova and Prus, which is the $\xi=1$ case of the following theorem.

\begin{theorem} Fix an ordinal $0<\xi<\omega_1$.  Suppose that $X$ is a Banach space with a shrinking basis and $A:X\to Y$ is an operator with $Sz(A)=\omega^\xi$. \begin{enumerate}[(i)]\item If $\xi=\omega^\zeta$, where $\zeta$ is a limit ordinal, then $A$ does not factor through any Banach space $Z$ with $Sz(Z)=Sz(A)$. \item $\xi=1$ or $\xi=\omega^{\zeta+1}$ for some ordinal $\zeta$, then $A$ factors through a Banach space $Z$ with $Sz(Z)=Sz(A)$ if and only if there exists an ordinal $\gamma<\omega^\xi$ such that for all $n\in\nn$, $Sz(A, 1/2^n)\leqslant \gamma^n$.   \item If $\xi=\beta+\gamma$ for some $\beta, \gamma<\xi$, then $A$ factors through a Banach space $Z$ with $Sz(Z)=Sz(A)$.  \end{enumerate}

\label{main1}

\end{theorem}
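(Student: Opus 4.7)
The plan is to split the argument along the trichotomy of cases. The unifying tool is the submultiplicativity of the $\ee$-Szlenk index for operators, which gives $Sz(A, \ee\delta)\leq Sz(A,\ee)\cdot Sz(A,\delta)$ (ordinal product). Iterating yields $Sz(A, 1/2^n)\leq Sz(A, 1/2)^n$, and when $A=BC$ factors through $Z$, combining with the rescaling estimate $Sz(A,\ee)\leq Sz(Z,\ee/(\|B\|\|C\|))$ produces $Sz(A, 1/2^n)\leq \gamma^n$ with $\gamma := Sz(Z, 1/(2\|B\|\|C\|))$. Since $Sz(Z) = \omega^\xi$ is a limit ordinal, $\gamma < \omega^\xi$.

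This yields the necessity direction in (ii) directly. For (i), suppose $\xi=\omega^\zeta$ with $\zeta$ a limit ordinal and that $A$ factored through some $Z$ with $Sz(Z)=\omega^\xi$. Then $Sz(A,1/2^n)\leq \gamma^n$ for some $\gamma<\omega^{\omega^\zeta}$. Any such $\gamma$ lies below $\omega^{\omega^\eta}$ for some $\eta<\zeta$, so $\gamma^n \leq \omega^{\omega^\eta\cdot n}$ and hence $\sup_n \gamma^n \leq \omega^{\omega^{\eta+1}} < \omega^{\omega^\zeta}$, contradicting $\sup_n Sz(A, 1/2^n) = Sz(A) = \omega^{\omega^\zeta}$. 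No such factorization can exist.

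For the sufficiency in (ii), assuming $Sz(A, 1/2^n)\leq \gamma^n$ with $\gamma<\omega^\xi$, my plan is to construct $Z$ by a Davis--Figiel--Johnson--Pe\l czy\'{n}ski style interpolation adapted to the Szlenk index. For each $n$, I pick an absolutely convex set $W_n \supseteq A(B_X)$ in $Y$ whose polar in $Y^*$ has $\ee$-Szlenk rank controlled by a function of $\gamma^n$ at an appropriate scale, and define $Z$ via the standard $\ell_2$-gluing of the gauges of $2^n W_n$. The DFJP formalism yields $A=BC$ through $Z$ automatically; the core task is to verify that the weak$^*$ Szlenk derivation of $B_{Z^*}$ terminates in fewer than $\omega^\xi$ steps, i.e., $Sz(Z)\leq \omega^\xi$. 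The case hypothesis $\xi=1$ or $\xi=\omega^{\zeta+1}$ is precisely what makes $\sup_n \gamma^n = \omega^\xi$ achievable by some $\gamma<\omega^\xi$, so the estimates close up; the matching lower bound $Sz(Z)\geq Sz(A)=\omega^\xi$ is automatic.

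For (iii), when $\xi=\beta+\gamma$ with $\beta,\gamma<\xi$, I would assemble the factorization by decomposing $A$ at the level of Szlenk index and then applying (ii)-type constructions (or the universal spaces $\mathfrak{G}_\cdot$ from the abstract) to each piece separately; the decomposability of $\xi$ ensures that the relevant subordinals avoid the forbidden limit-power form. The main obstacle throughout will be the sufficiency direction of (ii): quantitatively refining the DFJP interpolation so that the weak$^*$ Szlenk rank of $B_{Z^*}$ is genuinely controlled by the chosen $W_n$, which requires both a careful dual-side analysis (made available by the shrinking basis hypothesis on $X$, which gives $X^*=[x_n^*]$ and hence a manageable weak$^*$ topology on $Z^*$) and a delicate ordinal arithmetic computation matching $\sup_n \gamma^n$ to $\omega^\xi$ under the precise case hypothesis on $\xi$.
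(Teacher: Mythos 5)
Your argument for (i) and for the necessity half of (ii) is sound and essentially matches the paper: the paper likewise combines the rescaling estimate with submultiplicativity of $Sz(Z,\cdot)$ to obtain $Sz(A,1/2^n)\leqslant Sz(Z,1/2C)^n$, and for (i) it simply cites the fact (from \cite{C3}) that no Banach space can have Szlenk index $\omega^{\omega^\zeta}$ for $\zeta$ a limit ordinal, which your ordinal computation also recovers (apply it to $I_Z$). The real content of the theorem, however, is the sufficiency half of (ii) and part (iii), and there your proposal has a genuine gap rather than a proof. A DFJP-type interpolation with $\ell_2$-gluing of the gauges of $2^nW_n$ is, in essence, Brooker's construction; it is known to yield only $Sz(Z)\leqslant\omega^{\xi+1}$, one exponential step above $Sz(A)$, and there is no indication that this scheme can be tightened to the sharp equality $Sz(Z)=\omega^\xi$ that the theorem requires. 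You identify this as ``the main obstacle'' but leave it unresolved, and the obstacle is precisely what the theorem is about.

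The paper's actual sufficiency argument proceeds along an entirely different line, and I do not see how to reach it from the proposed DFJP framework. The paper first builds a $\xi$-well-constructed mixed Schreier space $E=X(\mathcal{G}_n,\vartheta^n)$ with $\mathcal{G}_n=\mathcal{G}[\mathcal{G}_{n-1}]$ (or, for (iii), with $\mathcal{G}_0=\mathcal{S}_\beta$ and $\mathcal{G}_n=\mathcal{F}_n[\mathcal{G}_0]$), which has $Sz(E)=\omega^\xi$ exactly by Lemma \ref{ww}. The hypothesis $Sz(A,1/2^n)\leqslant\gamma^n$ then enters through the bound $CB(\mathcal{H}(X,\textsf{F},A^*B_{Y^*},2^m/2^n))\leqslant 2Sz(A,1/2^n)<CB(\mathcal{G}_n)$, which by Gasparis' dichotomy allows one to choose a blocking $\textsf{G}$ of the shrinking FDD so that $\mathcal{H}(X,\textsf{F},A^*B_{Y^*},2^m/2^n)\cap[M_n]^{<\nn}\subset\mathcal{G}_n$. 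This produces an estimate $\|A\sum a_ix_i\|\leqslant C\|\sum a_ie_{r_i}\|_E$ against the mixed Schreier norm, and hence a factorization of $A$ through the interpolation space $X^E_\wedge(\textsf{G})$ (Remark \ref{hops}), whose Szlenk index is $\omega^\xi$ by Corollary \ref{7u}. In (iii), no growth condition on $Sz(A,1/2^n)$ is needed at all precisely because the decomposability $\xi=\beta+\gamma$ permits a different choice of $(\mathcal{G}_n)$; the paper does not decompose $A$ into pieces as you suggest. The Schreier-combinatorial machinery (regular families, the $\mathcal{H}$-trees, Gasparis' dichotomy, and the $X^E_\wedge$ interpolation norm) is the engine of the proof, and none of it is supplied by your proposal.
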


It follows from standard facts about ordinals that the cases listed in Theorem \ref{main1} are exhaustive.

The second major result of the paper is to establish an optimal result regarding universal Asplund spaces. 

\begin{theorem} Fix an ordinal $\xi\in (0,\omega_1)\setminus\{\omega^\eta: \eta\text{\ a limit ordinal}\}$.   Then there exists a Banach space $\mathfrak{G}_\xi$ with shrinking basis and $Sz(\mathfrak{G}_\xi)=\omega^\xi$ such that if $A:X\to Y$ is a separable range operator with $Sz(A)<\omega^\xi$, then $A$ factors through both a subspace and a quotient of $\mathfrak{G}_\xi$.

\end{theorem}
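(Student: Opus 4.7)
The plan is to construct $\mathfrak{G}_\xi$ as a Tsirelson-type amalgamation of a countable dense family of finite-dimensional spaces representing the class of operators with Szlenk index strictly below $\omega^\xi$. First I would enumerate a sequence $\{E_n\}_{n\in\nn}$ of finite-dimensional Banach spaces that is dense (in Banach--Mazur distance) among all finite-dimensional subspaces occurring inside separable Banach spaces of Szlenk index less than $\omega^\xi$. Then, for a Schreier family $\mathcal{S}_\alpha$ with ordinal parameter $\alpha$ carefully calibrated to $\xi$, I would impose a Tsirelson-type implicit norm on the algebraic direct sum $c_{00}(\bigoplus_n E_n)$ and let $\mathfrak{G}_\xi$ be its completion. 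The concatenation of bases of the $E_n$'s gives a basis for $\mathfrak{G}_\xi$, and shrinkingness follows from the usual Tsirelson-type argument ruling out copies of $\ell_1$.

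The next step is to verify $Sz(\mathfrak{G}_\xi) = \omega^\xi$, which reduces to an ordinal computation for Tsirelson-type norms in the style of Brooker; the parameters are chosen precisely so this computation balances on the nose. More importantly, I would establish a universal embedding property: every separable Banach space $W$ with a shrinking basis and $Sz(W) < \omega^\xi$ embeds isomorphically as a block subspace of $\mathfrak{G}_\xi$. This uses the density of the enumeration combined with the fact that a Tsirelson-type norm respects block-diagonal sums, so $W$ can be embedded one FDD-block at a time by matching each block to an appropriate $E_n$.

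For the subspace factorization, given a separable-range operator $A:X\to Y$ with $Sz(A) < \omega^\xi$: since the exclusion $\xi \notin \{\omega^\eta : \eta\text{ a limit ordinal}\}$ corresponds exactly to the positive cases (ii)--(iii) of Theorem \ref{main1}, I would apply Theorem \ref{main1} (after a preliminary Zippin-style embedding to reduce to the shrinking-FDD-domain case, which the theorem assumes) to factor $A = B\circ C$ through a Banach space $W$ with shrinking basis and $Sz(W) < \omega^\xi$. The universal embedding $W \hookrightarrow \mathfrak{G}_\xi$ then composes with the factorization to deliver the subspace factorization. The quotient factorization follows by a dual argument: $\mathfrak{G}_\xi^*$ has a boundedly complete basis and inherits a dual universal property, so applying the subspace-style reduction to $A^*$ and invoking the standard correspondence between subspaces of $\mathfrak{G}_\xi^*$ and quotients of $\mathfrak{G}_\xi$ yields the desired factorization.

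The main obstacle is the combinatorial balance in the construction: calibrating $\mathcal{S}_\alpha$, the enumeration $\{E_n\}$, and the implicit norm so that $Sz(\mathfrak{G}_\xi)$ lands exactly on $\omega^\xi$ (not higher) while $\mathfrak{G}_\xi$ remains rich enough to contain every shrinking-basis space of smaller Szlenk index as a block subspace. Secondary technical difficulties include the Zippin-style reduction to the shrinking-FDD case (needed to invoke Theorem \ref{main1}) and the duality argument for the quotient factorization, which requires controlling the Szlenk index of $\mathfrak{G}_\xi^*$.
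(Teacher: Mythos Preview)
Your plan has several concrete gaps that would not close as written.

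First, the invocation of Theorem \ref{main1} is misapplied. Given $A$ with $Sz(A)<\omega^\xi$, write $Sz(A)=\omega^\zeta$ with $\zeta<\xi$. Theorem \ref{main1} classifies when $A$ factors through $Z$ with $Sz(Z)=\omega^\zeta$, and its trichotomy is indexed by the shape of $\zeta$, not of $\xi$; the exclusion on $\xi$ tells you nothing about which case $\zeta$ falls into. Worse, case (ii) is \emph{conditional} on a submultiplicativity estimate that may fail. The paper uses instead Brooker's unconditional result (in the limit case) and the power-type result of \cite{TAMS} (in the successor case) to factor $A$ through a separable $Z$ with $Sz(Z)\leqslant\omega^{\zeta+1}<\omega^\xi$ (resp.\ $\textsf{p}_\zeta(Z)<2$). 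Your preliminary ``Zippin-style embedding'' is also problematic: the domain $X$ need not have separable dual, so you cannot embed it into a space with a shrinking basis before first passing to an intermediate Asplund space.

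Second, the duality argument for the quotient factorization does not work. The Szlenk index of $A^*$ bears no simple relation to $Sz(A)$, and subspaces of $\mathfrak{G}_\xi^*$ correspond to quotients of $\mathfrak{G}_\xi^{**}$, not of $\mathfrak{G}_\xi$, unless the subspace is weak$^*$-closed. The paper avoids this entirely: the embedding theorem (Theorem \ref{ute}) yields that the intermediate $Z$ is isomorphic to \emph{both} a subspace and a quotient of a single space $W^E_\wedge(\textsf{F})$, and Proposition \ref{Schechtman} then embeds $W^E_\wedge(\textsf{F})$ \emph{complementably} into $\mathfrak{G}_\xi$, delivering the subspace and quotient factorizations simultaneously.

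Finally, the paper's $\mathfrak{G}_\xi$ is not a Tsirelson-type implicit norm over a dense family of finite-dimensional pieces; it is built from Schechtman's universal-FDD space $\mathfrak{U}$ via the $\wedge$-interpolation $\mathfrak{U}^E_\wedge(\textsf{U})$, with $E$ a Baernstein space $X_{\zeta,2}$ (successor $\xi$) or a $\xi$-well-constructed mixed Schreier space (limit $\xi$). The Schechtman property is precisely what yields the complemented universal embedding in Proposition \ref{Schechtman}; a bare Tsirelson-type amalgam would not obviously contain a \emph{complemented} copy of each $W^E_\wedge(\textsf{F})$, which is what the argument requires.
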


\section{Coordinate systems}

Throughout, $\mathbb{K}$ will denote the scalar field, which is either $\mathbb{R}$ or $\mathbb{C}$.   Given a subset $S$ of a Banach space, $[S]$ will denote the closed span of $S$. Given a Banach space $X$ and $K\subset X^*$ weak$^*$-compact, we let $r_K\equiv 0$ if $K=\varnothing$, and otherwise we let $r_K(x)=\max_{x^*\in K}\text{Re\ }x^*(x)$.

We recall that a \emph{Markushevich basis} (or $M$-\emph{basis}) for a Banach space is a biorthogonal system $(x_i, x^*_i)_{i\in I}\subset X\times X^*$ such that $[x_i:i\in I]=X$ and $\cap_{i\in I}\ker(x^*_i)=\{0\}$.    For us, an \emph{FMD} for the Banach space $X$ will be a sequence $\textsf{F}=(F_n)_{n=1}^\infty$ of subspaces of $X$ such that there exist an $M$-basis $(x_i, x^*_i)_{i=1}^\infty$ and a sequence $0=k_0<k_1<\ldots$ of natural numbers such that for each $n\in\nn$, $$F_n=[x_i: k_{n-1}<i\leqslant k_n].$$   If $K\subset X^*$, we say that an FMD $\textsf{F}$ for $X$ is $K$-\emph{shrinking} provided that there exist an $M$-basis $(x_i, x^*_i)_{i=1}^\infty$ and $0=k_0<k_1<\ldots$ such that $$F_n=[x_i: k_{n-1}<i\leqslant k_n]$$ and such that $K\subset [x^*_i:i\in\nn]$.    In the case that $A:X\to Y$ is an operator and $K=A^*B_{Y^*}$, we will say $\textsf{F}$ is $A$-\emph{shrinking} rather than $K$-shrinking.  If $K=B_{X^*}$, we will simply say $\textsf{F}$ is \emph{shrinking}.

We say a sequence $(u_i)_{i=1}^\infty$ in $X$ is a \emph{block sequence with respect to} $\textsf{F}$ provided that there exist natural numbers $0=k_0<k_1<\ldots$ such that for each $i\in\nn$, $u_i\in [F_j: k_{i-1}<j\leqslant k_i]$.

We will be primarily concerned with separable Banach spaces, and exclusively concerned with weak$^*$-fragmentable sets.    We recall that if $X$ is a Banach space and $K\subset X^*$ is weak$^*$-compact, we say $K$ is \emph{weak}$^*$-\emph{fragmentable} if for any $\ee>0$ and any non-empty subset $L$ of $K$, there exists a weak$^*$-open set $v\subset X^*$ such that $v\cap L\neq \varnothing$ and $\text{diam}(v\cap L)\leqslant \ee$.    It is a consequence of the Baire category theorem and topological considerations that if $X$ is a separable Banach space and $K\subset X^*$ is weak$^*$-compact, then $K$ is weak$^*$-fragmentable if and only if it is norm separable.

One benefit of the notion of a $K$-shrinking FMD is that if $X$ is separable and $K\subset X^*$ is norm separable, then $X$ admits a $K$-shrinking FMD. Indeed, assume $K$ is norm separable and does not lie in the span of finitely many vectors (otherwise the result is trivial).  We may fix $(v_n)_{n=1}^\infty \subset X$ norm dense in $X$, $(v^*_n)_{n=1}^\infty$ weak$^*$-dense in $X^*$, and $(u^*_n)_{n=1}^\infty$ norm dense in $K$.  By the usual method of constructing an $M$-basis for a separable Banach space, one recursively selects an $M$-basis $(x_n, x^*_n)_{n=1}^\infty$ having the property that for each $n\in\nn$, $v_n\in [x_i: i\leqslant 3n]$, $v^*_n, u^*_n\in [x^*_i: i\leqslant 3n]$.  The weakening of the notion of shrinking FMD to the notion of a $K$-shrinking FMD allows us to study norm separable subsets of the duals of Banach spaces with non-separable duals, for example $K=A^*B_{Y^*}$, where $A:\ell_1\to Y$ is an Asplund operator.  

The primary property of a $K$-shrinking FMD, say $\textsf{F}$,  with which we will be concerned is that a bounded block sequence $(y_n)_{n=1}^\infty$ with respect to $\textsf{F}$ must be $\sigma(X, K)$-null.  Indeed, suppose $(x_i, x^*_i)_{i=1}^\infty$ is an $M$-basis such that $K\subset [x^*_n: n\in\nn]$ and $0=k_0<k_1<\ldots$ is such that for each $n\in\nn$, $F_n=[x_i: k_{n-1}<i\leqslant k_n]$.    Then if $(y_n)_{n=1}^\infty$ is a bounded block sequence with respect to $\textsf{F}$, to see that $(y_n)_{n=1}^\infty$ is $\sigma(X ,K)$-null,  it is sufficient to know that $(y_n)_{n=1}^\infty$ is pointwise null on a subset of $X^*$ the closed span of which contains $K$.  We then note that $\{x^*_n: n\in\nn\}$ is such a set.

Given an FMD $\textsf{F}$ for the Banach space $X$, a weak$^*$-compact subset $K\subset X^*$, and an infinite subset $M$ of $\nn$,  we define a seminorm $\langle \cdot\rangle_{X, \textsf{F}, K,M}$ on $c_{00}$ by $$\Bigl\langle \sum_{i=1}^\infty a_i e_i\Bigr\rangle_{X, \textsf{F}, K, M}= \max\Bigl\{r_K(\sum_{i=1}^\infty a_i x_i): (x_i)_{i=1}^\infty \in B_X^\nn\cap\prod_{i=1}^\infty [F_j: m_{i-1}<j\leqslant m_i]\Bigr\},$$ where $m_0=0$ and $M=\{m_1, m_2, \ldots\}$, $m_1<m_2<\ldots$.    It is evident that for any $(a_i)_{i=1}^\infty\in c_{00}$ and any sequence $(\ee_i)_{i=1}^\infty$ of unimodular scalars, $$\Bigl\langle \sum_{i=1}^\infty a_i e_i\Bigr\rangle_{X, \textsf{F},K, M}=\Bigl\langle \sum_{i=1}^\infty a_i \ee_i e_i\Bigr\rangle_{X, \textsf{F}, K, M}$$ for any infinite subset $M$ of $\nn$.

We recall that a \emph{finite dimensional decomposition} (or \emph{FDD}) for a Banach space $X$ is a sequence $\textsf{F}=(F_n)_{n=1}^\infty$ of finite dimensional, non-zero subspaces of $X$ such that for any $x\in X$, there exists a unique sequence $(x_n)_{n=1}^\infty\in \prod_{n=1}^\infty F_n$ such that $x=\sum_{n=1}^\infty x_n$.    From this it follows that for each $n\in\nn$, the projection $P^\textsf{F}_n:X\to F_n$ given by $P^\textsf{F}_nx=x_n$, where $x=\sum_{m=1}^\infty x_m$ and $(x_m)_{n=1}^\infty\in \prod_{n=1}^\infty F_n$, is well-defined and bounded.  Furthermore, for a (finite or infinite) interval $I\subset \nn$, we let $I^\textsf{F}=\sum_{n\in I} P^\textsf{F}_n$.  It follows from the principle of uniform boundedness that $$\sup \{\|I^\textsf{F}\|: I\subset \nn\text{\ is an interval}\}<\infty.$$  We refer to this quantity as the \emph{projection constant of} $\textsf{F}$ in $X$.  If the projection constant of $\textsf{F}$ in $X$ is $1$, we say $\textsf{F}$ is \emph{bimonotone}.   It is well-known  that if $\textsf{F}$ is an FDD for $X$, then there exists an equivalent norm $|\cdot|$ on $X$ such that $\textsf{F}$ is a bimonotone FDD for $(X, |\cdot|)$.   We also remark that any FDD is also an FMD. 

If  $\textsf{F}$ is a bimonotone FDD for $X$, then $F_n^*=(P^\textsf{F}_n)^*(X^*)\subset X^*$ isometrically and canonically. Then $\textsf{F}^*:=(F^*_n)_{n=1}^\infty$ is a bimonotone  FDD for its closed span in $X^*$.   We let $X^{(*)}$ denote this closed span.     We say $\textsf{F}$ is \emph{shrinking} provided that $X^{(*)}=X^*$, which occurs if and only if any bounded block sequence with respect to $\textsf{F}$ is weakly null.    Let us note that $X^{(*)(*)}=X$.

Let  $\textsf{F}$ be an FDD for $X$.  For $x\in X$,  we let $\text{supp}_\textsf{F}(x)=\{n\in\nn: P^\textsf{F}_nx\neq 0\}$. We let $c_{00}(\textsf{F})$ denote the set of those $x\in X$ such that $\text{supp}_\textsf{F}(x)$ is finite.   We write $n<x$ (resp. $n\leqslant x$) to mean that $n<\min \text{supp}_\textsf{F}(x)$ (resp. $n\leqslant \min \text{supp}_\textsf{F}(x)$).  We write $x<y$ to mean that $\max\text{supp}_\textsf{F}(x)< \min \text{supp}_\textsf{F}(y)$.

Of course, any Schauder basis $(x_i)_{i=1}^\infty$ gives rise to the FDD $(\text{span}(x_i))_{i=1}^\infty$, and each of the definitions above for an FDD can be adapted to a Schauder basis.   In particular, if $(x_i)_{i=1}^\infty$ is a Schauder basis,  we let $E^{(*)}=[x^*_i: i\in\nn]\subset E^*$ denote the closed span of the coordinate functionals. Throughout, we let $\textsf{E}$ denote the FDD arising from the canonical $c_{00}$ basis.

We say a Banach space $E$ is a \emph{sequence space} provided that the canonical $c_{00}$ basis is a normalized  basis for $E$ having the property that for any scalar sequence $(a_i)_{i=1}^n$ and any unimodular scalars $(\ee_i)_{i=1}^n$, $$\|\sum_{i=1}^n a_ie_i\|=\|\sum_{i=1}^n a_i \ee_i e_i\|.$$   We say the sequence space $E$ has property  \begin{enumerate}[(i)]\item $R$ provided that for any strictly increasing sequences $(k_i)_{i=1}^\infty$, $(l_i)_{i=1}^\infty$ of natural numbers such that $k_i\leqslant l_i$ for each $i\in\nn$, any $n\in\nn$, and any scalars $(a_i)_{i=1}^n$, $$\|\sum_{i=1}^n a_i e_{k_i}\|\leqslant \|\sum_{i=1}^n a_i e_{l_i}\|,$$ \item $S$ provided that there exists a constant $C$ such that for any strictly increasing sequences $(k_i)_{i=1}^\infty$, $(l_i)_{i=1}^\infty$ of natural numbers such that $l_i<k_{i+1}$ for all $i\in \nn$,  any $n\in\nn$, and any scalars $(a_i)_{i=1}^n$, $$\|\sum_{i=1}^n a_i e_{l_i}\|\leqslant C\|\sum_{i=1}^n a_i e_{k_i}\|.,$$ \item $T$ provided that there exists a constant $C$ such that for any strictly increasing sequence $(k_i)_{i=1}^\infty$ of natural numbers, any $n\in \nn$,  and any sequence $(x_i)_{i=1}^n \subset X$ such that $x_i\in [e_j: k_{i-1}<j\leqslant k_i]$ (where $k_0=0$), $$\|\sum_{i=1}^n x_i\| \leqslant C \|\sum_{i=1}^n \|x_i\|e_{k_i}\|.$$   \end{enumerate}

Given a Banach space $X$ with FDD $\textsf{F}$ and a sequence space $E$, we define three quantities on $c_{00}(\textsf{F})$.  We let $$\|x\|_{X^E_\vee(\textsf{F})}=\sup\Bigl\{\|\sum_{i=1}^\infty \|I^\textsf{F}_i x\|_X e_{\max I_i}\Bigr\|_E: I_1<I_2<\ldots, I_i\text{\ an interval}\Bigr\},$$ $$[x]_{X^E_\wedge(\textsf{F})}= \inf\Bigl\{\|\sum_{i=1}^\infty \|I_i^\textsf{F}x\|_X e_{\max I_i}\|_E: I_1<I_2<\ldots, I_i\text{ an interval}, \nn=\cup_{i=1}^\infty I_i\Bigr\},$$ and $$\|x\|_{X^E_\wedge(\textsf{F})}=\inf\Bigl\{\sum_{i=1}^n [x_i]_{X^E_\wedge(\textsf{F})}: n\in \nn, x_i\in c_{00}(\textsf{F}), x=\sum_{i=1}^n x_i\Bigr\}.$$

\begin{proposition} Let $E$ be a sequence space and let $X$ be a Banach space with bimonotone FDD $\textsf{\emph{F}}$.   \begin{enumerate}[(i)]\item  $\textsf{\emph{F}}$ is a bimonotone FDD for both $X^E_\vee(\textsf{\emph{F}})$ and $X^E_\wedge(\textsf{\emph{F}})$, and $[I^\textsf{F}\cdot]_{X^E_\wedge(\textsf{\emph{F}})} \leqslant [\cdot]_{X^E_\wedge(\textsf{\emph{F}})}$ on $c_{00}(\textsf{\emph{F}})$. \item $(X^E_\vee(\textsf{\emph{F}}))^{(*)}=(X^{(*)})^{E^{(*)}}_\wedge(\textsf{\emph{F}}^*)$ and $(X^E_\vee(\textsf{\emph{F}}))^{(*)}=(X^{(*)})^{E^{(*)}}_\wedge(\textsf{\emph{F}}^*)$\end{enumerate}

\label{opposition}

\end{proposition}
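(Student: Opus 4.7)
I read the second equation in (ii) as a typographical error for $(X^E_\wedge(\textsf{F}))^{(*)} = (X^{(*)})^{E^{(*)}}_\vee(\textsf{F}^*)$; the proof of both duality statements is the same after swapping the roles of $\vee$ and $\wedge$. Throughout I use that sign-invariance in the definition of a sequence space gives $1$-unconditionality of $(e_i)$ in $E$, so $|a_i|\leq |b_i|$ for all $i$ implies $\|\sum a_i e_i\|_E\leq \|\sum b_i e_i\|_E$.

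For (i), I first check that $\|\cdot\|_{X^E_\vee(\textsf{F})}$ and $\|\cdot\|_{X^E_\wedge(\textsf{F})}$ are norms on $c_{00}(\textsf{F})$: a single-interval witness gives $\|x\|_{X^E_\vee(\textsf{F})}\geq \|x\|_X$, while for the wedge norm I observe that in any partition $\bigcup I_i=\nn$ the interval containing a given $n$ produces a contribution dominating $\|P_n^\textsf{F} x\|_X$, which after applying the triangle inequality to any decomposition gives $\|x\|_{X^E_\wedge(\textsf{F})}\geq \|P_n^\textsf{F} x\|_X$. For the bimonotonicity claim and the bracket inequality, fix an interval $J$ and either an interval sequence (for $\vee$) or a partition (for $\wedge$), $I_1<I_2<\cdots$. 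Bimonotonicity of $\textsf{F}$ in $X$ gives $\|I_i^\textsf{F} J^\textsf{F} x\|_X = \|(I_i\cap J)^\textsf{F} x\|_X \leq \|I_i^\textsf{F} x\|_X$, and $1$-unconditionality of $(e_i)$ then forces $\|\sum_i \|I_i^\textsf{F} J^\textsf{F} x\|_X e_{\max I_i}\|_E \leq \|\sum_i \|I_i^\textsf{F} x\|_X e_{\max I_i}\|_E$ termwise. Passing to sup (resp.\ inf) yields $\|J^\textsf{F} x\|_{X^E_\vee(\textsf{F})}\leq \|x\|_{X^E_\vee(\textsf{F})}$ and $[J^\textsf{F} x]_\wedge \leq [x]_\wedge$; the latter upgrades to $\|J^\textsf{F} x\|_\wedge \leq \|x\|_\wedge$ by applying $J^\textsf{F}$ termwise to a near-optimal decomposition.

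For (ii), write $Z = X^E_\vee(\textsf{F})$ and $W = (X^{(*)})^{E^{(*)}}_\wedge(\textsf{F}^*)$; I aim to identify $W=Z^{(*)}$ isometrically. The easy half is a H\"older bound: for $x\in c_{00}(\textsf{F})$, $y^*\in c_{00}(\textsf{F}^*)$, and any partition $\bigcup I_i = \nn$,
\[ |y^*(x)| = \Bigl|\sum_i (I_i^{\textsf{F}^*} y^*)(I_i^\textsf{F} x)\Bigr| \leq \sum_i \|I_i^{\textsf{F}^*} y^*\|_{X^{(*)}}\|I_i^\textsf{F} x\|_X \leq \|x\|_Z\cdot \Bigl\|\sum_i \|I_i^{\textsf{F}^*} y^*\|_{X^{(*)}} e_{\max I_i}\Bigr\|_{E^{(*)}} \]
via the $1$-unconditional pairing of $E$ with $E^{(*)}$. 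Infimizing over partitions and decompositions gives $|y^*(x)|\leq \|x\|_Z\,\|y^*\|_\wedge$, so the canonical map $W\to Z^{(*)}$ is contractive; dually, $\|x\|_{W^*}\leq \|x\|_Z$ on $c_{00}(\textsf{F})$. For the matching lower bound, given $\ee>0$ I pick intervals $I_1<I_2<\cdots$ with $\|\sum\|I_i^\textsf{F} x\|_X e_{\max I_i}\|_E > \|x\|_Z-\ee$, invoke $1$-unconditional duality in $E$ to find nonnegative $(b_i)$ with $\sum b_i e_{\max I_i}^*$ of $E^{(*)}$-norm at most $1$ and $\sum b_i\|I_i^\textsf{F} x\|_X$ near $\|x\|_Z$, and use Hahn--Banach (noting that $X^{(*)}$ is $1$-norming for $X$ since $X^{(*)(*)}=X$) to select $z_i^*\in X^{(*)}$ supported on $I_i$, $\|z_i^*\|\leq 1$, with $z_i^*(I_i^\textsf{F} x)$ close to $\|I_i^\textsf{F} x\|_X$. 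Setting $y^*=\sum b_i z_i^*$ and extending $(I_i)$ to a covering partition by singletons, the extended partition computes $[y^*]_\wedge \leq \|\sum b_i e_{\max I_i}\|_{E^{(*)}}\leq 1$, while $y^*(x)\geq \|x\|_Z - 3\ee$; hence $\|x\|_{W^*}=\|x\|_Z$ on $c_{00}(\textsf{F})$.

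To conclude, (i) applied to $(X^{(*)}, \textsf{F}^*, E^{(*)})$ makes $\textsf{F}^*$ a bimonotone FDD for $W$, so the general fact $W^{(*)(*)}=W$ applies. The equality $\|x\|_{W^*}=\|x\|_Z$ on $c_{00}(\textsf{F})$ identifies $W^{(*)}$---the closed span of $\{F_n\}$ in $W^*$---isometrically with $Z$, and one more dualization gives $W=W^{(*)(*)}=Z^{(*)}$, which is (ii). I expect the main obstacle to be the Hahn--Banach bookkeeping in the lower-bound construction: selecting the $z_i^*$ inside $X^{(*)}$ with the correct support, and balancing the $\ee$-errors so that $\|y^*\|_W\leq 1$ and $y^*(x)$ approaches $\|x\|_Z$ simultaneously. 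The rest is formal once $1$-unconditionality and $X^{(*)(*)}=X$ are in hand.
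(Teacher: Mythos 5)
Your proposal is correct and follows essentially the same route as the paper: the commuting-projections/$1$-unconditionality argument for (i), the H\"older-type pairing estimate for the contractive direction of (ii), a norming-functional construction supported on the chosen intervals for the reverse inequality, and a final bidualization using $W^{(*)(*)}=W$ (and your reading of the second identity in (ii) as a typo matches the intended statement). The only cosmetic difference is that the paper avoids your $\ee$-bookkeeping by taking exact norming functionals $x_i^*=I_i^{\textsf{F}^*}x_i^*\in S_{X^*}$ (a Hahn--Banach functional composed with the interval projection, which automatically lies in the span of finitely many coordinate functionals and has norm at most $1$ by bimonotonicity).
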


\begin{proof} Throughout the proof, for ease of notation, we write $X_\vee$ and $X_\wedge$ in place of $X_\vee^E(\textsf{F})$ and $X_\wedge^E(\textsf{F})$, respectively. 

$(i)$ Let $I$ be an interval in $\nn$.   Then for any $x\in c_{00}(\textsf{F})$, \begin{align*} \|I^\textsf{F}x\|_{X_\vee} & =\sup \Bigl\{\|\sum_{i=1}^\infty \|I^\textsf{F}_i I^\textsf{F}x\|_X e_{\max I_i}\|_E\Bigr\}   = \sup \Bigl\{\|\sum_{i=1}^\infty \|I^\textsf{F}I^\textsf{F}_ix\|_X e_{\max I_i}\|_E\Bigr\} \\ &\leqslant \sup \Bigl\{\|\sum_{i=1}^\infty \|I^\textsf{F}_i x\|_X e_{\max I_i}\|_E\Bigr\}=\|x\|_{X_\vee}.\end{align*} Here, each supremum is taken over the set of all sequences of intervals $I_1<I_2<\ldots$ with $\cup_{i=1}^\infty I_i=\nn$.   Replacing the suprema above with infima gives that $[I^\textsf{F}x]_{X_\wedge}\leqslant [x]_{X_\wedge}$, and \begin{align*} \|I^\textsf{F}x\|_{X_\wedge} & = \inf\Bigl\{\sum_{i=1}^n [x_i]_{X_\wedge}: I^\textsf{F}x=\sum_{i=1}^n x_i\Bigr\}= \inf\Bigl\{\sum_{i=1}^n [I^\textsf{F}x_i]_{X_\wedge}: x=\sum_{i=1}^n x_i\Bigr\} \\ & \leqslant \Bigl\{\sum_{i=1}^n [x_i]_{X_\wedge}: x=\sum_{i=1}^n x_i\Bigr\}=\|x\|_{X_\wedge}. \end{align*}

$(ii)$ In the proof, we let $X^*_\wedge= (X^{(*)})^{E^{(*)}}_\wedge(\textsf{F}^*)$ and $(X^E_\vee(\textsf{F}))^{(*)}=(X_\vee)^{(*)}$. Fix $x\in c_{00}(\textsf{F})$, $x^*\in c_{00}(\textsf{F}^*)$, and a sequence of intervals $I_1<I_2<\ldots$ with $\cup_i I_i=\nn$.  Then \begin{align*} |x^*(x)| & \leqslant \sum_{i=1}^\infty |I^{\textsf{F}^*}_ix^*(I^\textsf{F}_i x)| \leqslant \sum_{i=1}^\infty \|I^{\textsf{F}^*}_ix^*\|_{X^*}\|I^\textsf{F}_i x\|_X \\ &  = \Bigl(\sum_{i=1}^\infty \|I^{\textsf{F}^*}_i x^*\|_{X^*} e_{\max I_i}^*\Bigr)\Bigl(\sum_{i=1}^\infty \|I^\textsf{F}_i x\|_Xe_{\max I_i}\Bigr) \\ & \leqslant \Bigl\|\sum_{i=1}^\infty \|I^{\textsf{F}^*}_i x^*\|_{X^*} e_{\max I_i}^*\Bigr\|_{E^{(*)}}\Bigl\|\sum_{i=1}^\infty \|I^\textsf{F}_i x\|_Xe_{\max I_i}\Bigr\|_E \\ & \leqslant \Bigl\|\sum_{i=1}^\infty \|I^{\textsf{F}^*}_i x^*\|_{X^*} e_{\max I_i}^*\Bigr\|_{E^{(*)}} \|x\|_{X_\vee}. \end{align*}   Taking the infium over such sequences $(I_i)_{i=1}^\infty$ yields that $|x^*(x)|\leqslant [x^*]_{X^*_\wedge}\|x\|_{X_\vee}$ for any $x^*\in c_{00}(\textsf{F}^*)$ and $x\in c_{00}(\textsf{F})$.    Now for any $x^*\in c_{00}(\textsf{F}^*)$ and $x\in c_{00}(\textsf{F})$, $$|x^*(x)| \leqslant \inf\Bigl\{\sum_{i=1}^n |x^*_i(x)|: x^*=\sum_{i=1}^n x_i^*\Bigr\} \leqslant \|x\|_{X_\vee} \inf\Bigl\{\sum_{i=1}^n [x_i^*]_{X^*_\wedge}: x^*=\sum_{i=1}^n x^*_i\Bigr\} = \|x^*\|_{X^*_\wedge}\|x\|_{X_\vee}.$$    This yields that the formal identity from $X^*_\wedge$ to $(X_\vee)^{(*)}$ is well-defined with norm $1$.   Restricting the adjoint of the formal identity to $c_{00}(\textsf{F})$ yields that the formal identity from $X_\vee=(X_\vee)^{(*)(*)}$ to $(X^*_\wedge)^{(*)}$ has norm $1$.

Now fix $x\in c_{00}$ with $\|x\|_{X_\vee}>1$.  Fix $I_1<I_2<\ldots$ such that $\|\sum_{i=1}^\infty \|I^\textsf{F}_i x\|_X e_{\max I_i}\|_E>1$.  We may fix $(a_i)_{i=1}^\infty \in c_{00}$ such that $\|\sum_{i=1}^\infty a_i e_{\max I_i}^*\|_{E^{(*)}}=1$ and $\sum_{i=1}^\infty a_i \|I^\textsf{F}_i x\|_X >1$.     We may also fix $(x^*_i)_{i=1}^\infty \in S_{X^*}^\nn$ such that $x^*_i=I^{\textsf{F}^*}_i x^*_i$ and $x^*_i(x_i)=\|x_i\|_X$ for all $i\in\nn$.   Now let $x^*=\sum_{i=1}^\infty a_i x^*_i\in c_{00}(\textsf{F}^*)$.  Note that $$\|x^*\|_{X^*_\wedge} \leqslant [x^*]_{X^*_\wedge} \leqslant \|\sum_{i=1}^\infty \|I^{\textsf{F}^*}_i x^*\|_{X^*} e_{\max I_i}^*\|_{E^{(*)}}=1$$ and $$x^*(x)= \sum_{i=1}^\infty a_i \|I^{\textsf{F}}_i x\|_X >1,$$ whence $\|x\|_{(X^*_\wedge)^{(*)}}>1$.   This yields that the formal identity from $(X^*_\wedge)^{(*)}$ to $X_\vee$ has norm $1$, and is therefore an isometric isomorphism by the last fact from the previous paragraph. Restricting the adjoint of the formal identity to $c_{00}(\textsf{F}^*)$ yields that the formal identity from $X^*_\wedge=(X^*_\wedge)^{(*)(*)}$ to $(X_\vee)^{(*)}$ has norm $1$, and is therefore also an isometric isomorphism. 

\end{proof}

\begin{rem}\upshape It follows from standard arguments that the closed unit ball of $X^E_\wedge(\textsf{F})$ is the closed, convex hull of those $x\in c_{00}(\textsf{F})$ such that $[x]_{X^E_\wedge(\textsf{F})}\leqslant 1$.   Furthermore, it follows from the fact that for any interval $I\subset \nn$, $[I^\textsf{F}\cdot]_{X^E_\wedge(\textsf{F})}\leqslant [\cdot]_{X^E_\wedge(\textsf{F})}$ that any $x\in [F_j: j\in I]$ lies in the closed convex hull of vectors $y\in [F_j: j\in I]$ such that $[y]_{X^E_\wedge(\textsf{F})} \leqslant \|x\|_{X^E_\wedge(\textsf{F})}$. 

\label{ily}

\end{rem}

\begin{lemma} Let $\textsf{\emph{F}}$ be a bimonotone FDD for the Banach space $X$ and let $E$ be a sequence space.  With $K=B_{(X^E_\wedge(\textsf{\emph{F}}))^*}$, for any infinite subset $M$ of $\nn$, $$\Bigl \langle \cdot \Bigr\rangle_{X^E_\wedge(\textsf{\emph{F}}), \textsf{\emph{F}}, K, M} \leqslant 2\Bigl\langle \cdot \Bigr\rangle_{E, \textsf{\emph{E}}, B_{E^*}, M}.$$ Furthermore, if $E$ has property $R$, the inequality holds without the factor of $2$.

\label{trip}

\end{lemma}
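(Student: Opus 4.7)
The plan is to unfold $\langle\cdot\rangle_{Y,\textsf{F},B_{Y^*},M}$ with $Y:=X^E_\wedge(\textsf{F})$: since $r_{B_{Y^*}}(v)=\|v\|_Y$, this seminorm on $c_{00}$ equals the supremum of $\|\sum_i a_i y_i\|_Y$ over $y_i\in B_Y\cap[F_j:m_{i-1}<j\leqslant m_i]$. The first reduction replaces $\|y_i\|_Y\leqslant 1$ with the stronger $[z_i]_Y\leqslant 1$. By Remark \ref{ily}, each $y_i$ lies in the closed convex hull of vectors $z\in[F_j:m_{i-1}<j\leqslant m_i]$ with $[z]_Y\leqslant 1$, so $y_i\approx\sum_{k=1}^{N_i}\lambda_{i,k}z_{i,k}$ with controlled error. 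Expanding
$$\sum_i a_i\sum_k\lambda_{i,k}z_{i,k}=\sum_\kappa\Bigl(\prod_i\lambda_{i,\kappa(i)}\Bigr)\sum_i a_i z_{i,\kappa(i)}$$
and invoking convexity of $\|\cdot\|_Y$ reduces the problem, up to an arbitrarily small error of order $(\sum_i|a_i|)\varepsilon$, to bounding $\|\sum_i a_i z_i\|_Y$ whenever $[z_i]_Y\leqslant 1$ and $z_i\in[F_j:m_{i-1}<j\leqslant m_i]$.

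For this reduced problem I apply $\|\cdot\|_Y\leqslant[\cdot]_Y$. Fix $\delta>0$ and, for each $i$, choose intervals $I_1^{(i)}<I_2^{(i)}<\cdots$ covering $\nn$ with $\bigl\|\sum_j\|I_j^{(i)\,\textsf{F}}z_i\|_X\, e_{\max I_j^{(i)}}\bigr\|_E\leqslant 1+\delta$. Since $z_i$ is supported in $(m_{i-1},m_i]$, only the intervals meeting $(m_{i-1},m_i]$ contribute; enumerate these as $\tilde I^{(i)}_1<\cdots<\tilde I^{(i)}_{n_i}$ and set $J_{i,l}:=\tilde I^{(i)}_l\cap(m_{i-1},m_i]$. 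The $J_{i,l}$'s partition $(m_{i-1},m_i]$ and, concatenated across $i$, partition $\nn$ into consecutive intervals. Because $z_{i'}$ vanishes on $(m_{i-1},m_i]$ for $i'\neq i$, one has $J_{i,l}^{\textsf{F}}\bigl(\sum_{i'}a_{i'}z_{i'}\bigr)=a_i\,\tilde I^{(i)\,\textsf{F}}_l z_i$, so with $\alpha_{i,l}:=\|\tilde I^{(i)\,\textsf{F}}_l z_i\|_X$,
$$\Bigl[\sum_i a_i z_i\Bigr]_Y\leqslant\Bigl\|\sum_{i,l}|a_i|\alpha_{i,l}e_{\max J_{i,l}}\Bigr\|_E.$$

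The main obstacle, and the source of the factor $2$, is controlling the $E$-norm of $u_i:=\sum_l\alpha_{i,l}e_{\max J_{i,l}}\in[e_j:m_{i-1}<j\leqslant m_i]$. For $l<n_i$ the interval $J_{i,l}$ does not reach $m_i$, so $\max J_{i,l}=\max\tilde I^{(i)}_l$, and the $1$-unconditionality of $(e_k)$ in $E$ (implied by unimodular invariance in a sequence space) yields $\|\sum_{l<n_i}\alpha_{i,l}e_{\max J_{i,l}}\|_E\leqslant 1+\delta$. For $l=n_i$ one has $\max J_{i,n_i}=m_i$, which may be strictly less than $\max\tilde I^{(i)}_{n_i}$; in the general case I split off the single term $\alpha_{i,n_i}e_{m_i}$, whose $E$-norm is $\alpha_{i,n_i}\leqslant 1+\delta$ (again by $1$-unconditionality of the cover sum), giving $\|u_i\|_E\leqslant 2(1+\delta)$. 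When $E$ has property $R$, since $\max J_{i,l}\leqslant\max\tilde I^{(i)}_l$ for every $l$, I may replace the basis indices monotonically and obtain $\|u_i\|_E\leqslant 1+\delta$ directly, with no split. Finally, unimodular invariance in $E$ gives $\|\sum_{i,l}|a_i|\alpha_{i,l}e_{\max J_{i,l}}\|_E=\|\sum_i a_i u_i\|_E$, and dividing by the common bound $C\in\{1+\delta,\,2(1+\delta)\}$ on $\|u_i\|_E$ makes each $u_i/C$ a legal test vector for $\langle\cdot\rangle_{E,\textsf{E},B_{E^*},M}$, yielding $\|\sum_i a_i z_i\|_Y\leqslant C\,\langle\sum_i a_i e_i\rangle_{E,\textsf{E},B_{E^*},M}$. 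Letting $\delta\to 0$ and combining with the first reduction delivers the stated inequality, with the factor $2$ in general and no factor under property $R$.
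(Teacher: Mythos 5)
Your argument is correct and follows essentially the same route as the paper's proof: reduce via Remark \ref{ily} to block vectors $z_i$ with $[z_i]_{X^E_\wedge(\textsf{F})}\leqslant 1$, take near-optimal interval decompositions for each $z_i$, intersect with the blocks $(m_{i-1},m_i]$ (where only the last, truncated interval causes trouble, handled by splitting off one term for the factor $2$ or by property $R$ to avoid it), then concatenate the partitions and test the resulting vectors $u_i$ against $\langle\cdot\rangle_{E,\textsf{E},B_{E^*},M}$. The only differences are expository (you spell out the convexity approximation and the $\delta$-slack in the infimum, which the paper glosses), so nothing further is needed.
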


\begin{proof}  Using Remark \ref{ily}, it is sufficient to show that for any $(a_i)_{i=1}^\infty \in c_{00}$, any infinite subset $M$ of $\nn$, and any $(y_i)_{i=1}^\infty \in\prod_{i=1}^\infty [F_j: m_{i-1}<j\leqslant m_i]$ with $[y_i]_{X_\wedge}\leqslant 1$ for all $i\in\nn$, $$\|\sum_{i=1}^\infty a_i y_i\|_{X_\wedge}\leqslant 2\langle \sum_{i=1}^\infty a_i e_i\rangle_{E, \textsf{E}, B_{E^*}, M},$$   and that if $E$ has property $R$, the same estimate holds without the factor of $2$.

First suppose that $0\leqslant m<n$ and $0\neq y\in [F_j: m<j\leqslant n]$ is such that  $[y]_{X_\wedge}\leqslant 1$.  Then there exists a sequence $(I_i)_{i=1}^\infty$ of intervals with $I_1<I_2<\ldots$ and $\cup_{i=1}^\infty I_i=\nn$ such that $$\|\sum_{i=1}^\infty \|I_iy\|_X e_{\max I_i}\|_E\leqslant 1.$$   Let $k=\min\{i: I_iy\neq 0\}$,  $l=\max \{i: I_iy\neq 0\}$, and let $J=(m,n]$.   Note that for each $k\leqslant i<l$, $\max (J\cap I_i)=\max I_i$ and  $\max (J\cap I_l)\leqslant \max I_l$. Also, for each $i\in\nn$, $(J\cap I_i)y=I_i y$.      Furthermore, by $1$-unconditionality, \begin{align*} \|\sum_{i=1}^\infty \|I_i y\|_X e_{\max I_i}\|_E & = \|\sum_{i=k}^l \|I_i y\|_X e_{\max I_i} \|_E \leqslant \|\sum_{i=k}^{l-1} \|(J\cap I_i)y\|_X e_{\max (J\cap I_i)}\|+\|(J\cap I_l)y\|_X  \leqslant 2.\end{align*} Now if $E$ has property $R$, then \begin{align*} 1 & \geqslant \|\sum_{i=1}^\infty \|I_i y\|_X e_{\max I_i}\|_E  = \|\sum_{i=k}^l \|I_i y\|_X e_{\max I_i} \|_E \\ & \geqslant \|\sum_{i=k}^{l-1} \| (J\cap I_i)y\|_X e_{\max (J\cap I_i)} + \|(J\cap I_l)y\|_Xe_{\max(J\cap I_l)}\|.\end{align*} To summarize, if $y\in [F_j: m<j\leqslant n]$ has $[y]_{X_\wedge}\leqslant 1$, then there exist $p\in \nn$,  intervals $J_1<\ldots <J_p$ such that $\cup_{i=1}^p J_i=(m,n]$, and $v\in 2B_E\cap [e_j:m<j\leqslant n]$ such that $v=\sum_{i=1}^p \|J_i y\|_X e_{\max J_i}$, and if $E$ has property $R$, the factor of $2$ can be omitted.

Now suppose that $0=m_0<m_1<\ldots$, $y_i\in [F_j: m_{i-1}<j\leqslant m_i]$, and $[y_i]_{X_\wedge}\leqslant 1$ for all $i\in\nn$.  Applying the previous paragraph to each $y_i$ and concatenating the resulting sequences of intervals yields the existence of some sequence $I_1<I_2<\ldots$ with $\cup_{i=1}^\infty I_i=\nn$, $(v_i)_{i=1}^\infty \subset 2B_E$, and $0=k_0<k_1<\ldots$ such that for each $n\in\nn$, $(m_{n-1},m_n]=\cup_{i=k_{n-1}+1}^{k_n} I_i$ and $$v_n=\sum_{i=k_{n-1}+1}^{k_n} \|I_i y_n\|_X e_{\max I_i}.$$   Now for any $(a_i)_{i=1}^\infty \in c_{00}$, $$\|\sum_{i=1}^\infty a_iy_i\|_{X^E_\wedge(\textsf{F})} \leqslant \|\sum_{i=1}^\infty \|I_i^\textsf{F}\sum_{j=1}^\infty a_j y_j\|_X e_{\max I_i}\|_E =  \|\sum_{i=1}^\infty a_iv_i\|_E \leqslant 2\Bigl\langle \sum_{i=1}^\infty a_i e_i\Bigr\rangle_{E, \textsf{E}, B_{E^*}, M}.$$   If $E$ has property $R$, we can omit the factor of $2$.

\end{proof}

\section{Combinatorics}

Througout, we let $2^\nn$ denote the power set of $\nn$ and topologize this set with the Cantor topology. Given a subset $M$ of $\nn$, we  let $[M]$ (resp. $[M]^{<\nn}$) denote set of infinite (resp. finite) subsets of $M$.  For convenience, we often write subsets of $\nn$ as sequences, where a set $E$ is identified with the (possibly empty) sequence obtained by listing the members of $E$ in strictly increasing order.  Henceforth, if we write $(m_i)_{i=1}^r\in [\nn]^{<\nn}$ (resp. $(m_i)_{i=1}^\infty\in [\nn]$), it will be assumed that $m_1<\ldots <m_r$ (resp. $m_1<m_2<\ldots$).  Given $M=(m_n)_{n=1}^\infty\in [\nn]$ and $\mathcal{F}\subset [\nn]^{<\nn}$, we define $$\mathcal{F}(M)=\{(m_n)_{n\in E}: E\in \mathcal{F}\}$$ and $$\mathcal{F}(M^{-1})=\{E: (m_n)_{n\in E}\in \mathcal{F}\}.$$  

Given $(m_i)_{i=1}^r, (n_i)_{i=1}^r\in [\nn]^{<\nn}$, we say $(n_i)_{i=1}^r$ is a \emph{spread} of $(m_i)_{i=1}^r$ if $m_i\leqslant n_i$ for each $1\leqslant i\leqslant r$.   We agree that $\varnothing$ is a spread of $\varnothing$.    We write $E\preceq F$ if either $E=\varnothing$ or $E=(m_i)_{i=1}^r$ and $F=(m_i)_{i=1}^s$ for some $r\leqslant s$.   In this case, we say $E$ is an \emph{initial segment} of $F$.  For $E,F\subset \nn$, we write $E<F$ to mean that either $E=\varnothing$, $F=\varnothing$, or $\max E<\min F$. Given $n\in \nn$ and $E\subset \nn$, we write $n\leqslant E$ (resp. $n<E$) to mean that $n\leqslant \min E$ (resp. $n<\min E$).

We say $\mathcal{G}\subset [\nn]^{<\nn}$ is \begin{enumerate}[(i)]\item \emph{compact} if it is compact in the Cantor topology, \item \emph{hereditary} if $E\subset F\in \mathcal{G}$ implies $E\in \mathcal{G}$,  \item \emph{spreading} if whenever $E\in \mathcal{G}$ and $F$ is a spread of $E$, $F\in \mathcal{G}$,  \item \emph{regular} if it is compact, hereditary, and spreading. \end{enumerate}

Given a topological space $K$ and a subset $L$ of $K$, $L'$ denotes the \emph{Cantor Bendixson derivative} of $L$, which consists of those members of $L$ which are not relatively isolated in $L$.    We define by transfinite induction the higher order transfinite derivatives of $L$ by $$L^0=L,$$ $$L^{\xi+1}=(L^\xi)',$$ and if $\xi$ is a limit ordinal, $$L^\xi=\bigcap_{\zeta<\xi}L^\zeta.$$ We recall that $K$ is said to be \emph{scattered} if there exists an ordinal $\xi$ such that $K^\xi=\varnothing$. In this case, we define the \emph{Cantor Bendixson index} of $K$ by $CB(K)=\min\{\xi: K^\xi=\varnothing\}$.    If $K^\xi\neq \varnothing$ for all ordinals $\xi$, we write $CB(K)=\infty$.   We agree to the convention that $\xi<\infty$ for all ordinals $\xi$, and therefore $CB(K)<\infty$ simply means that $CB(K)$ is an ordinal, and $K$ is scattered. 

Of course, if $\xi$ is a limit ordinal, $K$ is a compact  topological space, and $K^\zeta\neq \varnothing$ for all $\zeta<\xi$, then $(K^\zeta)_{\zeta<\xi}$ is a collection of compact subsets of $K$ with the finite intersection property, so $K^\xi=\cap_{\zeta<\xi}K^\zeta\neq \varnothing$.   From this it follows that for a compact topological space, $CB(K)$ cannot be a limit ordinal.

We recall the following, which is well known.  The proof is standard, so we omit it.

\begin{fact} Let $\mathcal{G}\subset [\nn]^{<\nn}$ be hereditary.   The following are equivalent.  \begin{enumerate}[(i)]\item There does not exist $M\in [\nn]$ such that $[M]^{<\nn}\subset \mathcal{G}$.\item $\mathcal{G}$ is compact.  \item $CB(\mathcal{G})<\infty$. \item $CB(\mathcal{G})<\omega_1$. \end{enumerate}

\end{fact}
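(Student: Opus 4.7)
The plan is to establish the cycle $(i) \Rightarrow (ii) \Rightarrow (iii) \Rightarrow (i)$ together with the equivalence $(iii) \Leftrightarrow (iv)$, using throughout that $\mathcal{G}$ is a subset of the compact, second countable, \emph{countable} space $[\nn]^{<\nn} \subset 2^\nn$.

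For $(i) \Rightarrow (ii)$, I would argue the contrapositive. As $2^\nn$ is compact, $\mathcal{G}$ fails to be compact iff it is not closed in $2^\nn$, so choose $A \in \overline{\mathcal{G}} \setminus \mathcal{G}$ and a sequence $(E_n) \subset \mathcal{G}$ with $E_n \to A$. Convergence in $2^\nn$ means the initial segments $E_n \cap \{1,\dots,k\}$ eventually agree with $A \cap \{1,\dots,k\}$ for each $k$; if $A$ were finite this would force $E_n = A$ for large $n$, contradicting $A \notin \mathcal{G}$. Hence $A = \{a_1 < a_2 < \cdots\}$ is infinite, and for each $k$ some $E_n$ contains $\{a_1,\dots,a_k\}$. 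Hereditariness places $\{a_1,\dots,a_k\}$ in $\mathcal{G}$, so $[A]^{<\nn} \subseteq \mathcal{G}$, refuting $(i)$.

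For $(ii) \Rightarrow (iii)$, observe that $\mathcal{G}$ is a countable compact metric space; were it not scattered, its perfect kernel $\bigcap_\xi \mathcal{G}^\xi$ would be a nonempty compact metric space without isolated points, hence uncountable by the classical Cantor--Bendixson theorem---a contradiction. For $(iii) \Rightarrow (i)$, I would argue the contrapositive by transfinite induction: assuming $[M]^{<\nn} \subseteq \mathcal{G}$, I show $[M]^{<\nn} \subseteq \mathcal{G}^\xi$ for all $\xi$. The base and limit cases are immediate from the definitions. At a successor step, given $E \in [M]^{<\nn}$ and any basic open neighborhood $U$ of $E$ in $2^\nn$ determined by $\{1,\dots,k\}$, any $m \in M$ with $m > \max(E \cup \{k\})$ yields $E \cup \{m\} \in U \cap [M]^{<\nn} \subseteq U \cap \mathcal{G}^\xi$ by the inductive hypothesis, so $E$ is not isolated in $\mathcal{G}^\xi$, i.e.\ $E \in \mathcal{G}^{\xi+1}$.

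The equivalence $(iii) \Leftrightarrow (iv)$ is handled by the trivial $(iv) \Rightarrow (iii)$ together with a second countability argument for the reverse: fixing a countable base $(U_n)$ of $2^\nn$, each strict drop $\mathcal{G}^{\xi+1} \subsetneq \mathcal{G}^\xi$ produces an isolated point of $\mathcal{G}^\xi$ witnessed by some $U_{n(\xi)}$, and distinct successor steps must produce distinct indices $n(\xi)$, so only countably many such steps can occur. The proof is essentially routine; the only point deserving a moment of care is the successor step in $(iii) \Rightarrow (i)$, where one must remember that non-isolation inside the smaller set $[M]^{<\nn}$ automatically passes to non-isolation inside the larger set $\mathcal{G}^\xi$, which is what keeps the induction going.
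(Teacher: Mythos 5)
Your proof is correct in structure and substance, following exactly the standard route the paper has in mind when it omits the argument: contrapositives for $(i)\Leftrightarrow(ii)$ and $(iii)\Rightarrow(i)$, the Cantor--Bendixson theorem for $(ii)\Rightarrow(iii)$, and a second-countability argument for $(iii)\Leftrightarrow(iv)$. One small but genuine error: in the finite-$A$ case of your $(i)\Rightarrow(ii)$ contrapositive, the claim that convergence $E_n\to A$ in $2^\nn$ with $A$ finite ``forces $E_n=A$ for large $n$'' is false --- for instance $E_n=A\cup\{n\}\to A$ but $E_n\neq A$ for all $n$. What convergence actually gives you (taking $k=\max A$) is that $A\subseteq E_n$ for all large $n$, and then hereditariness --- the same mechanism you invoke in the infinite-$A$ case --- puts $A\in\mathcal{G}$, contradicting $A\notin\mathcal{G}$. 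So the conclusion you want is correct and the finite case is genuinely impossible, but the justification needs hereditariness rather than the unwarranted identity $E_n=A$. Everything else, including the observation you single out at the end (that non-isolation within $[M]^{<\nn}\subseteq\mathcal{G}^\xi$ transfers to non-isolation within $\mathcal{G}^\xi$), is sound.
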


For each $n\in\nn$, we let $\mathcal{A}_n=\{E\in [\nn]^{<\nn}: |E|\leqslant n\}$.    It is clear that $\mathcal{A}_n$ is regular.    Also of importance are the Schreier families, $(\mathcal{S}_\xi)_{\xi<\omega_1}$.  We recall these families.  We let $$\mathcal{S}_0=\mathcal{A}_1,$$ $$\mathcal{S}_{\xi+1}=\{\varnothing\} \cup \Bigl\{\bigcup_{i=1}^n E_i: \varnothing\neq E_i \in \mathcal{S}_\xi, n\leqslant E_1<\ldots <E_n\Bigr\},$$ and if $\xi<\omega_1$ is a limit ordinal, there exists a sequence $\xi_n\uparrow \xi$ such that $$\mathcal{S}_\xi=\{E\in [\nn]^{<\nn}: \exists n\leqslant E\in \mathcal{S}_{\xi_n+1}\}.$$  We note that the sequence $(\xi_n)_{n=1}^\infty$ has the property that for any $n\in\nn$, $\mathcal{S}_{\xi_n+1}\subset \mathcal{S}_{\xi_{n+1}}$.    The existence of such families with the last indicated property is discussed, for example, in \cite{Concerning}.  

Given two non-empty regular families $\mathcal{F}, \mathcal{G}$, we let $$\mathcal{F}[\mathcal{G}]=\{\varnothing\}\cup \Bigl\{\bigcup_{i=1}^n E_i: \varnothing \neq E_i\in \mathcal{G}, E_1<\ldots <E_n, (\min E_i)_{i=1}^n\in \mathcal{F}\Bigr\}.$$  We let $\mathcal{F}[\mathcal{G}]=\varnothing$ if either $\mathcal{F}=\varnothing$ or $\mathcal{G}=\varnothing$. 

Given a regular family $\mathcal{G}$, we let $MAX(\mathcal{G})$ denote the set of maximal members of $\mathcal{G}$ with respect to inclusion (noting that this is also the set of maximal members of $\mathcal{G}$ with respect to the initial segment ordering).  We note that for each $\xi<\omega_1$ and any $\varnothing\neq E\in \mathcal{S}_\xi$, either $E\in MAX(\mathcal{S}_\xi)$ or $E\cup (1+\max E)\in \mathcal{S}_\xi$.   From this it follows that for any $M=(m_i)_{i=1}^\infty\in [\nn]$, there exist unique $0=k_0<k_1<\ldots$ such that for each $i\in\nn$, $(m_j)_{j=k_{i-1}+1}^{k_i}\in MAX(\mathcal{S}_\xi)$.  We define $M_{\mathcal{S}_\xi}=(m_j)_{j=1}^{k_1}$, and $M_{\mathcal{S}_\xi, i}=(m_j)_{j=k_{i-1}+1}^{k_i}$.

The following facts are collected in \cite{Concerning}.

\begin{proposition} \begin{enumerate}[(i)]\item For any non-empty regular families $\mathcal{F}, \mathcal{G}$, $\mathcal{F}[\mathcal{G}]$ is regular. Furthermore, if $CB(\mathcal{F})=\beta+1$ and $CB(\mathcal{G})=\alpha+1$, then $CB(\mathcal{F}[\mathcal{G}])=\alpha\beta+1$. \item For any $n\in\nn$, $CB(\mathcal{A}_n)=n+1$. \item For any $\xi<\omega_1$, $CB(\mathcal{S}_\xi)=\omega^\xi+1$. \item If $\mathcal{F}$ is regular and $M\in [\nn]$, then $\mathcal{F}(M^{-1})$ is regular and $CB(\mathcal{F})=CB(\mathcal{F}(M^{-1}))$. \item For regular families $\mathcal{F}, \mathcal{G}$, there exists $M\in [\nn]$ such that $\mathcal{F}(M)\subset \mathcal{G}$ if and only if there exists $M\in [\nn]$ such that $\mathcal{F}\subset \mathcal{G}(M^{-1})$ if and only if $CB(\mathcal{F})\leqslant CB(\mathcal{G})$. \end{enumerate} 

\label{deep facts}

\end{proposition}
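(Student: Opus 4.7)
The plan is to prove the five parts in the order stated, since each leans on its predecessors, with transfinite induction on Cantor--Bendixson derivatives as the main tool. Part $(ii)$ is direct: the isolated points of $\mathcal{A}_n$ are the sets of cardinality exactly $n$, since any set of smaller size admits infinitely many extensions inside $\mathcal{A}_n$; hence $\mathcal{A}_n'=\mathcal{A}_{n-1}$ and $CB(\mathcal{A}_n)=n+1$ by induction.

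For part $(i)$, regularity of $\mathcal{F}[\mathcal{G}]$ is immediate from the definitions: given $F=\bigcup_{i=1}^n F_i\in \mathcal{F}[\mathcal{G}]$ and $E\subseteq F$, each $E\cap F_i$ still lies in $\mathcal{G}$ by heredity, and the minima of the nonempty pieces form a spread of a subset of $(\min F_i)_{i=1}^n\in\mathcal{F}$; spreading is analogous; compactness follows since the existence of $M\in[\nn]$ with $[M]^{<\nn}\subseteq \mathcal{F}[\mathcal{G}]$ would yield arbitrarily long $\mathcal{F}$-admissible sequences inside $[M]^{<\nn}$, contradicting compactness of $\mathcal{F}$. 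The index identity $CB(\mathcal{F}[\mathcal{G}])=\alpha\beta+1$ is the technical core; I would prove by transfinite induction on $\gamma\leqslant \beta$ that iterating the derivative $\alpha\gamma$ times peels off successive $\mathcal{G}$-layers, so that when $\gamma=\beta$ a final derivative of order $\alpha$ empties everything. Part $(iii)$ then follows by induction on $\xi$ from $\mathcal{S}_0=\mathcal{A}_1$: the identity $\mathcal{S}_{\xi+1}=\mathcal{S}_1[\mathcal{S}_\xi]$ (since the constraint $(\min E_i)_{i=1}^n\in\mathcal{S}_1$ is precisely $n\leqslant \min E_1$) combined with $(i)$ yields $CB(\mathcal{S}_{\xi+1})=\omega^\xi\cdot\omega+1=\omega^{\xi+1}+1$; the limit case uses that the stratification $\mathcal{S}_\xi=\bigcup_n\{E:n\leqslant E,\,E\in\mathcal{S}_{\xi_n+1}\}$ forces $CB(\mathcal{S}_\xi)\geqslant\omega^\xi$ and, through an upper-bound derivative count, equals $\omega^\xi+1$. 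Part $(iv)$ is formal: the order isomorphism $n\mapsto m_n$ induces a homeomorphism of $\mathcal{F}(M^{-1})$ onto a subset of $2^\nn$ that preserves the regular structure and commutes with the derivative operation.

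The main obstacle is $(v)$. The two existential statements are equivalent by $(iv)$ essentially syntactically, and the implication $\exists M:\mathcal{F}(M)\subseteq\mathcal{G}\Rightarrow CB(\mathcal{F})\leqslant CB(\mathcal{G})$ is immediate from $(iv)$ together with monotonicity of $CB$ on hereditary families. The converse is the technical heart: I would induct on $CB(\mathcal{F})$, applying an infinite Ramsey/Nash--Williams theorem to the indicator coloring of $\mathcal{G}$ on $[\nn]^{<\nn}$ to thin $\nn$ down to some $N$ on which $\mathcal{G}$ is combinatorially homogeneous, compare derivative ranks level by level on $N$, and extract $M\subseteq N$ by a further thinning that matches each CB-stratum of $\mathcal{F}(M)$ into the corresponding stratum of $\mathcal{G}$. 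The delicate point is tracking derivative ranks through successive Ramsey-theoretic thinnings, which is the subject of the detailed treatment in \cite{Concerning}.
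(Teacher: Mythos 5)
You should first note that the paper does not prove this proposition at all: it is recorded as a list of known facts imported from \cite{Concerning}, with the related dichotomy of Gasparis quoted separately as Theorem \ref{gasp}. So the only question is whether your outline would actually close into a proof, and in several places it would not yet. The parts that are genuinely fine are (ii), the regularity (heredity/spreading) claims in (i) and (iv), the identification $\mathcal{S}_{\xi+1}=\mathcal{S}_1[\mathcal{S}_\xi]$, and the easy directions of (v) (where, incidentally, the equivalence of the two existential statements is purely definitional and does not need (iv)).

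The concrete gaps are these. First, your compactness argument for $\mathcal{F}[\mathcal{G}]$ is not valid as stated: ``arbitrarily long $\mathcal{F}$-admissible sequences'' do not contradict compactness of $\mathcal{F}$, since compact regular families such as $\mathcal{S}_1$ contain arbitrarily long sets. From $[M]^{<\nn}\subseteq \mathcal{F}[\mathcal{G}]$ one must run a K\"{o}nig-type diagonalization over decompositions of the initial segments of $M$ to produce either an infinite set all of whose finite subsets lie in $\mathcal{G}$, or infinitely many successive blocks in $\mathcal{G}$ whose minima form an infinite set all of whose finite subsets lie in $\mathcal{F}$; note the contradiction may come from $\mathcal{G}$ rather than $\mathcal{F}$. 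Second, the quantitative core is only promised, not proved: the identity $CB(\mathcal{F}[\mathcal{G}])=\alpha\beta+1$ (``peeling off $\mathcal{G}$-layers'' needs a precise inductive description of $(\mathcal{F}[\mathcal{G}])^{\gamma}$, as in the derivative computation the paper does carry out inside Lemma \ref{ww}), and the limit case of (iii). Third, in (iv) the homeomorphism $E\mapsto M(E)$ only yields $CB(\mathcal{F}(M^{-1}))=CB(\{F\in\mathcal{F}: F\subseteq M\})$; the actual content, that restricting a spreading family to $[M]^{<\nn}$ does not lower the Cantor--Bendixson index, is exactly the point your sketch skips, and it is also what your lower bounds in (iii) and your use of homogenization in (v) rely on. Fourth, in (v) your Ramsey plan essentially re-derives Gasparis' dichotomy \cite{Ga}, but the rank-comparison consequence of that dichotomy handles only $CB(\mathcal{F})<CB(\mathcal{G})$; in the equality case the unfavorable branch $\mathcal{G}\cap[N]^{<\nn}\subseteq\mathcal{F}$ gives no information toward $\mathcal{F}(M)\subseteq\mathcal{G}$, so an extra argument is required, and the upgrade from $\mathcal{F}\cap[N]^{<\nn}\subseteq\mathcal{G}$ to $\mathcal{F}(N)\subseteq\mathcal{G}$ should be made explicit (it uses that $N(E)$ is a spread of $E$, hence lies in $\mathcal{F}\cap[N]^{<\nn}$). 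As it stands, the proposal is a reasonable map of the standard arguments in \cite{Concerning}, but the steps listed above are precisely where the work lies.
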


For a probability measure $\mathbb{P}$ on $\nn$, we write $\mathbb{P}(n)$ to mean $\mathbb{P}(\{n\})$.   Furthermore, we let $\text{supp}(\mathbb{P})=\{n\in\nn: \mathbb{P}(n)>0\}$.  We will recall the repeated averages hierarchy, introduced in \cite{AMT}.  For each countable ordinal $\xi$,  we will define a collection $\mathfrak{S}_\xi=\{\mathbb{S}^\xi_{M,n}: M\in [\nn], n\in\nn\}$ of probability measures on $\nn$.    If $M=(m_i)_{i=1}^\infty$, we let $\mathbb{S}^0_{M,n}=\delta_{m_n}$, the Dirac measure at $m_n$.    If $\mathfrak{S}_\xi$ has been defined and $M\in[\nn]$, we let $M_1=M$, $p_0=s_0=0$, $p_1=\min M_1$.    Now assume that $M_1, \ldots, M_{n-1}$, $s_0, \ldots, s_{n-1}$, $ \ldots, p_{n-1}$, and $\mathbb{S}^{\xi+1}_{M, 1}, \ldots, \mathbb{S}^{\xi+1}_{M,n-1}$ have been defined such that $\mathbb{S}^\xi_{M,i}=p_i^{-1}\sum_{j=s_{i-1}+1}^{s_i}  \mathbb{S}^\xi_{M,j}$ and $s_i=s_{i-1}+p_i$.  Let $M_n=M\setminus \cup_{j=1}^{s_{n-1}} \text{supp}(\mathbb{S}^\xi_{M,j})$, $p_n=\min M_n$, $s_n= s_{n-1}+p_n$, and $\mathbb{S}^{\xi+1}_{M, n}=\sum_{j=s_{n-1}+1}^{s_n} \mathbb{S}^\xi_{M,j}$.  Now assume that $\xi$ is a countable limit ordinal and $\mathfrak{S}_\zeta$ has been defined for each $\zeta<\xi$.   Let $(\xi_n)_{n=1}^\infty$ be the sequence such that $$\mathcal{S}_\xi=\{\varnothing\}\cup \{E\in [\nn]^{<\nn}: \varnothing\neq E\in \mathcal{S}_{\xi_{\min E}+1}\}.$$     Then let $M_1=M$, $p_1=\min M_1$, and $\mathbb{S}^\xi_{M,1}=\mathbb{S}^{\xi_{p_1}+1}_{M_1, 1}$.   Now assuming that $M_1, \ldots, M_{n-1}$, $p_1, \ldots, p_{n-1}$, and $\mathbb{S}^\xi_{M,1}, \ldots, \mathbb{S}^\xi_{M,n-1}$ have been defined, let $M_n=M\setminus \cup_{i=1}^{n-1}\text{supp}(\mathbb{S}^\xi_{M,i})$, $p_n=\min M_n$, and $\mathbb{S}^\xi_{M,n}= \mathbb{S}^{\xi_{p_n}+1}_{M_n, 1}$.

We isolate the following properties of the collections $\mathfrak{S}_\xi$, shown in \cite{AMT}. 

\begin{proposition}\begin{enumerate}[(i)]\item For each ordinal $\xi$, each $M\in[\nn]$, and each $n\in\nn$, $\text{\emph{supp}}(\mathbb{S}^\xi_{M,n})=M_{\mathcal{S}_\xi}$. \item If $M,N\in[\nn]$ and $r_1<\ldots <r_k$ are such that $\cup_{i=1}^k \text{\emph{supp}}(\mathbb{S}^\xi_{M,r_i})$ is an initial segment of $N$, then $\mathbb{S}^\xi_{N,i}=\mathbb{S}^\xi_{M,r_i}$ for each $1\leqslant i\leqslant k$. \end{enumerate}

\end{proposition}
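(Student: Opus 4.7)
The plan is to establish both parts by transfinite induction on $\xi$, exploiting the parallel recursive structure of the Schreier hierarchy $(\mathcal{S}_\xi)$ and the repeated averages hierarchy $(\mathfrak{S}_\xi)$. Throughout, I will use the decomposition $M = M_{\mathcal{S}_\xi, 1} \cup M_{\mathcal{S}_\xi, 2} \cup \ldots$ into consecutive maximal blocks of $\mathcal{S}_\xi$ supplied just before the statement, together with the defining recursions for $\mathbb{S}^\xi_{M,n}$.

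For part (i), the base case $\xi = 0$ is immediate: $\mathbb{S}^0_{M,n} = \delta_{m_n}$ has support $\{m_n\}$, and indeed $\mathcal{S}_0 = \mathcal{A}_1$ makes $M_{\mathcal{S}_0, n} = \{m_n\}$ (reading the target as $M_{\mathcal{S}_\xi, n}$, which the displayed formula must mean). For the successor step, I would fix $M$ and observe that by the inductive hypothesis, $\mathbb{S}^\xi_{M,j}$ has support $M_{\mathcal{S}_\xi, j}$, and these supports are consecutive in $M$. From the definition of $\mathbb{S}^{\xi+1}_{M,n} = p_n^{-1}\sum_{j=s_{n-1}+1}^{s_n} \mathbb{S}^\xi_{M,j}$, its support is the concatenation $\bigcup_{j=s_{n-1}+1}^{s_n} M_{\mathcal{S}_\xi, j}$. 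One then checks that this block has precisely $p_n = \min M_n$ many $\mathcal{S}_\xi$-maximal pieces starting after the previous concatenation, which by the definition $\mathcal{S}_{\xi+1} = \{\bigcup E_i : n \leqslant E_1 < \ldots < E_n,\ E_i \in \mathcal{S}_\xi\}$ and the characterization of $MAX(\mathcal{S}_{\xi+1})$ is exactly an $\mathcal{S}_{\xi+1}$-maximal initial segment of $M_n$, hence exactly $M_{\mathcal{S}_{\xi+1}, n}$. The limit case is handled similarly: the recursion $\mathbb{S}^\xi_{M,n} = \mathbb{S}^{\xi_{p_n}+1}_{M_n, 1}$ together with the inductive hypothesis for $\xi_{p_n}+1$ shows that $\text{supp}(\mathbb{S}^\xi_{M,n}) = (M_n)_{\mathcal{S}_{\xi_{p_n}+1}, 1}$, which matches $M_{\mathcal{S}_\xi, n}$ by the defining relation $\mathcal{S}_\xi = \{E : \exists n \leqslant E \in \mathcal{S}_{\xi_n+1}\}$ applied to $\min M_n = p_n$.

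For part (ii), the key observation is that the recursion constructing $\mathbb{S}^\xi_{M,i}$ depends only on the data seen so far, namely the already-produced measures together with the sets $M_j = M \setminus \bigcup_{i<j} \text{supp}(\mathbb{S}^\xi_{M,i})$. I would therefore argue by induction on $k$ (simultaneously with $i \leqslant k$) that if $\bigcup_{i=1}^k \text{supp}(\mathbb{S}^\xi_{M, r_i})$ is an initial segment of $N$, then $N_i$ (the set used in constructing $\mathbb{S}^\xi_{N,i}$) agrees with $M_{r_i}$ on the relevant initial segment — in particular $\min N_i = \min M_{r_i} = p_{r_i}$. Part (i) is crucial here because it pins down which elements of $M$ are consumed by each $\mathbb{S}^\xi_{M, r_i}$, ensuring that the bookkeeping of ``remaining'' indices lines up between the two constructions. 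Appealing again to part (i) in the relevant successor or limit subordinate index then yields equality of the full measures.

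The main obstacle is keeping the bookkeeping transparent in the limit step: one must check that the sequence of subordinate ordinals $(\xi_{p_n})$ chosen along $M$ is matched by the sequence chosen along $N$ on the initial segment, which reduces (via part (i)) to the fact that both constructions read the same first $\min M_{r_1}$ coordinates of $M$ to select $\xi_{p_{r_1}}$ for the very first step of $N$. Once this alignment is secured, the inductive step is mechanical. I expect the whole argument to be roughly parallel to the original proof in \cite{AMT} and to take only a few lines at each case once notation is set.
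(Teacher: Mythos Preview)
The paper does not actually prove this proposition: it is quoted from \cite{AMT} with the remark ``shown in \cite{AMT}'' and no argument given. Your transfinite induction sketch is the standard route and is essentially the argument in \cite{AMT}; you also correctly spot that the displayed identity in (i) should read $\text{supp}(\mathbb{S}^\xi_{M,n})=M_{\mathcal{S}_\xi,n}$ rather than $M_{\mathcal{S}_\xi}$. One small point worth tightening in your write-up of (ii): the indices $r_1<\ldots<r_k$ need not be consecutive in $M$, so your alignment ``$N_i$ agrees with $M_{r_i}$ on the relevant initial segment'' really rests on first proving (as a consequence of (i)) that $\mathbb{S}^\xi_{M,r}$ depends only on the tail $M_r$ and not on the history, i.e.\ $\mathbb{S}^\xi_{M,r}=\mathbb{S}^\xi_{M_r,1}$; once that reduction is made explicit, the bookkeeping you describe goes through cleanly.
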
 

The second property above is called the \emph{permanence property}.   

Let us recall the following result of Gasparis.

\begin{theorem}\cite{Ga} If $\mathcal{F}, \mathcal{G}\subset [\nn]^{<\nn}$ are hereditary, then for any $M\in [\nn]$, there exists $N\in [M]$ such that either $$\mathcal{F}\cap [N]^{<\nn}\subset \mathcal{G}\text{\ \ \ \ \ or\ \ \ \ \ }\mathcal{G}\cap [N]^{<\nn}\subset \mathcal{F}.$$  

In particular, if $\mathcal{G}$ is regular and $CB(\mathcal{F})<CB(\mathcal{G})$, then for any $M\in [\nn]$, there exists $N\in [M]$ such that $\mathcal{F}\cap [N]^{<\nn}\subset \mathcal{G}$. 
\label{gasp}
\end{theorem}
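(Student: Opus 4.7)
The plan is to invoke a Ramsey-type dichotomy (Nash--Williams/Galvin) twice, once for each of the difference families $\mathcal{F}\setminus\mathcal{G}$ and $\mathcal{G}\setminus\mathcal{F}$, and then exploit heredity of the two families to rule out the case in which neither application immediately produces the desired containment. The dichotomy I will use is standard: for any $\mathcal{A}\subset[\nn]^{<\nn}$ and any $M\in[\nn]$, there exists $N\in[M]$ such that either (I) every $L\in[N]$ has an initial segment in $\mathcal{A}$, or (II) $[N]^{<\nn}\cap\mathcal{A}=\varnothing$.

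First apply the dichotomy to $\mathcal{A}_1=\mathcal{F}\setminus\mathcal{G}$ and $M$, obtaining $N_1\in[M]$. If (II) holds, then $\mathcal{F}\cap[N_1]^{<\nn}\subset\mathcal{G}$ and we are done with $N=N_1$. Otherwise (I) holds for $\mathcal{A}_1$ on $N_1$, and we apply the dichotomy again to $\mathcal{A}_2=\mathcal{G}\setminus\mathcal{F}$ and $N_1$, obtaining $N_2\in[N_1]$. If (II) holds here, we are done with $N=N_2$. In the remaining case, for every $L\in[N_2]$ there exist initial segments $s_1,s_2\preceq L$ with $s_1\in\mathcal{F}\setminus\mathcal{G}$ and $s_2\in\mathcal{G}\setminus\mathcal{F}$. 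Since any two initial segments of a single set are nested, and $s_1=s_2$ is impossible (as their memberships conflict), we may assume $s_1\subsetneq s_2$; then $s_1\subset s_2\in\mathcal{G}$ combined with heredity of $\mathcal{G}$ yields $s_1\in\mathcal{G}$, contradicting $s_1\in\mathcal{F}\setminus\mathcal{G}$. The symmetric subcase uses heredity of $\mathcal{F}$, so this case cannot occur.

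For the ``in particular'' statement, apply the first part and rule out the containment $\mathcal{G}\cap[N]^{<\nn}\subset\mathcal{F}$ using the Cantor--Bendixson index. Writing $N=(n_j)_{j=1}^\infty$, the map $E\mapsto(n_j)_{j\in E}$ is a topological embedding of $\mathcal{G}(N^{-1})$ onto $\mathcal{G}\cap[N]^{<\nn}$ in $2^\nn$, so by Proposition \ref{deep facts}(iv) we have $CB(\mathcal{G}\cap[N]^{<\nn})=CB(\mathcal{G}(N^{-1}))=CB(\mathcal{G})$. Since $CB$ is monotone under subspace inclusion, such a containment would force $CB(\mathcal{F})\geqslant CB(\mathcal{G})$, contradicting $CB(\mathcal{F})<CB(\mathcal{G})$; hence $\mathcal{F}\cap[N]^{<\nn}\subset\mathcal{G}$. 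The only delicate point is the combinatorial contradiction in the ``double-(I)'' branch of the first part, where one must use that any two initial segments of a single infinite set are nested and then exploit heredity on both sides; the rest is a direct application of standard Ramsey-theoretic machinery.
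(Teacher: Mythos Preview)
Your proof is correct. The paper itself does not prove the first statement at all---it simply cites Gasparis \cite{Ga}---and for the ``in particular'' clause it only remarks that the conclusion follows from the identity $CB(\mathcal{G}\cap[N]^{<\nn})=CB(\mathcal{G})$ for regular $\mathcal{G}$, which is exactly the fact you establish via the homeomorphism $E\mapsto(n_j)_{j\in E}$ and Proposition~\ref{deep facts}(iv). So on the second part you match the paper; on the first part you have supplied a self-contained argument where the paper offers none, deriving the dichotomy from the Nash--Williams theorem rather than quoting Gasparis directly.
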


The first statement was proved directly in \cite{Ga}, while the second follows from the fact that for any regular $\mathcal{G}$ and $N\in[\nn]$, $CB(\mathcal{G}\cap [N]^{<\nn})=CB(\mathcal{G})$.

We also will need the following, shown in \cite{CN}.

\begin{proposition} \begin{enumerate}[(i)]\item For any countable ordinal $\xi$, if $\mathcal{H}$ is regular with $CB(\mathcal{H})\leqslant \omega^\xi+1$,  and $q\in \nn$, then for any $\ee>0$ and $M\in[\nn]$, there exists $N\in[M]$ such that $$\sup \{\mathbb{S}^\xi_{P,1}(E): E\in \mathcal{H}, \min E\leqslant q, P\in[N]\}\leqslant \ee.$$ \item  If $\xi<\omega_1$ and if $\mathcal{H}$ is a regular family with $CB(\mathcal{H})\leqslant \omega^\xi$, then for any $\ee>0$ and $M\in[\nn]$, there exists $N\in[M]$ such that $$\sup \{\mathbb{S}^\xi_{P,1}(E): E\in \mathcal{H}, P\in [N]\}\leqslant \ee.$$\end{enumerate}

\label{CN}
\end{proposition}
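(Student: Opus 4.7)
The plan is to prove (i) and (ii) together by transfinite induction on $\xi<\omega_1$, exploiting the recursive construction of $\mathfrak{S}_\xi$, the permanence property, and Gasparis's theorem (Theorem \ref{gasp}) to trade the combinatorial bound on $CB(\mathcal{H})$ for a quantitative bound on the measure.

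For the base case $\xi=0$, since $\mathbb{S}^0_{P,n}=\delta_{p_n}$, a regular family with $CB(\mathcal{H})\leqslant 1$ must be $\{\varnothing\}$ by hereditariness, so (ii) is trivial. For (i), $CB(\mathcal{H})\leqslant 2$ forces $\mathcal{H}\subseteq\{\varnothing\}\cup\{\{n\}:n\in\nn\}$, and taking $N\in [M]$ with $\min N>q$ makes $\mathbb{S}^0_{P,1}(E)=0$ whenever $\min E\leqslant q$.

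For the successor step $\xi=\zeta+1$, I would use that $\mathbb{S}^{\zeta+1}_{P,1}=p_1^{-1}\sum_{j=1}^{p_1}\mathbb{S}^\zeta_{P,j}$ with $p_1=\min P$ and apply the key observation that, by the permanence property, each $\mathbb{S}^\zeta_{P,j}$ equals $\mathbb{S}^\zeta_{Q,1}$ for some $Q\in[N]$ obtained as a tail of $P$ after the earlier supports. Since $CB(\mathcal{H})$ is always a successor and is bounded by $\omega^{\zeta+1}=\sup_k\omega^\zeta\cdot k$, there exists $k\in\nn$ with $CB(\mathcal{H})\leqslant \omega^\zeta\cdot k+1=CB(\mathcal{A}_k[\mathcal{S}_\zeta])$. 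Gasparis's theorem then yields $N_0\in[M]$ with $\mathcal{H}\cap[N_0]^{<\nn}\subset\mathcal{A}_k[\mathcal{S}_\zeta]$, so every $E\in\mathcal{H}\cap[N_0]^{<\nn}$ splits as $E=E_1\cup\cdots\cup E_m$ with $m\leqslant k$ and $E_i\in\mathcal{S}_\zeta$. Applying the inductive hypothesis (i) at level $\zeta$ to the family $\mathcal{S}_\zeta$ (whose $CB$ equals $\omega^\zeta+1$) and to the threshold $q$ used to control $\min E_i$, followed by a choice of $\min N$ large enough so that $1/\min N$ is also small, lets me bound $\mathbb{S}^\zeta_{Q,1}(E_i)\leqslant \ee/k$ uniformly, so the full average $\mathbb{S}^{\zeta+1}_{P,1}(E)$ comes out $\leqslant \ee$.

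For the limit step, I would use $\mathbb{S}^\xi_{P,1}=\mathbb{S}^{\xi_{p_1}+1}_{P,1}$ where $p_1=\min P$ and $\xi_n\uparrow\xi$ is the fixed defining sequence. Since $\omega^{\xi_n+1}\uparrow\omega^\xi$, the hypothesis $CB(\mathcal{H})\leqslant\omega^\xi$ (respectively $\leqslant\omega^\xi+1$) forces $CB(\mathcal{H})\leqslant\omega^{\xi_n+1}$ (respectively $+1$) for all sufficiently large $n$. I would pick such $n$, force $\min N\geqslant n$, and invoke the inductive hypothesis at level $\xi_{p_1}+1<\xi$ applied uniformly to all $P\in[N]$, using monotonicity in the level to handle the dependence of $\xi_{p_1}$ on $P$. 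The main obstacle is the successor step: one has to interleave the combinatorial reduction from Gasparis with the probabilistic reduction from induction on the same subset $N$, all while keeping track of how the permanence property converts each $\mathbb{S}^\zeta_{P,j}$ into a measure to which the induction hypothesis at level $\zeta$ applies.
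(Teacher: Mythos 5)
The paper does not actually prove Proposition \ref{CN}; it quotes it from \cite{CN}, so your attempt can only be compared with the standard argument (going back to \cite{AMT}), which does have the inductive shape you describe. Within that shape there are two genuine gaps. First, your treatment of part (i) never uses the hypothesis $\min E\leqslant q$, and without it the combinatorial reduction fails in exactly the case that (i) exists to cover: in (i) you only know $CB(\mathcal{H})\leqslant \omega^\xi+1$, and when equality holds (e.g.\ $\mathcal{H}=\mathcal{S}_{\zeta+1}$ at a successor stage, or $\mathcal{H}=\mathcal{S}_\xi$ at a limit stage) there is no $k$ with $CB(\mathcal{H})\leqslant \omega^\zeta k+1$ and no $n$ with $CB(\mathcal{H})\leqslant \omega^{\xi_n+1}+1$; Theorem \ref{gasp} then gives the inclusion in the \emph{opposite} direction on every $N$, so your passage to $\mathcal{A}_k[\mathcal{S}_\zeta]$ (or to a lower level at limit $\xi$) is unavailable. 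The restriction $\min E\leqslant q$ has to be converted into a drop of complexity \emph{before} Gasparis is invoked: for instance, once $\min N>q$ the measure $\mathbb{S}^\xi_{P,1}$ ignores $[1,q]$, so one may replace $\mathcal{H}$ by $\bigcup_{p\leqslant q}\{E: p<E,\ (p)\cup E\in \mathcal{H}\}$, a finite union of hereditary, spreading families whose index is strictly smaller than $\omega^\xi$ (the rank of a singleton in $\mathcal{H}$ is strictly below the rank of $\varnothing$). Nothing of this sort appears in your sketch, and without it both your successor and limit steps for (i) break down.

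Second, the estimate at the heart of your successor step is false as stated: you cannot bound $\mathbb{S}^\zeta_{Q,1}(E_i)\leqslant \ee/k$ uniformly over the pieces $E_i\in\mathcal{S}_\zeta$, because $\text{supp}(\mathbb{S}^\zeta_{Q,1})$ is itself a member of $MAX(\mathcal{S}_\zeta)$, so $\sup\{\mathbb{S}^\zeta_{Q,1}(F):F\in\mathcal{S}_\zeta\}=1$; moreover the inductive hypothesis (i) at level $\zeta$ with the \emph{fixed} threshold $q$ says nothing about $E_2,\ldots,E_m$, whose minima are unbounded (in part (ii) not even $E_1$ has bounded minimum). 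The accounting that works distinguishes, for each pair $(i,j)$, whether $E_i$ begins inside $\text{supp}(\mathbb{S}^\zeta_{P,j})$ or before it: each $E_i$ begins inside at most one block, so those at most $k$ pairs contribute at most $k/p_1\leqslant k/\min N$ to the average $p_1^{-1}\sum_{j=1}^{p_1}\mathbb{S}^\zeta_{P,j}(E)$ with no inductive input at all, while for the remaining pairs one has $\min E_i\leqslant \max\text{supp}(\mathbb{S}^\zeta_{P,j-1})$ --- a threshold depending on $P$ and $j$, not on $q$ --- and it is to these that (i) at level $\zeta$ applies, uniformly only after a diagonal construction of $N$ in which each new element is chosen so that all later tails satisfy the conclusion of (i) for the thresholds created by the earlier elements. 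The same kind of diagonalization is what your limit step actually needs: a set $N$ supplied by the inductive hypothesis at level $\xi_n+1$ is not automatically good at level $\xi_m+1$ for $m\neq n$, so ``monotonicity in the level'' must be replaced by a recursive choice of $N$ ensuring that for $P\in[N]$ with $\min P=n_k$ the relevant tail of $P$ lies in a set that works at level $\xi_{n_k}+1$.
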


Given a regular family $\mathcal{G}$ and $M\in[\nn]$, let $\mathcal{G}\bowtie M=\{(i,F): i\in F\in [M]^{<\nn}\cap MAX(\mathcal{G})\}$.  Given  a function $f:\mathcal{G}\bowtie M\to \rr$ and $N\in[M]$, we let $$\|f\|_N=\sup \{|f(i,F)|: (i,F)\in \mathcal{G}\bowtie N\}.$$   
The next result combines an argument of Schlumprecht (\cite[Corollary $4.10$]{Schlumprecht} with \cite[Lemma $3.10$]{CN}.

\begin{lemma} Fix a countable ordinal $\xi$ and $\ee\in \rr$.    Let $f:\mathcal{S}_\xi\bowtie Q\to \rr$ be  a bounded function.    If there exists $L\in[Q]$ such that $$[L]\subset \Bigl\{M\in[\nn]: \sum_{j\in M_{\mathcal{S}_\xi}} f(j, M_{\mathcal{S}_\xi}) \geqslant \ee\Bigr\},$$ then for any $M\in [L]$ and $\delta<\ee$,  there exists $P\in [M]$ such that for any $E\in \mathcal{S}_\xi$, there exists $F\in MAX(\mathcal{S}_\xi)\cap [M]^{<\nn}$ such that $P(E)\subset F$ and for each $j\in P(E)$, $f(j, F)\geqslant \delta$.

\label{Schlumprecht}
\end{lemma}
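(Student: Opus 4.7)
I would build $P\in[M]$ by a Ramsey-style extraction that combines the permanence of $\mathcal{S}_\xi$-block decompositions with the repeated-average measure control of Proposition \ref{CN}, in the spirit of Schlumprecht's \cite[Corollary 4.10]{Schlumprecht} reinforced by \cite[Lemma 3.10]{CN}.

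First I would upgrade the hypothesis by using permanence. For any $N\in[L]$ and any $k\geqslant 1$, the tail $N^{(k)}=N\setminus\bigcup_{i<k}N_{\mathcal{S}_\xi,i}$ lies in $[L]$, and its first maximal $\mathcal{S}_\xi$-block satisfies $N^{(k)}_{\mathcal{S}_\xi}=N_{\mathcal{S}_\xi,k}$ as a set (since the length of a maximal Schreier block is determined by its minimum, and $\min N^{(k)}=\min N_{\mathcal{S}_\xi,k}$). Therefore the hypothesis promotes to $\sum_{j\in N_{\mathcal{S}_\xi,k}}f(j,N_{\mathcal{S}_\xi,k})\geqslant \ee$ for every $N\in[L]$ and every $k\geqslant 1$. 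In other words, every maximal Schreier block $F$ of every $N\in[L]$ carries the sum bound.

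Second, fix $M\in[L]$ and $\delta<\ee$, and for each maximal $F\in\mathcal{S}_\xi\cap[M]^{<\nn}$ set $B_F=\{j\in F:f(j,F)<\delta\}$. Since $f$ is bounded and $\sum_{j\in F}f(j,F)\geqslant\ee>\delta$, the bad set $B_F$ cannot carry too much $f$-mass, but its \emph{counting} size can still be close to $|F|$; what is actually controlled is its mass in the repeated-average measure $\mathbb{S}^\xi$ supported on $F$. I would then apply Proposition \ref{CN}(ii) to the hereditary family encoding the possible locations of the bad indices, passing to $N\in[M]$ on which the $\mathbb{S}^\xi_{P,1}$-mass of any $B_F$ is uniformly tiny. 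Running Schlumprecht's dichotomy-style extraction along the tree of Schreier sets then picks $P\in[N]$ so that for every $E\in\mathcal{S}_\xi$, the image $P(E)$ is contained in a single maximal Schreier $F\subseteq M$ — the natural one determined by $\min P(E)$ and suitably extended inside $M$ — with $P(E)\cap B_F=\varnothing$, which is exactly the required conclusion.

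The main obstacle, and the reason the proof needs both Schlumprecht's Corollary 4.10 and CN's Lemma 3.10, is that a single $P$ must handle every Schreier $E$ simultaneously, while for each $E$ the covering $F$ shifts and the bad set $B_F$ is defined globally on $F$. Counting-measure bounds on $B_F$ are too crude to drive a Ramsey argument, but the repeated-average measure $\mathbb{S}^\xi$ is the right substitute: by Proposition \ref{CN}(ii) it is uniformly small on families of small Cantor–Bendixson index, and Schlumprecht's averaging trick converts this averaged smallness into a pointwise avoidance of $B_F$ on every $P(E)$ at once.
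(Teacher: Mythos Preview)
The paper does not supply its own proof of this lemma; it simply records that the result ``combines an argument of Schlumprecht \cite[Corollary 4.10]{Schlumprecht} with \cite[Lemma 3.10]{CN}'' and states the conclusion. Your plan invokes exactly these two ingredients, so at the level of strategy you are aligned with what the paper intends.

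That said, two points in your sketch are imprecise. First, the parenthetical justification in your permanence step --- ``the length of a maximal Schreier block is determined by its minimum'' --- is false for general $\xi$ (maximality of $E\in\mathcal{S}_\xi$ depends on all of $E$, not only $\min E$). The conclusion you want, $N^{(k)}_{\mathcal{S}_\xi}=N_{\mathcal{S}_\xi,k}$, is correct, but it follows from the permanence property of the decomposition $M=\bigcup_i M_{\mathcal{S}_\xi,i}$, not from the reason you give. Second, and more substantively, the hypothesis is an \emph{unweighted} sum $\sum_{j\in F}f(j,F)\geqslant \ee$, so your assertion that ``what is actually controlled is its mass in the repeated-average measure $\mathbb{S}^\xi$ supported on $F$'' does not follow from anything you have written; the unweighted bound gives no direct information about $\mathbb{S}^\xi$-mass of $B_F$. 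You also never specify the hereditary family $\mathcal{H}$ to which Proposition \ref{CN}(ii) is to be applied, nor verify $CB(\mathcal{H})\leqslant\omega^\xi$. As a plan pointing to the right references this is fine, but as written it is not yet a proof: the bridge from the unweighted sum hypothesis to the Ramsey extraction still needs to be built, and that bridge is precisely the content of the Schlumprecht argument you are citing.
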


We next recall a special case of the infinite Ramsey theorem, the proof of which was achieved in steps by Nash-Williams \cite{NW}, Galvin and Prikry \cite{GP}, Silver \cite{Silver}, and Ellentuck \cite{Ellentuck}.

\begin{theorem} If $\mathcal{V}\subset [\nn]$ is closed, then for any $M\in [\nn]$, there exists $N\in [M]$ such that either $$[N]\subset \mathcal{V}\text{\ \ \ \ \ or\ \ \ \ \ }[N]\cap \mathcal{V}=\varnothing.$$

\end{theorem}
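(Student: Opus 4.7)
My plan is to prove this via reduction to a combinatorial dichotomy about initial segments, following the classical Nash-Williams--Galvin--Prikry--Silver strategy. First I would pass to the open complement $\mathcal{U}=[\nn]\setminus \mathcal{V}$; producing $N\in[M]$ with $[N]\subset \mathcal{U}$ or $[N]\cap \mathcal{U}=\varnothing$ yields the theorem (with the two alternatives swapped). Since basic open sets in $[\nn]$ have the form $\{L\in[\nn]:s\preceq L\}$ for $s\in [\nn]^{<\nn}$, there exists $\mathcal{F}\subset [\nn]^{<\nn}$ such that $L\in \mathcal{U}$ if and only if some initial segment of $L$ lies in $\mathcal{F}$. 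The problem thus reduces to finding $N\in [M]$ such that either every $L\in [N]$ has an initial segment in $\mathcal{F}$, or no $L\in [N]$ does.

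Next I would introduce accept/reject terminology: for $s\in [\nn]^{<\nn}$ and $P\in [\nn]$ with $s<P$, say $P$ \emph{accepts} $s$ if every $L\in [P]$ satisfies $s\cup L\in \mathcal{U}$, and $P$ \emph{rejects} $s$ if no $Q\in [P]$ accepts $s$. By definition one can always pass to $Q\in [P]$ that accepts or rejects $s$, and the status is inherited by further infinite subsets. The first substantial step is a fusion: recursively construct $m_1<m_2<\cdots$ in $M$ and a descending chain $M\supseteq M_1\supseteq M_2\supseteq\cdots$ in $[\nn]$ with $m_n=\min M_n$, arranged so that at stage $n$ the tail $M_n$ has decided every subset of $\{m_1,\ldots,m_n\}$. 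Setting $N=\{m_1,m_2,\ldots\}$, every finite $s\subseteq N$ is then decided by the appropriate tail of $N$.

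If $\varnothing$ is accepted by $M_1$, then by inheritance $[N]\subset \mathcal{U}$ immediately. If $\varnothing$ is rejected, I would invoke the key Galvin lemma: whenever $P$ rejects $s$, there exists $Q\in [P]$ such that $Q$ rejects $s\cup\{n\}$ for every $n\in Q$. This is established by contradiction: failure would allow one, by iterated extraction, to build witnesses to acceptance of $s\cup\{n_i\}$ along a sequence, which can be reassembled to contradict rejection of $s$. A second fusion, applied after Galvin's lemma to propagate rejection through all finite initial segments simultaneously, produces $N'\in [N]$ such that no finite initial segment of any $L\in [N']$ lies in $\mathcal{F}$; hence $[N']\cap \mathcal{U}=\varnothing$.

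The main obstacle, as I expect, is Galvin's lemma itself: the passage from rejection of $s$ to uniform rejection of one-step extensions requires delicate diagonal extraction, since one must rule out the possibility that partial acceptances of the extensions can be patched together into an acceptance of $s$. Once this lemma is secured, the remainder of the proof is a bookkeeping exercise in simultaneous fusion over the countably many finite subsets of the chosen $N$, and the stated dichotomy for closed sets follows at once by complementation.
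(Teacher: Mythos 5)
The paper does not prove this statement; it cites it as the classical Nash-Williams--Galvin--Prikry--Silver--Ellentuck Ramsey theorem for closed (equivalently open) subsets of $[\nn]$. Your sketch correctly reproduces the standard Galvin--Prikry accept/reject-plus-fusion argument, which is exactly the proof found in the cited sources, so it matches the approach the paper implicitly relies on.
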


\section{Schreier, mixed Schreier, and Baernstein spaces}

Given $F\subset \nn$, we let $F$ denote the projection from $c_{00}$ to itself given by $Fx=(1_F(i)e^*_i(x))_{i=1}^\infty$.   Given a regular family $\mathcal{G}$ containing all singletons,  we let $X_\mathcal{G}$ be the completion of $c_{00}$ with respect to the norm $$\|x\|_\mathcal{G}= \max\{\|Fx\|_{\ell_1}: F\in \mathcal{G}\}.$$ These are the \emph{Schreier spaces}.      Given $1<p\leqslant \infty$, we let $X_{\mathcal{G},p}$ denote the completion of $c_{00}$ with respect to the norm $$\|x\|_{\mathcal{G},p}= \sup\Bigl\{\|\sum_{i=1}^\infty \|F_ix\|_{\ell_1}e_i\|_{\ell_p}: F_1<F_2<\ldots, F_i\in \mathcal{G}\Bigr\}.$$  These are the \emph{Baernstein spaces}. For convenience, if $\mathcal{G}=\mathcal{S}_\xi$, we write $\|\cdot\|_\xi$ in place of $\|\cdot\|_{\mathcal{S}_\xi}$ and we write $\|\cdot\|_{\xi,p}$ in place of $\|\cdot\|_{\mathcal{S}_\xi, p}$.

Given a sequence $\mathcal{G}_0, \mathcal{G}_1, \ldots$ of regular families such that $\mathcal{G}_0$ contains all singletons and a sequence $1=\vartheta_0>\vartheta_1>\ldots$ with $\lim_n \vartheta_n=0$, we let $X(\mathcal{G}_n, \vartheta_n)$ denote the completion of $c_{00}$ with respect to the norm $$\|x\|_{\mathcal{G}_n, \vartheta_n}=\sup \{\vartheta_n \|x\|_{\mathcal{G}_n}: n\in\nn\cup \{0\}\}.$$   We will refer to these spaces as the \emph{mixed Schreier} spaces.   Note that the Schreier, Baernstein, and mixed Schreier spaces have properties $R$ and $S$.  Note also that the Schreier and Baernstein spaces satisfy property $T$.  

\begin{lemma} Fix $\xi<\omega_1$, $1<p\leqslant \infty$, regular families $\mathcal{G}_0, \mathcal{G}_1, \ldots$, and a null sequence $(\vartheta_n)_{n=0}^\infty$ such that $1=\vartheta_0>\vartheta_1>\ldots$. Let $1/p+1/q=1$.   \begin{enumerate}[(i)]\item If $0<\ee\leqslant 1$ and $m\in\nn$ are such that $1/m^{1/q}<\ee$, then $$CB(\{F\in [\nn]^{<\nn}: (\forall x\in \text{\emph{co}}(e_i: i\in F))(\|x\|_{\xi,p} \geqslant \ee)\})\leqslant \omega^\xi m.$$ \item $CB(\{F\in [\nn]^{<\nn}: (\forall x\in \text{\emph{co}}(e_i: i\in F))(\|x\|_{\xi,p} \geqslant 1/m^{1/q})\})= \omega^\xi m+1.$    \item If $0<\xi$ and  $CB(\mathcal{G}_n)\leqslant \omega^\xi$ for all $n\in\nn$, then for any $0<\ee\leqslant 1$, $$CB(\{F\in [\nn]^{<\nn}: (\forall x\in \text{\emph{co}}(e_i: i\in F))(\|x\|_{\mathcal{G}_n, \vartheta_n}\geqslant \ee)\})<\omega^\xi.$$   \end{enumerate}

\label{door}

\end{lemma}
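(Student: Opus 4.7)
The plan is to verify that each of the three families $\mathcal{H}$ appearing in the statement is regular, then attack the four asserted bounds. Hereditariness of $\mathcal{H}$ is immediate since $G\subset F$ gives $\text{co}(e_i:i\in G)\subset \text{co}(e_i:i\in F)$, and spreading follows from property $R$ of the sequence space governing the norm (verified in the excerpt for both $\|\cdot\|_{\xi,p}$ and $\|\cdot\|_{\mathcal{G}_n,\vartheta_n}$). For compactness I would note that if $[M]^{<\nn}\subset \mathcal{H}$ for some $M\in[\nn]$, then after passing to a sufficiently sparse $M'\in[M]$ the Cesàro averages $\frac{1}{N}\sum_{i=1}^N e_{m_i'}$ fall below $\varepsilon$ in norm (in (iii) the fact that $\vartheta_n\to 0$ first reduces this to finitely many $n$), contradicting membership in $\mathcal{H}$.

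The lower bound in (ii) is the direct half: for any $F=E_1\cup\cdots\cup E_k\in\mathcal{A}_m[\mathcal{S}_\xi]$ with $k\leqslant m$ and any $x=\sum a_i e_i\in\text{co}(e_i:i\in F)$, taking the partition $F_j=E_j$ in the definition of $\|\cdot\|_{\xi,p}$ gives $\|x\|_{\xi,p}\geqslant(\sum_j b_j^p)^{1/p}\geqslant k^{-1/q}\geqslant m^{-1/q}$, where $b_j=\sum_{i\in E_j} a_i$ sums to $1$ and the bound is the power-mean inequality. Thus $\mathcal{A}_m[\mathcal{S}_\xi]\subset\mathcal{H}_{m^{-1/q}}$, and Proposition \ref{deep facts}(i),(iii) give $CB(\mathcal{H}_{m^{-1/q}})\geqslant\omega^\xi m+1$.

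For the upper bounds in (i) and (ii) I would argue by contradiction using the repeated averages. Suppose $CB(\mathcal{H}_\varepsilon)>\omega^\xi m$ in the setting of (i); then Proposition \ref{deep facts}(v) applied to $\mathcal{A}_m[\mathcal{S}_\xi]$ yields $M\in[\nn]$ with $\mathcal{A}_m[\mathcal{S}_\xi](M)\subset\mathcal{H}_\varepsilon$. After refining $M$ via Proposition \ref{CN} so that partial Schreier-set overlaps with each $\text{supp}(\mathbb{S}^\xi_{M,j})$ are smaller than a prescribed $\delta$, I form the vectors $x_j=\sum_i \mathbb{S}^\xi_{M,j}(i)e_i$ for $j\leqslant m$, the set $F=\bigcup_j \text{supp}(x_j)\in \mathcal{A}_m[\mathcal{S}_\xi](M)$, and the convex combination $x=\frac{1}{m}\sum_j x_j$. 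The key estimate is $\|x\|_{\xi,p}\leqslant m^{-1/q}+o_\delta(1)$: any $F_i\in\mathcal{S}_\xi$ can fully contain at most one $\text{supp}(\mathbb{S}^\xi_{M,j})$ by the Schreier size constraint on $|F_i|$ versus $\min F_i$ (provided $\min M$ is large), the remaining partial overlaps contribute at most $\delta$ each by the refinement, and combining $\max_i\|F_i x\|_{\ell_1}\leqslant m^{-1}+\delta$ with $\sum_i\|F_i x\|_{\ell_1}\leqslant 1$ via a power-mean estimate gives the bound. Since $\varepsilon>m^{-1/q}$ in (i), this contradicts $F\in\mathcal{H}_\varepsilon$. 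The upper bound in (ii) needs more, since the estimate $m^{-1/q}$ is tight: assuming $CB(\mathcal{H}_{m^{-1/q}})\geqslant\omega^\xi m+2$, I would apply Lemma \ref{Schlumprecht} to the coefficient function $(i,F)\mapsto a_i$ obtained from near-minimizing convex combinations over $\mathcal{A}_m[\mathcal{S}_\xi](M)$ to extract a subfamily on which the infimum of $\|x\|_{\xi,p}$ is strictly above $m^{-1/q}$, reducing to (i) with $\varepsilon'>m^{-1/q}$.

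Finally for (iii), the strict inequality $CB(\mathcal{G}_n)\leqslant\omega^\xi$ permits direct use of Proposition \ref{CN}(ii): choosing $N_0$ with $\vartheta_{N_0+1}<\varepsilon$ and iterating Proposition \ref{CN}(ii) over $n\leqslant N_0$ yields $N\in[\nn]$ with $\sup\{\mathbb{S}^\xi_{P,1}(E):E\in\bigcup_{n\leqslant N_0}\mathcal{G}_n,P\in[N]\}<\varepsilon/2$. If $CB(\mathcal{H})\geqslant\omega^\xi$, Proposition \ref{deep facts}(v) gives $M\in[N]$ with $\mathcal{S}_\xi(M)\subset\mathcal{H}$; then $x=\sum_i\mathbb{S}^\xi_{M,1}(i)e_i\in\text{co}(e_i:i\in\text{supp}(\mathbb{S}^\xi_{M,1}))$ satisfies $\|x\|_{\mathcal{G}_n}<\varepsilon/2$ for $n\leqslant N_0$ and $\vartheta_n\|x\|_{\mathcal{G}_n}\leqslant\vartheta_n<\varepsilon$ otherwise, so $\|x\|_{\mathcal{G}_n,\vartheta_n}<\varepsilon$, contradicting $\text{supp}(\mathbb{S}^\xi_{M,1})\in\mathcal{H}$. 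The main obstacle throughout is the upper bound in (ii): the other bounds follow from either a direct Hölder computation or the standard "assume a larger Cantor–Bendixson index, extract an averaging configuration via Proposition \ref{deep facts}(v), deduce small norm via Proposition \ref{CN}" scheme, whereas (ii) requires Schlumprecht's stabilization to promote the tight non-strict bound $\|x\|_{\xi,p}\leqslant m^{-1/q}$ into a usable strict inequality.
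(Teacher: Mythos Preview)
Your approach to part (i), the lower bound of (ii), and part (iii) is essentially the paper's: set up the contradiction, extract a copy of $\mathcal{A}_m[\mathcal{S}_\xi]$ (or $\mathcal{S}_\xi$) inside the family via Proposition~\ref{deep facts}(v), refine via Proposition~\ref{CN}, and build a repeated-averages convex combination whose norm is too small. Two minor remarks. In (i), your justification ``by the Schreier size constraint on $|F_i|$ versus $\min F_i$'' is literally valid only for $\mathcal{S}_1$; the correct reason a single $F_i\in\mathcal{S}_\xi$ cannot properly contain any $\text{supp}(\mathbb{S}^\xi_{M,j})$ is that the latter is \emph{maximal} in $\mathcal{S}_\xi$. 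In (iii), to obtain $M\in[N]$ (rather than merely some $M$) after Proposition~\ref{deep facts}(v) you must invoke the spreading property of $\mathcal{H}$; the paper instead finds $M$ first and applies Proposition~\ref{CN}(ii) to the pulled-back families $\mathcal{G}_i(M^{-1})$.

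The genuine gap is the upper bound in (ii). Your plan to invoke Lemma~\ref{Schlumprecht} on ``the coefficient function $(i,F)\mapsto a_i$'' does not lead anywhere: that lemma is stated for $\mathcal{S}_\xi$, not $\mathcal{A}_m[\mathcal{S}_\xi]$, and in any case the coefficients of a convex combination sum to $1$, so stabilizing them near some positive value says nothing about $\|x\|_{\xi,p}$ strictly exceeding $m^{-1/q}$. Indeed, for $F\in MAX(\mathcal{A}_m[\mathcal{S}_\xi])$ the infimum of $\|x\|_{\xi,p}$ over $\text{co}(e_i:i\in F)$ is \emph{exactly} $m^{-1/q}$, attained at the uniform average of the $m$ Schreier blocks, so no stabilization on that family alone can manufacture a strict inequality to reduce to (i). The paper's argument is both simpler and different in kind: from $CB(\mathcal{B}_{m^{-1/q}})>\omega^\xi m+1$ one extracts a single integer $n_0$ such that $\mathcal{H}=\{E:(n_0)\cup E\in\mathcal{B}_{m^{-1/q}}\}$ still satisfies $CB(\mathcal{H})\geqslant\omega^\xi m+1$, hence contains a copy of $\mathcal{A}_m[\mathcal{S}_\xi](M)$. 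One then reruns the construction of (i) with the extra point $e_{n_0}$ thrown in, forming $x=\frac{1}{m+1}\bigl(e_{n_0}+\sum_{i=1}^m x_i\bigr)$, a convex combination of $m+1$ blocks; the same estimate now yields $\|x\|_{\xi,p}\leqslant (m+1)^{-1/q}+(m+1)\delta<m^{-1/q}$ for $\delta$ small, contradicting membership of $(n_0)\cup\bigcup_i\text{supp}(x_i)$ in $\mathcal{B}_{m^{-1/q}}$. The extra point, not a stabilization lemma, is what converts the tight bound into a strict one.
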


\begin{proof} $(i)$ Fix $1<p\leqslant \infty$ and for $0<\ee\leqslant 1$, let $$\mathcal{B}_\ee=\{E\in [\nn]^{<\nn}: (\forall x\in \text{co}(e_i: i\in F))(\|x\|_{\xi,p}\geqslant \ee)\}.$$ It is clear that $\mathcal{B}_\ee$ is hereditary, and since $X_{\xi,p}$ has property $R$, $\mathcal{B}_\ee$ is spreading.   Fix $0<1/m^{1/q}<\ee\leqslant 1$ and suppose that $CB(\mathcal{B}_\ee)\geqslant \omega^\xi m$.   If $\xi=0$, then $X_{\xi,p}=\ell_p $ (resp. $c_0$ if $p=\infty$).  Then if $E\in \mathcal{B}_\ee$ and $r=|E|>0$, then $$\ee\leqslant \|r^{-1}\sum_{i\in E}e_i\|_{\ell_p} = 1/r^{1/q},$$ and $r<m$.   This means $CB(\mathcal{B}_\ee)<m+1$, and $CB(\mathcal{B}_\ee)\leqslant m$.

Now suppose $0<\xi$.  Then if $CB(\mathcal{B}_\ee)\geqslant \omega^\xi m$,  since $\omega^\xi m$ is a limit ordinal, $CB(\mathcal{B}_\ee)\geqslant \omega^\xi m+1$. This means there exists $M=(m_i)_{i=1}^\infty \in [\nn]$ such that $\mathcal{A}_m[\mathcal{S}_\xi](M)\subset \mathcal{B}_\ee$.    Let $\mathcal{G}=\mathcal{S}_\xi(M^{-1})$ and note that $CB(\mathcal{G})=\omega^\xi+1$.   Fix $\delta>0$ such that $\ee>1/m^{1/q}+m\delta$.   Recursively select $N_1=\nn$, $n_1\in N_1$, $N_2\in [N_1]$, $n_1<n_2\in N_2$, $\ldots$, such that for each $k\in\nn$, $$\sup \{\mathbb{S}^\xi_{N,1}(E): E\in \mathcal{G}, \min E\leqslant n_{k-1}, N\in [N_k]\}\leqslant \delta.$$  Now let $N=(n_i)_{i=1}^\infty$.

 For each $i\in\nn$, let $x_i=\sum_{j=1}^\infty \mathbb{S}^\xi_{N,i}(j)e_{m_j}$ and Let $P_i=M\setminus \cup_{j=1}^{i-1}\text{supp}(\mathbb{S}^\xi_{N,j})$. Suppose that for some $F\in \mathcal{S}_\xi$ and $i\in \nn$, $Fx_i\neq 0$.  Fix $i<j$.   Let $\max\text{supp}(x_i)=m_{n_{s-1}}$ and let $G=(n_{s-1})\cup \{i: m_i\in F\cap \text{supp}(x_j)\}$.  Then $M(G)$ is a spread of a subset of $F$, so $G\in \mathcal{G}$ and $\min G\leqslant n_{s-1}$. Furthermore, since $P_s\in [N_s]$,   $$\|Fx_j\|_{\ell_1} \leqslant \mathbb{S}^\xi_{N,s}(G)=\mathbb{S}^\xi_{P_s,1}(G)\leqslant \delta.$$

  Note that $\cup_{i=1}^m \text{supp}(x_i)\in \mathcal{A}_m[\mathcal{S}_\xi](M)\subset \mathcal{B}_\ee$.    From this it follows that with $x=m^{-1}\sum_{i=1}^m x_i$,  $\|x\|_{\xi,p}\geqslant \ee$, whence there exist $F_1<\ldots <F_r$, $F_j\in \mathcal{S}_\xi$ such that $$\ee\leqslant \|\sum_{j=1}^r \|F_jx\|_{\ell_1}e_j\|_{\ell_p}.$$  By omitting extraneous sets, we may assume that $F_jx\neq \varnothing$ for each $1\leqslant j\leqslant r$.  Let $T_1, \ldots, T_m$ be such that $j\in T_i$ if and only if $F_jx_l=\varnothing$ for each $l<i$ and $F_jx_i\neq \varnothing$.   Note that for each $1\leqslant i<m$ and $j\in T_i\setminus \{\max T_i\}$, $F_j x_l=0$ for each $i<l\leqslant m$, and if $j=\max T_i$, $\|F_jx_l\|_{\ell_1}\leqslant \delta$ for each $i<l\leqslant m$.      From this it follows that \begin{align*} \|\sum_{j=1}^r \|F_jx\|_{\ell_1} e_j\|_{\ell_p} & \leqslant \|\sum_{i=1}^m \frac{1}{m}(\sum_{j\in T_i}\|F_jx_i\|_{\ell_1}+ \delta m)e_j\|_{\ell_p} \leqslant \frac{1}{m}\|\sum_{i=1}^m \|x_i\|_{\ell_1} e_j\|_{\ell_p} + \delta m = 1/m^{1/q}+\delta m<\ee.  \end{align*}

$(ii)$ If $\xi=0$, then $\mathcal{B}_{1/m^{1/q}}=\mathcal{A}_m$. Now assume $0<\xi$.  It is easy to verify that $\mathcal{A}_m[\mathcal{S}_\xi]\subset \mathcal{B}_{1/m^{1/q}}$, whence $CB(\mathcal{B}_{1/m^{1/q}})\geqslant CB(\mathcal{A}_m[\mathcal{S}_\xi])=\omega^\xi m+1$.    Seeking a contradiction, assume $CB(\mathcal{B}_{1/m^{1/q}})>\omega^\xi m+1$.  This means there exists $n_0$ such that $\mathcal{H}=\{E\in [\nn]^{<\nn}: (n_0)\cup E\in \mathcal{B}_{1/m^{1/q}}\}$ has $CB(\mathcal{H})\geqslant \omega^\xi m+1$.  From this it follows that there exists $M\in [\nn]$ such that $\mathcal{A}_m[\mathcal{S}_\xi](M)\subset \mathcal{H}$.       Let $\mathcal{G}=\mathcal{H}(M^{-1})$.   Arguing as above, we fix $\delta>0$ such that $1/(m+1)^{1/q}+ \delta (m+1)<1/m^{1/q}$.  We then recursively select $N_1$, $n_0<n_1\in N_1$, $N_2\in [N_1]$, $n_1<n_2\in N_2$, $\ldots$ such that for each $k\in\nn$, $$\sup \{\mathbb{S}^\xi_{P, 1}(E): E\in \mathcal{H}, \min E\leqslant n_{k-1}, E\in \mathcal{G}, P\in [N_k]\}\leqslant \delta.$$  Let $N=(n_i)_{i=1}^\infty$.     We argue as in $(i)$ to deduce that $$\frac{1}{m^{1/q}} \leqslant \|\frac{1}{m+1}(e_{n_0}+\sum_{i=1}^m \sum_{j=1}^\infty \mathbb{S}^\xi_{N,i}(j)e_{m_j})\|_{\xi,p} \leqslant \frac{1}{(m+1)^{1/q}} + \delta(m+1),$$ a contradiction.

$(iii)$ Let $X=X(\mathcal{G}_n, \vartheta_n)$ and for $0<\ee\leqslant 1$, let $\mathcal{B}_\ee=\{E\in [\nn]^{<\nn}: (\forall x\in \text{co}(e_i: i\in E)(\|x\|_X\geqslant \ee)\}$. Note that, since $X$ has property $R$, $\mathcal{B}$ is spreading and hereditary.  If $CB(\mathcal{B}_\ee)\geqslant \omega^\xi$, then since $\omega^\xi$ is a limit ordinal, $CB(\mathcal{B}_\ee)>\omega^\xi$.  Then there exists $M\in[\nn]$ such that $\mathcal{S}_\xi(M)\subset \mathcal{B}_\ee$.   Fix $n_1\in \nn$ such that $\vartheta_{n_1}<\ee$ and $N\in[\nn]$ such that $$\sup \{\mathbb{S}^\xi_{N,1}(E): E\in \cup_{i=1}^{n_1} \mathcal{G}_i(M^{-1})\}<\ee/2.$$   We may do this, since $$CB(\cup_{i=1}^{n_1} \mathcal{G}_i(M^{-1}))=\max_{1\leqslant i\leqslant n_1}CB(\mathcal{G}_i)<\omega^\xi.$$   Then let $x=\sum_{j=1}^\infty \mathbb{S}^\xi_{N,1}(j)e_{m_j}$ and note that, since $\text{supp}(x)\in \mathcal{B}_\ee$, $\|x\|_X\geqslant \ee$.  However, if $F\in \cup_{i=1}^{n_1} \mathcal{G}_i$ and $G=\{i: m_i\in F\cap \text{supp}(x)\}$, $$\|Fx\|_{\ell_1}\leqslant \mathbb{S}^\xi_{N,1}(G)\leqslant \ee/2.$$   Thus $$\ee\leqslant \|x\|_X \leqslant \max\{\ee/2, \sup_{n\geqslant n_1}\vartheta_n \|x\|_{\ell_1}\} \leqslant \max\{\ee/2, \vartheta_{n_1}\}<\ee,$$ a contradiction.

\end{proof}

Fix $\xi\in (0, \omega_1)\setminus \{\omega^\eta: \eta\text{\ a limit ordinal}\}$.  If $\xi=\omega^{\zeta+1}$, let us say that the mixed Schreier space $X(\mathcal{G}_n, \vartheta_n)$ is $\xi$-\emph{well-constructed} provided that there exist $0<\vartheta<1$ and a regular family $\mathcal{G}$ with $\omega^{\omega^\zeta}<CB(\mathcal{G})<\omega^{\omega^{\zeta+1}}$ such that $$\mathcal{G}_0=\mathcal{S}_0,$$ $$\mathcal{G}_n=\mathcal{G}[\mathcal{G}_{n-1}]$$ for $n\in\nn$, and $\vartheta_n=\vartheta^n$ for all $n \in \nn\cup \{0\}$.   Note that such a sequence exists.  Indeed, we may take $\mathcal{G}_0=\mathcal{S}_\beta$ for some $\omega^\zeta<\beta<\omega^{\zeta+1}$ and then $\mathcal{G}_n=\mathcal{S}_\beta[\mathcal{G}_{n-1}]$.    

 If $\xi=1$, let us say that the mixed Schreier space $X(\mathcal{G}_n, \vartheta_n)$ is $\xi$-\emph{well-constructed} provided that there exist $0<\vartheta<1$ and a regular family $\mathcal{G}$ with $1<CB(\mathcal{G})<\omega$ such that $$\mathcal{G}_0=\mathcal{S}_0,$$ $$\mathcal{G}_n=\mathcal{G}[\mathcal{G}_{n-1}]$$ for $n\in\nn$, and $\vartheta_n=\vartheta^n$ for all $n \in \nn\cup \{0\}$.  Note  that such a sequence $\mathcal{G}_0, \mathcal{G}_1, \ldots$ exists. Indeed, we may fix $l\in \nn$ and take $\mathcal{G}_n=\mathcal{A}_{l^n}$ for all $n\in \nn\cup \{0\}$. 

Now assume that $\xi\in (0,\omega_1)\setminus \{\omega^\eta: \eta<\omega_1\}$.    Let us say $X(\mathcal{G}_n, \vartheta_n)$ is $\xi$-\emph{well-constructed} provided that there exist some ordinals $\beta, \gamma<\xi$ such that $\beta+\gamma=\xi$, $CB(\mathcal{G}_0)=\omega^\beta+1$ and there exist regular families $\mathcal{F}_1, \mathcal{F}_2, \ldots$ such that $\mathcal{G}_n=\mathcal{G}_n[\mathcal{G}_0]$ and $CB(\mathcal{F}_n)\uparrow \omega^\gamma$. Note that there is no requirement that $(\vartheta_n)_{n=0}^\infty$   be a geometric sequence in this case.    Note that such $\beta, \gamma$ and such a sequence of $\mathcal{G}_0, \mathcal{G}_1, \ldots$ exists.  Indeed, by basic facts about ordinals, if $\xi\in (0,\omega_1)\setminus \{\omega^\eta: \eta<\omega_1\}$, there exist $\beta, \gamma<\xi$ with $\beta+\gamma=\xi$. If $\gamma=\zeta+1$, let $\mathcal{G}_0=\mathcal{S}_\beta$, $m_1<m_2<\ldots$ be natural numbers, and $\mathcal{F}_n=\mathcal{A}_{m_n}[\mathcal{S}_\zeta]$.   If $\gamma$ is a limit ordinal, let $\mathcal{G}_0=\mathcal{S}_\beta$, $\gamma_n\uparrow \gamma$, and $\mathcal{F}_n=\mathcal{S}_{\gamma_n}$.

\section{Szlenk index}

Given a Banach space $X$, a weak$^*$-compact subset $K$ of $X^*$, and $\ee>0$, we let $s_\ee(K)$ denote the set of those $x^*\in K$ such that for any weak$^*$-neighborhood $v$ of $x^*$, $\text{diam}(v\cap K)>\ee$.  We let $s_\ee(K)=K$ for any $\ee\leqslant 0$.   We then define the transfinite derivations by $$s_\ee^0(K)=K,$$ $$s^{\xi+1}_\ee(K)=s_\ee(s^\xi_\ee(K)),$$ and if $\xi$ is a limit ordinal, let $$s^\xi_\ee(K)=\bigcap_{\zeta<\xi} s^\zeta_\ee(K).$$   If there exists an ordinal $\xi$ such that $s^\xi_\ee(K)=\varnothing$, we let $Sz(K, \ee)$ be the minimum such ordinal, and otherwise we write $Sz(K,\ee)=\infty$.   We agree to the convention that $Sz(K, \ee)<\infty$ means there exists an ordinal $\xi$ such that $s^\xi_\ee(K)=\varnothing$.  If $Sz(K, \ee)<\infty$ for all $\ee>0$, then we let $Sz(K)=\sup_{\ee>} Sz(K, \ee)$, and otherwise we write $Sz(K)=\infty$.    If $A:X\to Y$ is an operator, we write $Sz(A, \ee)$ and $Sz(A)$ in place of $Sz(A^*B_{Y^*}, \ee)$ and $Sz(A^*B_{Y^*})$, respectively.    If $X$ is a Banach space, we write $Sz(X,\ee)$ and $Sz(X)$ in place of $Sz(I_X,\ee)$ and $Sz(I_X)$.

If $K\subset X^*$ is weak$^*$-compact and $Sz(K)<\infty$, then for any $\ee>0$, there exists a minimum ordinal $\zeta$ such that $s^{\omega^\xi \zeta}_\ee(K)=\varnothing$.  We let $Sz_\xi(K, \ee)$ be this minimum ordinal. If $Sz(K, \ee)=\infty$, then $Sz_\xi(K, \ee)=\infty$.     If $Sz(K)\leqslant \omega^{\xi+1}$, then $Sz_\xi(K, \ee)<\omega$ for each $\ee>0$.   We then define $$\textsf{p}_\xi(K)=\underset{\ee\to 0^+}{\lim\sup} \frac{\log Sz_\xi(K, \ee)}{|\log(\ee)|},$$ noting that this value need not be finite.  If $Sz(K)=\infty$ or $Sz(K)>\omega^{\xi+1}$, we let $\textsf{p}_\xi(K)=\infty$.

The following is a generalization of a result from \cite{CL}.

\begin{lemma} Fix $\xi\in (0,\omega_1)\setminus \{\omega^\eta: \eta\text{\ a limit ordinal}\}$ and let $X=X(\mathcal{G}_n, \vartheta_n)$ be a $\xi$-well-constructed mixed Schreier space. \begin{enumerate}[(i)]\item If $\vartheta_n<\ee$, $Sz(X, \ee)\geqslant CB(\mathcal{G}_n)$.  \item $Sz(X)=\omega^\xi$. \end{enumerate}
\label{ww}
\end{lemma}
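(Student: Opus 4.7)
I will prove the two bounds $Sz(X) \geq \omega^\xi$ and $Sz(X) \leq \omega^\xi$ in tandem, combining the direct lower-bound construction of part (i) with the Cantor--Bendixson bound supplied by Lemma \ref{door}(iii).

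For part (i), fix $n$ with $\vartheta_n < \ee$. The natural weak$^*$-closed subset of $B_{X^*}$ to inspect is $K = \{\vartheta_n 1_E : E \in \mathcal{G}_n\}$, which carries a bijection with $\mathcal{G}_n$ respecting weak$^*$-convergence. For each $G \in \mathcal{G}_n$ set $m_0(G) := \min\{m : G \in \mathcal{G}_m\}$; the key duality computation is that, under the $\xi$-well-constructed hypothesis, $\vartheta_n \|1_G\|_{X^*}$ is bounded below by a constant close to $1$ whenever $m_0(G) = n$. Using the compositional structure of $\mathcal{G}_n$ (iterations of a fixed $\mathcal{G}$, or the family $\mathcal{F}_n[\mathcal{G}_0]$), at every non-maximal $E \in \mathcal{G}_n$ I would exhibit extensions $F \supsetneq E$ in $\mathcal{G}_n$ with $m_0(F \setminus E) = n$, giving weak$^*$-neighborhoods $V$ of $\vartheta_n 1_E$ with $\mathrm{diam}(V \cap K) > \ee$. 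A transfinite induction on the Cantor--Bendixson rank of $E$ in $\mathcal{G}_n$ then yields $\vartheta_n 1_E \in s^{r(E)}_\ee(B_{X^*})$, hence $Sz(X, \ee) \geq CB(\mathcal{G}_n)$.

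For part (ii), the lower bound $Sz(X) \geq \omega^\xi$ is immediate from (i): for any $\ee > 0$, $\vartheta_n < \ee$ for all large $n$, so $Sz(X, \ee) \geq \sup_n CB(\mathcal{G}_n)$, and a direct computation using Proposition \ref{deep facts}(i) in each of the three $\xi$-well-constructed cases gives $\sup_n CB(\mathcal{G}_n) = \omega^\xi$ --- for $\xi = 1$, $CB(\mathcal{G}_n) = l^n + 1 \uparrow \omega$; for $\xi = \omega^{\zeta+1}$, $CB(\mathcal{G}_n) = \beta^n + 1$ with $\omega^{\omega^\zeta} \leq \beta$, giving sup $\omega^{\omega^{\zeta+1}}$; for $\xi = \beta + \gamma$, $CB(\mathcal{G}_n) = \omega^\beta \alpha_n + 1$ with $\alpha_n + 1 = CB(\mathcal{F}_n) \uparrow \omega^\gamma$, giving sup $\omega^{\beta+\gamma}$. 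The matching upper bound is obtained via Lemma \ref{door}(iii): since $CB(\mathcal{G}_n) < \omega^\xi$ for every $n$, the family $\mathcal{B}_\ee = \{F \in [\nn]^{<\nn} : (\forall x \in \mathrm{co}(e_i : i \in F))\ \|x\|_X \geq \ee\}$ has $CB(\mathcal{B}_\ee) < \omega^\xi$ for each $\ee > 0$; the standard correspondence --- for spaces with a shrinking $1$-unconditional basis --- between the Cantor--Bendixson index of such families and the Szlenk index (obtained by extracting $\ee$-separated weak$^*$-null trees in $B_{X^*}$ and averaging dual basis vectors along branches into convex combinations supported in the family) then yields $Sz(X, \ee) < \omega^\xi$, hence $Sz(X) \leq \omega^\xi$.

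The main obstacle is the construction in (i): the naive separation between $\vartheta_n 1_E$ and $\vartheta_n 1_{E \cup \{j\}}$ is only $\vartheta_n < \ee$, so the $\ee$-diameter in weak$^*$-neighborhoods must be witnessed by ``deep'' extensions where $F \setminus E$ is maximal within $\mathcal{G}_n$ relative to $\mathcal{G}_{n-1}$. Verifying that such deep extensions remain available at every non-maximal node of the transfinite induction, and that the duality estimate $\vartheta_n \|1_G\|_{X^*} \asymp 1$ holds for these extensions, is where the compositional structure of the well-constructed families --- specifically Proposition \ref{deep facts}(i) --- plays a decisive role.
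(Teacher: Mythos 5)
Your proposal has two genuine gaps, one in each half. For part (i): the inequality in the printed statement is a typo (it should read $\ee<\vartheta_n$), and as literally stated the claim is false — for instance for $\ee\geqslant 2$ one has $s_\ee(B_{X^*})=\varnothing$, so $Sz(X,\ee)=1<CB(\mathcal{G}_n)$ while $\vartheta_n<\ee$ holds; moreover, reading (i) your way and letting $n\to\infty$ for fixed $\ee$ (as you do in your proof of (ii)) would give $Sz(X,\ee)\geqslant \sup_nCB(\mathcal{G}_n)=\omega^\xi$ for every $\ee>0$, flatly contradicting the upper bound you claim in the same argument — this internal inconsistency should have signalled the misprint. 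Your ``deep extension'' device cannot rescue the printed version: to have $\vartheta_n\|\sum_{i\in G}e^*_i\|_{X^*}$ close to $1$ you need $|G|\gtrsim 1/\vartheta_n$, but the transfinite induction needs extensions $F=E\cup G$ whose Cantor--Bendixson rank in $\mathcal{G}_n$ drops by at most one, and such extensions are in general only one-point extensions (in the $\xi=1$ case $\mathcal{G}_n=\mathcal{A}_{l^n}$ the rank of $E$ is exactly $l^n-|E|$, so rank-preserving-up-to-one extensions add a single element), which are only $\vartheta_n$-separated from $\vartheta_n\sum_{i\in E}e^*_i$; so the inductive step has no admissible witnesses. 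With the corrected hypothesis $\ee<\vartheta_n$ the paper's argument is exactly the ``naive'' one you set aside: distinct functionals $\vartheta_n\sum_{i\in F}e^*_i$, $F\in\mathcal{G}_n$, are $\vartheta_n$-separated with $\vartheta_n>\ee$, and induction on $\eta$ gives $\{\vartheta_n\sum_{i\in F}e^*_i:F\in\mathcal{G}_n^\eta\}\subset s^\eta_\ee(B_{X^*})$; in (ii) one then applies this with, for each $n$, some $\ee<\vartheta_n$, so $Sz(X)\geqslant\sup_nCB(\mathcal{G}_n)=\omega^\xi$ (your computation of this supremum in the three cases is fine).

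For the upper bound in (ii), the step from $CB(\mathcal{B}_\ee)<\omega^\xi$ (Lemma \ref{door}(iii)) to $Sz(X,\ee)<\omega^\xi$ via a ``standard correspondence'' is precisely where the work lies, and it is not standard in the form you invoke. The correspondence available (Corollary \ref{bad}) is between $Sz(K,\ee)$ and the family $\mathcal{H}(X,\textsf{F},K,\ee)$, whose members are witnessed by arbitrary successive \emph{block} vectors, not by convex combinations of the basis vectors; converting block witnesses into members of $\mathcal{B}_{\ee'}$ requires an upper estimate of property $(T)$ type, which the paper pointedly attributes only to the Schreier and Baernstein spaces, not to the mixed Schreier spaces. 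The paper's actual route is different: with $K_n=\{\vartheta_n\sum_{i\in F}e^*_i:F\in\mathcal{G}_n\}$ and $K=\{0\}\cup\bigcup_nK_n$, one has $rB_{X^*}\subset\overline{\text{abs\ co}}^{\text{weak}^*}(K)$, the convex hull theorem of \cite{C3} reduces the problem to $Sz(K)\leqslant\omega^\xi$, and then for $\vartheta_m<2\ee$, $m<n$, a direct induction shows $s^\eta_\ee(K_n)\subset\{\vartheta_n\sum_{i\in F}e^*_i:F\in\mathcal{G}^\eta_m[\mathcal{G}_{n-m}]\}$ (resp.\ $\mathcal{F}^\eta_n[\mathcal{G}_0]$ in the case $\xi=\beta+\gamma$), giving $Sz(K_n,\ee)\leqslant CB(\mathcal{G}_m)$ (resp.\ $\leqslant\omega^\gamma+1$) uniformly in $n$, hence $\sup_nSz(K_n,\ee)<\omega^\xi$. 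None of these ingredients (the reduction to the explicit generating set $K$, the convex hull theorem, the derivation-by-derivation combinatorial estimate) appears in your sketch, so as written the upper bound is unproved.
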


\begin{proof}$(i)$ It is straightforward to see that $\vartheta_n\sum_{i\in F} e^*_i\in B_{X^*}$ for any $F\in \mathcal{G}_n$, and if $F,G\in \mathcal{G}_n$ are distinct, $\|\vartheta_n\sum_{i\in F}e^*_i-\vartheta_n\sum_{i\in G} e^*_i\|\geqslant \vartheta_n$.   Furthermore, if $(F_j)_{j=1}^\infty \subset \mathcal{G}_n$ and $F_j\to F$ in the Cantor topology, then $\vartheta_n \sum_{i\in F_j} e^*_i\underset{\text{weak}^*}{\to}\vartheta_n \sum_{i\in F}e^*_i$.  From this and an easy induction argument it follows that for every ordinal $\eta$, $\{\vartheta_n\sum_{i\in F}e_i^*: F\in \mathcal{G}_n^\eta\}\subset s^\eta_\ee(B_{X^*})$.   In particular, if $CB(\mathcal{G}_n)=\zeta_n+1$, then $0=\vartheta_n\sum_{i\in \varnothing} e^*_i\in s^{\zeta_n}_\ee(B_{X^*})$ and $Sz(X, \ee)\geqslant \zeta_n+1=CB(\mathcal{G}_n)$.

$(ii)$ Part $(i)$ yields that $Sz(B_{X^*})\geqslant  \omega^\xi$.    We focus on the reverse estimate. Let $K_n=\{ \vartheta_n \sum_{i\in F}e^*_i: F\in \mathcal{G}_n\}$ and let $K=\cup_{n=0}^\infty K_n$.   Note that there exists $r>0$ such that  $rB_{X^*}\subset \overline{\text{abs\ co}}^{\text{weak}^*}(K)$ (we may take $r=1/2$ if $\mathbb{K}=\mathbb{R}$ and $r=1/2\sqrt{2}$ if $\mathbb{K}=\mathbb{C}$).   From this it follows that $Sz(X)=Sz(B_{X^*})=Sz(rB_{X^*}) \leqslant Sz(\overline{\text{abs\ co}}^{\text{weak}^*}(K))$.  By the main theorem of \cite{C3}, $Sz(\overline{\text{abs\ co}}^{\text{weak}^*}(K))\leqslant \omega^\xi$ if $Sz(K)\leqslant \omega^\xi$, whence it is sufficient to prove that $Sz(K)\leqslant \omega^\xi$.   Note also that $K$ and $K_n$ are weak$^*$-compact. For any ordinal $\eta$ and any $\ee>0$, $$s^\eta_\ee(K)\subset \{0\}\cup \bigcup_{n=0}^\infty s_\ee^\eta(K).$$   Thus it suffices to show that for any $\ee>0$, $\sup_n Sz(K_n, \ee)<\omega^\xi$.    

We first note that for any $n\in \nn\cup \{0\}$, any $\ee>0$, and any ordinal $\eta$, $$s_\ee^\eta(K_n)\subset \{\vartheta_n \sum_{i\in F}e^*_i: F\in \mathcal{G}_n^\eta\},$$ whence we obtain the estimate $Sz(K_n, \ee)\leqslant CB(\mathcal{G}_n)$.    We now argue that if $n,m\in\nn$ are such that $\vartheta_m<2\ee$ and $m<n$, \begin{enumerate}[(i)]\item if $\xi=\omega^{\zeta+1}$ or $\xi=1$,  $Sz(K_n, \ee)\leqslant CB(\mathcal{G}_m)=(CB(\mathcal{G})-1)^m+1$,   \item if $\xi=\beta+\gamma$ for $\beta, \gamma$ as in the definition of $\xi$-well-constructed, $Sz(K_n, \ee)\leqslant \omega^\gamma+1$.  \end{enumerate}

Then for any $\ee>0$, if $m\in \nn$ is such that $\vartheta_m<2\ee$, we obtain the estimate $$Sz(K_n, \ee)\leqslant  CB(\mathcal{G}_m)<\omega^\xi$$ in the case $\xi=\omega^{\zeta+1}$ or $\xi=1$, and $$Sz(K_n, \ee) \leqslant \max\{\max_{0\leqslant n\leqslant m}CB(\mathcal{G}_m), \omega^\gamma+1\}<\omega^\xi$$ in the remaining case.   These estimates will finish the proof.

We will use the following fact: If $\mathcal{A}[\mathcal{B}]$ are regular families and $E\prec F\in \mathcal{A}[\mathcal{B}]$, then either $E\in \mathcal{A}'[\mathcal{B}]$ or $F\setminus E\in \mathcal{B}$.   Write $F=\cup_{i=1}^k F_i$, $F_1<\ldots <F_k$, $\varnothing \neq F_i\in \mathcal{B}$, $(\min F_i)_{i=1}^l\in \mathcal{A}$.  Then either $F\setminus E\subset F_k$ and therefore $F\setminus E$ lies in $\mathcal{B}$ by heredity, or there exists $0\leqslant l<k$ such that $E=\cup_{i=1}^l (E\cap F_i)$ and $E\cap F_i\neq \varnothing$ for each $1\leqslant i\leqslant l$.    In the second case, since $(\min (E\cap F_i))_{i=1}^l=(\min F_i)_{i=1}^l\prec (\min F_i)_{i=1}^k$, $E=\cup_{i=1}^l E\cap F_i$ witnesses the fact that $E\in \mathcal{A}'[\mathcal{B}]$.

Now in either of the cases $\xi=\omega^{\zeta+1}$ or $\xi=1$, we claim that for any ordinal $\eta$, $$s_\ee^\eta(K_n)\subset \{\vartheta_n \sum_{i\in F} e^*_i: F\in \mathcal{G}_m^\eta[\mathcal{G}_{n-m}]\},$$ which will give the result by taking $\eta=CB(\mathcal{G}_m)$.   In case (ii), we claim that $$s_\ee^\eta(K_n)\subset \{\vartheta_n \sum_{i\in F} e^*_i: F\in \mathcal{F}_n^\eta[\mathcal{G}_0]\},$$ which will give the desired conclusion taking $\eta=\omega^\gamma+1>CB(\mathcal{F}_n)$.    We prove these results by induction on $\eta$, with the $\eta=0$ case  being equality (noting that $\mathcal{G}_n=\mathcal{G}_m[\mathcal{G}_{n-m}]$) and the limit ordinal case being obvious.    Assume the result holds for some $\eta$.  In case (i), let $\mathcal{A}=\mathcal{G}_m^\eta$, $\mathcal{B}=\mathcal{G}_{n-m}$.  In case $(ii)$, let $\mathcal{A}=\mathcal{F}_n^\eta$ and $\mathcal{B}=\mathcal{G}_0$.    We must show that $$s_\ee(\{\vartheta_n \sum_{i\in F} e^*_i: F\in \mathcal{A}[\mathcal{B}]\})\subset \{\vartheta_n\sum_{i\in F}e^*_i: F\in \mathcal{A}'[\mathcal{B}]\},$$ which will complete the induction and the proof.  Now if $\vartheta_n\sum_{i\in E}e^*_i\in s_\ee(\{\vartheta^n \sum_{i\in F} e^*_i: F\in \mathcal{A}[\mathcal{B}]\})$, there exists $F\in \mathcal{A}[\mathcal{B}]$ with $E\prec F$ such that $$\ee/2<\|\vartheta_n \sum_{i\in F\setminus E} e^*_i\|.$$   It suffices to show that $E\in \mathcal{A}'[\mathcal{B}]$.  If $E\notin \mathcal{A}'[\mathcal{B}]$, then $F\setminus E\in \mathcal{B}$.  In case (i), $\vartheta^{n-m}\sum_{i\in F\setminus E} e^*_i\in K_{n-m}\subset B_{X^*}$, whence $$\ee/2 < \vartheta^m \|\vartheta^{n-m}\sum_{i\in F\setminus E} e^*_i\| \leqslant \vartheta^m,$$ a contradiction.  In case (ii), $\sum_{i\in F\setminus E}e^*_i\in K_0\subset B_{X^*}$, whence $$\ee/2 <\vartheta_n  \|\sum_{i\in F\setminus E}e^*_i\|\leqslant \vartheta_n <\vartheta_m,$$ a contradiction.

\end{proof}

\begin{proposition} Suppose $X$ is a Banach space, $Z$ is a subspace of $X$ with $\dim X/Z<\infty$, $K\subset X^*$ is weak$^*$-compact, and $x^*\in s_\ee(K)$.  Then for any $0<\delta<\ee/4$ and any weak$^*$-neighborhood $v$ of $x^*$, there exist $y^*\in K$ and $z\in B_Z$ such that $\text{\emph{Re\ }}y^*(z)>\delta$. 

\label{stang}

\end{proposition}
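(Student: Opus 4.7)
The plan is to argue by contradiction: we shrink $v$ to a weak$^*$-neighborhood $v'\subseteq v$ of $x^*$ with $\text{diam}(v'\cap K)<\ee$, contradicting $x^*\in s_\ee(K)$.

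Suppose, for some weak$^*$-neighborhood $v$ of $x^*$ and some $\delta<\ee/4$, that $\text{Re\ }y^*(z)\leqslant\delta$ for every $y^*\in v\cap K$ and $z\in B_Z$. Since $B_Z$ is closed under multiplication by unimodular scalars, this is equivalent to $\|y^*|_Z\|\leqslant\delta$ for every $y^*\in v\cap K$. Via the canonical isometric identification $X^*/Z^\perp\cong Z^*$ we have $\text{dist}(y^*,Z^\perp)\leqslant\delta$; since $Z^\perp$ is finite dimensional (hence proximinal), we fix for each such $y^*$ a decomposition $y^*=g(y^*)+h(y^*)$ with $g(y^*)\in Z^\perp$, $\|h(y^*)\|\leqslant\delta$, and $\|g(y^*)\|\leqslant R+\delta$, where $R=\sup_K\|\cdot\|$. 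The essential ingredient is that $(R+\delta)B_{Z^\perp}$ is norm-compact.

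Fix $\kappa>0$ with $4\delta+\kappa<\ee$, and pick $\alpha,\eta>0$ with $4\alpha+2\eta<\kappa$. Cover $(R+\delta)B_{Z^\perp}$ by finitely many norm-balls $g_1+\alpha B_{X^*},\ldots,g_N+\alpha B_{X^*}$, so that each $y^*\in v\cap K$ lies within $\alpha+\delta$ of some $g_{i(y^*)}$. Let $J=\{i:\|x^*-g_i\|>\alpha+\delta+\eta\}$, and for each $i\in J$ use Hahn-Banach to choose $x_i\in B_X$ with $\text{Re\ }(x^*-g_i)(x_i)>\alpha+\delta+\eta$. Define the weak$^*$-neighborhood
\[
v'=v\cap\bigcap_{i\in J}\bigl\{y^*\in X^*:\text{Re\ }(y^*-g_i)(x_i)>\alpha+\delta+\eta/2\bigr\}
\]
of $x^*$. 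For any $y^*\in v'\cap K$, if $i(y^*)\in J$ then $\|y^*-g_{i(y^*)}\|\leqslant\alpha+\delta$ would force $\text{Re\ }(y^*-g_{i(y^*)})(x_{i(y^*)})\leqslant\alpha+\delta$, contradicting membership in $v'$. Hence $i(y^*)\notin J$, so $\|x^*-g_{i(y^*)}\|\leqslant\alpha+\delta+\eta$ and consequently $\|y^*-x^*\|\leqslant 2(\alpha+\delta)+\eta$. Thus $\text{diam}(v'\cap K)\leqslant 4(\alpha+\delta)+2\eta<\ee$, giving the desired contradiction.

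The main subtlety is extracting the sharp constant $\ee/4$: the cover radius $\alpha$ and the separation slack $\eta$ must be coupled, and the bound on $\|y^*-x^*\|$ must be routed through $x^*$ itself (not through any single cover center $g_{i_0}$ serving as a proxy for $x^*$), since a naive triangle-inequality chain through such a center yields only $\ee/6$ or worse. The other, more routine, technical point is the Hahn-Banach decomposition $y^*=g(y^*)+h(y^*)$ and the observation that finite-dimensionality of $Z^\perp$ converts the boundedness of $K$ into norm-compactness of the relevant piece of $Z^\perp$.
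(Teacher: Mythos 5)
Your proof is correct, and it in fact delivers the stronger conclusion $y^*\in v\cap K$ (your contradiction hypothesis negates exactly that), which is what the paper actually needs later in Corollary \ref{abandoned} even though the statement only asserts $y^*\in K$. Your route, however, is genuinely different from the paper's. The paper argues directly in the predual: since $x^*\in s_\ee(K)$, there is a net $(x^*_\lambda)\subset K$ converging weak$^*$ to $x^*$ with $\|x^*_\lambda-x^*\|>\ee/2$, witnessed by vectors $x_\lambda\in B_X$; compactness of the unit ball of the finite-dimensional quotient $X/Z$ allows passing to a subnet along which the $x_\lambda$ are almost constant modulo $Z$, so that $\tfrac{1}{2}(x_{\lambda_2}-x_{\lambda_1})$ is within $2\eta$ of some $z\in B_Z$, and evaluating $y^*=x^*_{\lambda_2}\in v\cap K$ at $z$ gives $\text{Re\ }y^*(z)>\ee/4-3R\eta>\delta$. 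You instead work by contradiction on the dual side: if every $y^*\in v\cap K$ has $\|y^*|_Z\|\leqslant \delta$, then $v\cap K$ lies within $\delta$ of the finite-dimensional subspace $Z^\perp\cong (X/Z)^*$, and a finite $\alpha$-net of $(R+\delta)B_{Z^\perp}$ together with finitely many separating vectors $x_i\in B_X$ carves out a smaller weak$^*$-neighborhood $v'$ of $x^*$ on which $K$ has norm diameter at most $4(\alpha+\delta)+2\eta<\ee$, contradicting $x^*\in s_\ee(K)$. Both arguments use $\dim X/Z<\infty$ through compactness — the paper via $B_{X/Z}$ acting on the chosen predual vectors, you via a ball of $Z^\perp$ — and both land on the same threshold $\ee/4$. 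The paper's construction is shorter and exhibits the pair $(y^*,z)$ explicitly; yours isolates a reusable principle (on a bounded set lying uniformly close to a fixed finite-dimensional subspace of $X^*$, weak$^*$-neighborhoods of a point can be refined to control norm diameter) at the modest cost of the bookkeeping coupling $\alpha$ and $\eta$ to $\ee-4\delta$.
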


\begin{proof} Fix $R>1$ such that $K\subset RB_{X^*}$.   We may fix a net $(x^*_\lambda)\subset K$ converging weak$^*$ to $x^*$ and such that $\|x^*_\lambda-x^*\|>\ee/2$ for all $\lambda$. For each $\lambda$, we may fix $x_\lambda\in B_X$ such that $\text{Re\ }(x^*_\lambda-x^*)(x_\lambda)$.   Fix $\eta>0$ such that $\delta+3R\eta <\ee/4$.  After passing to a subnet, we may assume that $|x^*(x_{\lambda_2}-x_{\lambda_1})|<\eta$ and $\|x_{\lambda_2}-x_{\lambda_1}\|_{X/Z}<\eta$ for all $\lambda_1, \lambda_2$.       Now fix any $\lambda_1$ and then choose $\lambda_2$ such that $|(x^*_{\lambda_2}-x^*)(x_{\lambda_1})|<\eta$ and such that $x^*_{\lambda_2}\in v$.      We may now fix $z\in B_Z$ such that $\|\frac{x_{\lambda_2}-x_{\lambda_1}}{2}-z\|<2\eta$ and let $y^*=x^*_{\lambda_2}\in K$.  Now note that \begin{align*} \text{Re\ }y^*(z) & > \text{Re\ }x^*_{\lambda_2}\Bigl(\frac{x_{\lambda_2}-x_{\lambda_1}}{2}\Bigr) - R\eta \\ & > \text{Re\ }(x^*_{\lambda_2}-x^*)\Bigl(\frac{x_{\lambda_2}-x_{\lambda_1}}{2}\Bigr) - 2R\eta \\ & > \text{Re\ }(x^*_{\lambda_2}-x^*)(x_{\lambda_2}/2) - 3R\eta > \ee/4-3R\eta>\delta. \end{align*}

\end{proof}

The following can be compared to Proposition $5$ of \cite{OSZ}.

\begin{corollary} Suppose $\mathcal{G}$ is a regular family with $CB(\mathcal{G})=\xi+1$. If $\textsf{\emph{F}}$ is any FMD for $X$, $K\subset X^*$ is weak$^*$-compact, and $x^*\in s_\ee^\xi(K)$, then for any $0<\delta<\ee/4$, there exist a collection $(x_t)_{t\in \mathcal{G}\setminus MAX(\mathcal{G})}\subset B_X$ and a collection $(x^*_t)_{t\in \mathcal{G}}\subset K$ such that $x^*_\varnothing=x^*$, $x_E\in \text{\emph{span}}\{F_j: j> \max E\}$, and  if $\varnothing\prec E\preceq F\in \mathcal{G}$, then $\text{\emph{Re\ }}x^*_F(x_E)>\delta$.

\label{abandoned}
\end{corollary}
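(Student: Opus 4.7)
The plan is to build the collections recursively along the tree $\mathcal{G}$, processing $E$ in order of increasing $|E|$. For each $E \in \mathcal{G}$ define the Cantor--Bendixson rank $o(E)$ to be the unique ordinal with $E \in \mathcal{G}^{o(E)} \setminus \mathcal{G}^{o(E)+1}$, so that $o(\varnothing) = \xi$, $o(E) = 0$ iff $E \in MAX(\mathcal{G})$, and every derived set $\mathcal{G}^\alpha$ remains hereditary and spreading; from spreading it follows that whenever $E = E^- \cup \{n\}$ is a non-root element of $\mathcal{G}$, one has $o(E^-) \geq o(E) + 1$ (since any spread of a member of $\mathcal{G}^{o(E)}$ stays in $\mathcal{G}^{o(E)}$, producing infinitely many siblings of $E$ that force $E^-$ into $\mathcal{G}^{o(E)+1}$). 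Fix $\delta' \in (\delta, \ee/4)$, $R \geq 1$ with $K \subset R B_{X^*}$, and the underlying $M$-basis $(x_i, x^*_i)$ and indices $0 = k_0 < k_1 < \cdots$ witnessing that $\textsf{F}$ is an FMD; write $Z_E := \bigcap_{i \leq k_{\max E}} \ker(x^*_i)$, which has finite codimension in $X$, and note that biorthogonality together with the density of $[x_i : i \in \nn]$ in $X$ allows us to approximate any $z \in Z_E$ in norm arbitrarily well by a vector in $\text{span}\{F_j : j > \max E\}$.

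I will maintain the invariants (I1) $x^*_E \in s_\ee^{o(E)}(K)$ and (I2) $\text{Re } x^*_E(x_{E'}) > \delta$ for every non-maximal $E'$ with $\varnothing \prec E' \preceq E$, the second of which, ranging over all $F$ in place of $E$, is exactly the conclusion of the corollary. Begin by setting $x^*_\varnothing = x^*$ (so (I1) holds by hypothesis and (I2) is vacuous) and taking $x_\varnothing$ arbitrary in $B_X$, since it appears in no condition. For the recursive step fix a non-root $E \in \mathcal{G}$ with parent $E^-$; by (I1) at $E^-$ and $o(E^-) \geq o(E)+1$,
\[
x^*_{E^-} \in s_\ee^{o(E^-)}(K) \subset s_\ee^{o(E)+1}(K) = s_\ee\bigl(s_\ee^{o(E)}(K)\bigr),
\]
so Proposition \ref{stang} applies to the weak$^*$-compact set $s_\ee^{o(E)}(K)$ in place of $K$, to the finite-codimensional $Z_E$, and to the weak$^*$-open neighborhood
\[
v_E := \bigcap \big\{\{y^* : \text{Re } y^*(x_{E'}) > \delta\} : \varnothing \prec E' \prec E,\ E' \text{ non-maximal}\big\}
\]
of $x^*_{E^-}$ (a neighborhood by (I2) at $E^-$, noting that $E^-$ itself lies in the indexing set when non-maximal). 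This yields $y^* \in s_\ee^{o(E)}(K) \cap v_E$ and $z \in B_{Z_E}$ with $\text{Re } y^*(z) > \delta'$. Set $x^*_E := y^*$, and if $E$ is non-maximal define $x_E := z'/\max(1,\|z'\|)$, where $z' \in \text{span}\{F_j : j > \max E\}$ is a norm-approximation of $z$ within error $\eta > 0$ chosen small enough that $R\eta < \delta' - \delta$. Then $x_E \in B_X \cap \text{span}\{F_j : j > \max E\}$ and $\text{Re } x^*_E(x_E) > \delta$, so (I1) and (I2) at $E$ both follow: (I2) for $E' \prec E$ from $y^* \in v_E$ and for $E' = E$ from the preceding inequality.

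The main technical subtlety is the tight budgeting of Szlenk-derivative levels: each non-root node consumes exactly one derivative of $K$ via Proposition \ref{stang}, and the longest chain in $\mathcal{G}$ has length $\xi+1$, so the hypothesis $x^* \in s_\ee^\xi(K)$ provides precisely the right budget; for this reason, Proposition \ref{stang} is not applied at the root but only at each non-root node, starting from the children of the root, and the self-pairing case $E' = E$ of (I2) is handled by updating $x^*_E$ to the new $y^*$ produced at $E$ rather than inheriting the parent's functional. The other technical point, passing from the vector $z \in B_{Z_E}$ (all that Proposition \ref{stang} provides) to the vector $x_E$ in the linear span $\text{span}\{F_j : j > \max E\}$ required by the conclusion, is standard: by biorthogonality any norm-approximation of $z$ has small coefficients on $x_1, \ldots, x_{k_{\max E}}$, so truncation yields the desired approximant, and the slack $\delta' - \delta$ absorbs the resulting perturbation in the pairing with $y^*$ as well as with every $y^*_{F}$ chosen at later nodes $F \succ E$.
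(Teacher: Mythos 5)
Your proposal is correct and is essentially the paper's own argument: you define the same rank function ($o=\mu$), maintain the same invariant $x^*_E\in s^{o(E)}_\ee(K)$, and at each non-root node apply Proposition \ref{stang} to $s^{o(E)}_\ee(K)$ with the weak$^*$-neighborhood encoding the previously chosen vectors and a finite-codimensional tail subspace, then approximate within $\text{span}\{F_j:j>\max E\}$. The only differences are cosmetic (the explicit slack $\delta<\delta'<\ee/4$ with the kernel description $Z_E=\bigcap_{i\leqslant k_{\max E}}\ker x^*_i$ versus the paper's direct use of $Z=[F_j:j>\max E]$ and strictness of the inequality), so no further comment is needed.
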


\begin{proof} Define $\mu:\mathcal{G}\to [0, \xi]$ by letting $\mu(E)=\max\{\zeta: E\in \mathcal{G}^\zeta\}$.  We will define $(x_E)_{E\in \mathcal{G}\setminus MAX(\mathcal{G})}$ and $(x^*_E)_{E\in \mathcal{G}}$ recursively to have each of the properties mentioned in the corollary, and to have the property that for each $E\in \mathcal{G}$, $x^*_E\in s^{\mu(E)}_\ee(K)$.  We let $x^*_\varnothing=x^*$.

Now suppose that $\varnothing\prec E\in \mathcal{G}$,  $x^*_{E^-}\in s^{\mu(E^-)}_\ee(K)$, and $x_G\in \text{span}(F_j:j\in\nn)$ have been defined for each $\varnothing\prec G\prec E$.  If $E^-=\varnothing$, let $v=X^*$, and otherwise let $v=\{y^*\in Y^*:(\forall \varnothing\prec G\prec E)(\text{Re\ }y^*(x_G)>\delta)\}$, which is a weak$^*$-neighborhood of $x^*_{E^-}$.   Let $p=\max E$ and $Z=[F_j:j> p]$.  Note that  $\mu(E)<\mu(E^-)$. Since $x^*_{E^-}\in s^{\mu(E^-)}_\ee(K)\subset s^{\mu(E)+1}_\ee(K)=s_\ee(s_\ee^{\mu(E)}(K))$, Proposition \ref{stang} yields the existence of $z\in B_Z$ and $x^*_E\in s^{\mu(E)}_\ee(K)\cap v$ such that $\text{Re\ }x^*_E(z)>\delta$.   By density of $\text{span}\{F_j:j>p\}$ in $[F_j:j>p]$, we may fix $x_E\in B_X\cap \text{span}\{F_j:j>p\}$ such that $\text{Re\ }x^*_E(x_E)>\delta$. This completes the recursive construction, and the collections $(x^*_E)_{E\in \mathcal{G}}$, $(x_E)_{E\in \mathcal{G}\setminus MAX(\mathcal{G})}$ are easily seen to satisfy the conclusions.

\end{proof}

Now for a Banach space $X$, an FMD $\textsf{F}$ of $X$, $K\subset X^*$ weak$^*$-compact, and $\ee>0$, let $\mathcal{H}(X,\textsf{F},K, \ee)=\varnothing$ if $K=\varnothing$, and otherwise let $\mathcal{H}(X,\textsf{F},K, \ee)$ denote the collection consisting of $\varnothing$ together with all $(k_i)_{i=1}^n\in [\nn]^{<\nn}$ such that (with $k_0=0$), there exist $x^*\in K$ and $(u_i)_{i=1}^n\in B_X\cap \prod_{i=1}^n [F_j: k_{i-1}<j\leqslant k_i]$ such that $|x^*(u_i)|\geqslant \ee$ for all $1\leqslant i\leqslant n$ (equivalently, such that $\text{Re\ }x^*(u_i)\geqslant \ee$ for all $1\leqslant i\leqslant n$).

\begin{lemma} For any Banach space $X$, any weak$^*$-compact subset $K$ of $X^*$, any $K$-shrinking FMD $\textsf{\emph{F}}$ of $X$,  $0<\delta<\ee$ and any ordinal $\xi$, $$\mathcal{H}(X, \textsf{\emph{F}}, K, \ee)^{2\xi}\subset \mathcal{H}(X, \textsf{\emph{F}}, s^\xi_\delta(K), \ee).$$   In particular, if $\mathcal{H}(X, \textsf{\emph{f}}, K, \ee)^{2\xi}\neq \varnothing$, then $s^\xi_\delta(K)\neq \varnothing$.

\label{geoff}
\end{lemma}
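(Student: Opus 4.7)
The plan is to prove the containment by transfinite induction on $\xi$, with the $\xi=0$ case being the trivial equality.  Two preliminary observations will be used throughout.  First, for any weak$^*$-compact $L\subset X^*$ the family $\mathcal{H}(X,\textsf{F},L,\ee)$ is hereditary: deleting a coordinate from a witnessed sequence $(p_1,\ldots,p_n)$ only enlarges the span in which one of the witness vectors is required to live, so the remaining witnesses patch together.  Second, the Cantor--Bendixson derivative of a hereditary subset of $[\nn]^{<\nn}$ is itself hereditary, so $E=(p_1,\ldots,p_n)\in\mathcal{H}(X,\textsf{F},L,\ee)''$ unfolds into the statement that for arbitrarily large $k$ and, for each such $k$, for arbitrarily large $l>k$, the set $E\cup\{k,l\}$ belongs to $\mathcal{H}(X,\textsf{F},L,\ee)$.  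The ``in particular'' assertion is then immediate from the main containment, since $s_\delta^\xi(K)=\varnothing$ would force $\mathcal{H}(X,\textsf{F},s_\delta^\xi(K),\ee)=\{\varnothing\}$.

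The heart of the argument is the successor step, which reduces (via monotonicity $\mathcal{A}\subset\mathcal{B}\Rightarrow\mathcal{A}''\subset\mathcal{B}''$ and the induction hypothesis applied to $L=s_\delta^\xi(K)$) to proving that for any weak$^*$-compact $L\subset X^*$,
$$\mathcal{H}(X,\textsf{F},L,\ee)''\subset\mathcal{H}(X,\textsf{F},s_\delta(L),\ee).$$
Given $E\in\mathcal{H}(X,\textsf{F},L,\ee)''$, the crucial step is to \emph{interleave} the two extensions: I would recursively choose $p_n<k_1<l_1<k_2<l_2<\ldots$ with $E\cup\{k_m\}\in\mathcal{H}(X,\textsf{F},L,\ee)'$ and $E\cup\{k_m,l_m\}\in\mathcal{H}(X,\textsf{F},L,\ee)$ for every $m$, which the unfolding of the double derivative permits.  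Fixing witnesses $x^*_m\in L$ and $u_i^m$ ($i\leqslant n+2$) with the required support and size constraints, the interleaving forces the intervals $(k_m,l_m]$ to be pairwise disjoint, so $(u_{n+2}^m)_{m=1}^\infty$ is a bounded block sequence with respect to $\textsf{F}$.  Since the $M$-basis underlying $\textsf{F}$ is countable, $X$ is separable and $L$ is weak$^*$-metrizable; combined with norm compactness of the finite-dimensional balls $B_X\cap[F_j:p_{i-1}<j\leqslant p_i]$, I pass to a subsequence along which $u_i^m\to u_i$ in norm for $i\leqslant n$ and $x^*_m\to x^*$ weak$^*$ for some $x^*\in L$; a standard joint-continuity estimate yields $|x^*(u_i)|\geqslant\ee$ for each such $i$.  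To verify $x^*\in s_\delta(L)$, I invoke the $K$-shrinking hypothesis: since $L\subset K$, the bounded block sequence $(u_{n+2}^m)$ is $\sigma(X,L)$-null, giving $x^*(u_{n+2}^m)\to 0$; combined with $|x^*_m(u_{n+2}^m)|\geqslant\ee$ this forces $\|x^*_m-x^*\|>\delta$ for all sufficiently large $m$, and therefore every weak$^*$-neighborhood of $x^*$ intersects $L$ in a set of diameter exceeding $\delta$.

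For a limit ordinal $\xi$, since $2\xi$ is also a limit ordinal, $\mathcal{H}(X,\textsf{F},K,\ee)^{2\xi}=\bigcap_{\zeta<\xi}\mathcal{H}(X,\textsf{F},K,\ee)^{2\zeta}$, and by induction this lies in $\bigcap_{\zeta<\xi}\mathcal{H}(X,\textsf{F},s_\delta^\zeta(K),\ee)$; for $E$ in the intersection I collect witnesses $(x^*_\zeta;u_i^\zeta)$ and extract a subnet (using finite-dimensional norm compactness of each $B_X\cap[F_j:p_{i-1}<j\leqslant p_i]$ and weak$^*$-compactness of $K$) with $u_i^\zeta\to u_i$ in norm and $x^*_\zeta\to x^*$ weak$^*$, and weak$^*$-closedness of each $s_\delta^\zeta(K)$ places $x^*\in s_\delta^\xi(K)$.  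The principal obstacle I anticipate is the successor step: one must realize that the natural choice $u_{n+1}^m$ lives in intervals $(p_n,k_m]$ which all share the starting point $p_n$ and therefore do not form a block sequence, so the interleaving $k_1<l_1<k_2<l_2<\ldots$ is essential in order to instead use $u_{n+2}^m$, which occupies disjoint intervals and is genuinely amenable to the $K$-shrinking hypothesis.
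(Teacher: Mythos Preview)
Your proposal is correct and follows essentially the same approach as the paper: transfinite induction with the key interleaving $k_1<l_1<k_2<l_2<\ldots$ in the successor step so that the $(n+2)$-nd witnesses form a genuine block sequence to which the $K$-shrinking hypothesis applies, and a compactness/subnet argument in the limit step. The only structural difference is cosmetic: you factor the successor step as first applying the inductive hypothesis and monotonicity to reduce to $\mathcal{H}(X,\textsf{F},L,\ee)''\subset\mathcal{H}(X,\textsf{F},s_\delta(L),\ee)$ for $L=s^\xi_\delta(K)$, whereas the paper unfolds the double derivative of $\mathcal{H}(X,\textsf{F},K,\ee)^{2\xi}$ first and then invokes the inductive hypothesis on the extended sequences; the underlying analysis is identical.
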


\begin{proof} In the proof, we will repeatedly use the fact that for a weak$^*$-compact subset $L$ of $X^*$, $\mathcal{H}(X, \textsf{F}, L, \ee)\neq \varnothing$ if and only if $L\neq \varnothing$ if and only if $\varnothing\in \mathcal{H}(X, \textsf{F}, L, \ee)$.

We induct on $\xi$.    The $\xi=0$ case is trivial.

Assume $\xi$ is a limit ordinal and the result holds for all $\zeta<\xi$.  Note that by the properties of ordinals, $2\xi=\xi$ and $2\zeta<\xi$ for every $\zeta<\xi$.    Suppose that  for some $n\in \nn\cup \{0\}$, $$(k_i)_{i=1}^n\in \mathcal{H}(X, \textsf{F}, K, \ee)^{2\xi}=\mathcal{H}(X, \textsf{F}, K, \ee)^{\xi}=\bigcap_{\zeta<\xi} \mathcal{H}(X, \textsf{F}, K, \ee)^{2\zeta}.$$  Here, if $n=0$, $(k_i)_{i=1}^0$ denotes the empty sequence by convention.   

If $n>0$, then for every $\zeta<\xi$, we may fix $x^*_\zeta\in s^\zeta_\delta(K)$ and $(u^\zeta_i)_{i=1}^n\in B_X\cap \prod_{i=1}^n [F_j: k_{i-1}<j\leqslant k_j]=:C$ such that for each $1\leqslant i\leqslant n$, $\text{Re\ }x^*_\zeta(u^\zeta_i)\geqslant \ee$.  If $C$ is endowed with the product of the norm topology and $K$ is endowed with its weak$^*$-topology, by compactness of $C\times K$, we may fix $$(u_1, \ldots, u_n, x^*)\in \bigcap_{\eta<\xi}\overline{\{(u^\zeta_1, \ldots, u^\zeta_n, x^*_\zeta): \eta<\zeta<\xi \}}.$$   Obviously $x^*\in s^\xi_\delta(K)$ and $\text{Re\ }x^*(u_i)\geqslant \ee$ for each $1\leqslant i\leqslant n$, witnessing that $(k_i)_{i=1}^n\in \mathcal{H}(X, \textsf{F}, s^\xi_\delta(K), \ee)$.    If $n=0$, we omit reference to $u^\zeta_i$, $u_i$, and $C$ in the previous argument and use the fact at the beginning of the proof to deduce that $(k_i)_{i=1}^0=\varnothing\in \mathcal{H}(X, \textsf{F}, s^\xi_\delta(K), \ee)$.

Assume the result holds for $\xi$ and $ (k_i)_{i=1}^n\in \mathcal{H}(X, \textsf{F}, K, \ee)^{2(\xi+1)}= \mathcal{H}(X, \textsf{F}, K, \ee)^{2\xi+2}$.  First suppose $n>0$.  Then there exists a sequence $l_1<m_1<l_2<m_2<\ldots$ such that for all $t\in \nn$, $(k_i)_{i=1}^n\smallfrown (l_t, m_t)\in \mathcal{H}(X, \textsf{F}, K, \ee)^{2\xi}$. By the inductive hypothesis, for each $t\in \nn$, there exist $x^*_t\in s^\xi_\delta(K)\subset K$,  $(u^t_i)_{i=1}^n\in B_X\cap \prod_{i=1}^n [F_j: k_{i-1}<j\leqslant k_i]=:C$, and $v_t\in B_X\cap [E_j:l_t<j\leqslant m_t]$ such that $\text{Re\ }x^*_t(v_t)\geqslant \ee$ and for each $1\leqslant i\leqslant n$, $\text{Re\ }x^*_t(v_t)\geqslant \ee$.   We may pass to a subsequence and use the sequential compactness of $C$ with the product of its norm topology and $K$ with its weak$^*$-topology to assume $u^t_i\to u_i$  and $x^*_t\underset{\text{weak}^*}{\to} x^*$. Obviously $\text{Re\ }x^*(u_i)\geqslant \ee$ for all $1\leqslant i\leqslant n$.    Since $\textsf{F}$ is $K$-shrinking and $v_t\in B_X\cap [E_j:l_t<j\leqslant m_t]$, $$\underset{t}{\lim\inf} \|x^*_t-x^*\| \geqslant \underset{t}{\lim\inf} \text{Re\ }(x^*_t-x^*)(v_t)\geqslant \ee>\delta.$$   Since $(x^*_t)_{t=1}^\infty \subset s^\xi_\delta(K)$, $x^*\in s^{\xi+1}_\delta(K)$.    This yields that $(k_i)_{i=1}^n\in \mathcal{H}(X, \textsf{F}, s^{\xi+1}_\delta(K), \ee)$.  If $n=0$, we omit reference to $u^t_i$ and $u_i$ in the previous argument and use the remark at the beginning of the proof to deduce that $(k_i)_{i=1}^0=\varnothing\in \mathcal{H}(K, \textsf{F}, s^{\xi+1}_\delta, \ee)$.

\end{proof}

\begin{corollary}  If $X$ is a Banach space, $K\subset X^*$ is weak$^*$-compact,  $\textsf{\emph{F}}$ is a $K$-shrinking FDD for $X$, then for any $\ee>0$, $$Sz(K, 5\ee)\leqslant CB(\mathcal{H}(X, \textsf{\emph{F}}, K, \ee)) \leqslant 2Sz(K, \ee/2).$$

In particular, if $K$ is convex and not norm compact, $$Sz(K)=\sup_{\ee>0} CB(\mathcal{H}(X, \textsf{\emph{F}}, K, \ee)).$$   

\label{bad}

\end{corollary}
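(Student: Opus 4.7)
The plan is to handle the two inequalities separately and then derive the ``in particular'' statement.

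For the upper bound $CB(\mathcal{H}(X, \textsf{F}, K, \ee)) \leqslant 2Sz(K, \ee/2)$, the argument is immediate from Lemma \ref{geoff}. Set $\xi = Sz(K, \ee/2)$, so by definition $s^\xi_{\ee/2}(K) = \varnothing$. Taking $\delta = \ee/2 < \ee$ in Lemma \ref{geoff} yields $\mathcal{H}(X, \textsf{F}, K, \ee)^{2\xi} \subseteq \mathcal{H}(X, \textsf{F}, s^\xi_{\ee/2}(K), \ee) = \mathcal{H}(X, \textsf{F}, \varnothing, \ee) = \varnothing$, whence $CB(\mathcal{H}(X, \textsf{F}, K, \ee)) \leqslant 2\xi$.

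The lower bound $Sz(K, 5\ee) \leqslant CB(\mathcal{H}(X, \textsf{F}, K, \ee))$ is the heart of the proof. I would prove by transfinite induction on $\xi$ the following strengthened statement: for every $n \geqslant 0$, every $(k_i)_{i=1}^n \in [\nn]^{<\nn}$, every $x^* \in s^\xi_{5\ee}(K)$, and every $(u_i)_{i=1}^n \in B_X \cap \prod_{i=1}^n [F_j: k_{i-1} < j \leqslant k_i]$ satisfying $\text{Re\ }x^*(u_i) > \ee$ for $1 \leqslant i \leqslant n$, one has $(k_i)_{i=1}^n \in \mathcal{H}(X, \textsf{F}, K, \ee)^\xi$. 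Applied with $n = 0$ and vacuous witness conditions, this yields $\varnothing \in \mathcal{H}(X, \textsf{F}, K, \ee)^\xi$ for every $\xi < Sz(K, 5\ee)$, giving the desired inequality. The base case $\xi = 0$ reduces to the definition of $\mathcal{H}$, and limit ordinals are routine since both derivations are defined by intersections.

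The successor step $\xi \to \xi + 1$ is where Proposition \ref{stang} is invoked, and it is the main technical hurdle. Given $x^* \in s_{5\ee}(L)$ with $L = s^\xi_{5\ee}(K)$ and a witness $(k_i), (u_i)$ as above, apply Proposition \ref{stang} to the finite-codimensional subspace $Z = [F_j: j > k_n]$ of $X$, together with the weak$^*$-neighborhood $v = \{\ell^*: \text{Re\ }\ell^*(u_i) > \ee \text{ for } 1 \leqslant i \leqslant n\}$ of $x^*$ (open by strict inequality) and any $\delta \in (\ee, 5\ee/4)$, which is possible since $1 < 5/4$. The proposition produces $y^* \in L \cap v$ and $z \in B_Z$ with $\text{Re\ }y^*(z) > \delta > \ee$. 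Approximating $z$ in norm by finitely supported $z' \in B_X \cap [F_j: k_n < j \leqslant m]$ while preserving $\text{Re\ }y^*(z') > \ee$, which is available for all sufficiently large $m$, the inductive hypothesis applied to the extended witness $y^*, u_1, \ldots, u_n, z'$ places $(k_1, \ldots, k_n, m) \in \mathcal{H}(X, \textsf{F}, K, \ee)^\xi$. As $m$ ranges over arbitrarily large values, these extensions converge to $(k_i)_{i=1}^n$ in the Cantor topology, so $(k_i)_{i=1}^n \in (\mathcal{H}(X, \textsf{F}, K, \ee)^\xi)' = \mathcal{H}(X, \textsf{F}, K, \ee)^{\xi+1}$. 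The delicate point is the coordinated choice of $v$, $\delta$, and the norm-approximation of $z$ so that every relevant inequality is preserved; the factor $5$ in $5\ee$ is essentially forced by the $\ee/4$ constraint of Proposition \ref{stang}, which demands the initial Szlenk slicing parameter to exceed $4\ee$ in order for $\delta$ to be chosen strictly larger than $\ee$.

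For the ``in particular'' statement, convexity of $K$ combined with classical facts on the Szlenk index of convex weak$^*$-compact sets yields that $Sz(K) \in \{\infty\} \cup \{\omega^\eta: \eta \geqslant 0\}$, and the non-norm compactness of $K$ ensures $Sz(K) \geqslant \omega$, so $Sz(K) = \omega^\eta$ for some $\eta \geqslant 1$. The ordinal identity $2 \cdot \omega^\eta = \omega^\eta$ then gives $2 \cdot Sz(K) = Sz(K)$. Taking $\sup_{\ee > 0}$ in the two inequalities already established and using that left-multiplication by $2$ commutes with suprema of ordinals, one obtains $Sz(K) = \sup_\ee Sz(K, 5\ee) \leqslant \sup_\ee CB(\mathcal{H}(X, \textsf{F}, K, \ee)) \leqslant \sup_\ee 2Sz(K, \ee/2) = 2 \cdot Sz(K) = Sz(K)$, closing the sandwich.
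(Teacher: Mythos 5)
Your upper bound argument (via Lemma \ref{geoff} with $\delta=\ee/2$) is exactly what the paper intends. For the lower bound, however, you take a different route: the paper obtains $Sz(K,5\ee)\leqslant CB(\mathcal{H}(X,\textsf{F},K,\ee))$ by invoking Corollary \ref{abandoned}, which produces a tree $(x_E)$, $(x^*_E)$ indexed by a regular family $\mathcal{G}$ with $CB(\mathcal{G})=\xi+1$, and one must then recursively thin the tree (as in the proof of Lemma \ref{sprites}) to obtain genuine block witnesses showing $\mathcal{G}'\subset\mathcal{H}$ for some regular $\mathcal{G}'$ of the same rank. You instead run the transfinite induction directly on the Szlenk derivative, using Proposition \ref{stang} at each successor stage to extend the block witness $(u_i)$ by a new vector $z'$ supported in a higher block, converging on $(k_i)_{i=1}^n$ in the Cantor topology as $m\to\infty$. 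This inlines the content of Corollary \ref{abandoned} rather than citing it; the trade-off is that your proof is more self-contained and makes the tree-to-derivative translation explicit, at the cost of reproving the combinatorial work that Corollary \ref{abandoned} already encapsulates. Both are essentially the same inductive idea. Note that Proposition \ref{stang} as stated omits $y^*\in v$ from its conclusion, but the proof establishes it and the paper uses it in exactly the way you do, so your reliance on $y^*\in L\cap v$ is consistent with the paper's intent. For the ``in particular'' clause, your continuity-of-left-multiplication argument ($\sup_\ee 2\,Sz(K,\ee/2)=2\,Sz(K)=Sz(K)$) is a clean alternative to the paper's pointwise bound $2\,Sz(K,\ee/2)<\omega^\xi$.

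One small gap: your ``in particular'' argument jumps from ``$Sz(K)\in\{\infty\}\cup\{\omega^\eta\}$ and $Sz(K)\geqslant\omega$'' to ``$Sz(K)=\omega^\eta$ for some $\eta\geqslant1$,'' silently discarding the case $Sz(K)=\infty$. The paper handles this case separately first: if $Sz(K)=\infty$ then $Sz(K,5\ee_0)=\infty$ for some $\ee_0>0$, and the lower bound (which your induction does establish for every ordinal $\xi$) forces $CB(\mathcal{H}(X,\textsf{F},K,\ee_0))=\infty$, so both sides are $\infty$. This is easy to add, but as written your argument does not cover it.
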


\begin{proof} The proof of the first part follows from Corollary \ref{abandoned} and  Lemma \ref{geoff}. The second part follows from the fact that if $K$ is convex and not norm compact, either $Sz(K)=\infty=\sup_{\ee>0} CB(\mathcal{H}(X, \textsf{F}, K, \ee))$, and otherwise $Sz(K)=\omega^\xi$ for some $0<\xi<\omega_1$. In this case, for each $\ee>0$, $2Sz(K,\ee/2)<\omega^\xi$, so $$\omega^\xi =\sup_{\ee>0} Sz(K, 5\ee) \leqslant \sup_{\ee>0} CB(\mathcal{H}, \textsf{F}, K, \ee) \leqslant \sup_{\ee>0} 2Sz(K, \ee/2)=\omega^\xi.$$

\end{proof}

We next prove a generalization of a result of Schlumprecht, which was shown in the case $K=B_{X^*}$.

\begin{lemma} Suppose $X$ is   a Banach space $X$, $K\subset X^*$ is weak$^*$-compact, $\textsf{\emph{F}}$ is a $K$-shrinking FMD for $X$, and $0<\xi<\omega_1$. Then $Sz(K)\leqslant \omega^\xi$ if and only if for any $\ee>0$ and any $L\in [\nn]$, there exists $M\in [L]$ such that $$\sup \Bigl\{\Bigl\langle\sum_{j=1}^\infty \mathbb{S}^\xi_{N,1}(n_j)e_j\Bigr\rangle_{X, \textsf{\emph{F}}, K, N}:N\in [M] \Bigr\} \leqslant \ee.$$

\label{sprites}

\end{lemma}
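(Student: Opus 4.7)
The plan is to prove both implications. For the forward direction ($Sz(K)\leqslant \omega^\xi$ implies the averaging bound), I combine Corollary \ref{bad} to translate the Szlenk bound into a Cantor--Bendixson bound on $\mathcal{H}(X,\textsf{F},K,\delta)$, Theorem \ref{gasp} to move from this merely hereditary family to a regular over-family of comparable rank, and Proposition \ref{CN}(ii) to bound repeated averages on that regular family. For the backward direction, I prove the contrapositive: from a point of $s^{\omega^\xi}_\delta(K)$, Corollary \ref{abandoned} yields an $\mathcal{S}_\xi$-indexed tree of blocks and functionals, which I thread through a recursively built $N \in [M]$.

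Forward direction. Fix $\ee>0$, $L\in[\nn]$, and $R>0$ with $K\subset RB_{X^*}$, and set $\delta=\ee/2$. Since $\omega^\xi$ is a limit ordinal for $\xi\geqslant 1$, a weak$^*$-compactness argument (a decreasing net of nonempty weak$^*$-compact sets has nonempty intersection) forces $Sz(K,\delta/2)<\omega^\xi$. Corollary \ref{bad} (or Lemma \ref{geoff} directly) then gives $CB(\mathcal{H})<\omega^\xi$ for the hereditary family $\mathcal{H}:=\mathcal{H}(X,\textsf{F},K,\delta)$. I pick a regular family $\mathcal{G}_0$ (take $\mathcal{S}_\zeta$ for some $\zeta<\xi$ when $\xi$ is a limit, or $\mathcal{A}_n[\mathcal{S}_\zeta]$ with $n$ large when $\xi=\zeta+1$) such that $CB(\mathcal{H})<CB(\mathcal{G}_0)\leqslant \omega^\xi$, and Theorem \ref{gasp} yields $N_1\in[L]$ with $\mathcal{H}\cap [N_1]^{<\nn}\subset \mathcal{G}_0$. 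Proposition \ref{CN}(ii) then provides $M\in[N_1]$ with $\sup\{\mathbb{S}^\xi_{P,1}(E):E\in\mathcal{G}_0, P\in[M]\}\leqslant \ee/(2R)$. For $N=(n_i)\in[M]$, any $x^*\in K$, and any $(x_i)\in B_X^\nn$ with $x_i\in [F_j:n_{i-1}<j\leqslant n_i]$, the set $G:=\{n_i:\text{Re\ }x^*(x_i)>\delta\}$ lies in $\mathcal{H}\cap [N_1]^{<\nn}\subset \mathcal{G}_0$, so
\[ \sum_i \mathbb{S}^\xi_{N,1}(n_i)\,\text{Re\ }x^*(x_i) \leqslant R\,\mathbb{S}^\xi_{N,1}(G) + \delta \leqslant R\cdot\tfrac{\ee}{2R}+\tfrac{\ee}{2}=\ee. \]
Taking the maximum over $x^*$ and over such configurations gives the averaged seminorm bound.

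Backward direction (contrapositive). Assume $Sz(K)>\omega^\xi$, pick $\delta>0$ with $s^{\omega^\xi}_\delta(K)\neq \varnothing$, and fix $0<\eta<\delta/4$. Applying Corollary \ref{abandoned} with $\mathcal{S}_\xi$ (which has $CB(\mathcal{S}_\xi)=\omega^\xi+1$) to any $x^*\in s^{\omega^\xi}_\delta(K)$ produces $(x_E)_{E\in\mathcal{S}_\xi\setminus MAX(\mathcal{S}_\xi)}\subset B_X$ and $(x^*_E)_{E\in\mathcal{S}_\xi}\subset K$ with $x_E\in \text{span}\{F_j:j>\max E\}$ and $\text{Re\ }x^*_F(x_E)>\eta$ whenever $\varnothing\prec E\preceq F$. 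For any $M\in[L]$, I recursively build $N=(n_i)\in[M]$: having chosen $(n_1,\ldots,n_{j-1})\in\mathcal{S}_\xi\setminus MAX(\mathcal{S}_\xi)$, I select $n_j\in M$ large enough both to exceed $\max\text{supp}(x_{(n_1,\ldots,n_{j-1})})$ and to preserve $(n_1,\ldots,n_j)\in\mathcal{S}_\xi$ (possible by spreading of $\mathcal{S}_\xi$). Compactness of $\mathcal{S}_\xi$ ensures termination at some $k$ with $F:=(n_1,\ldots,n_k)\in MAX(\mathcal{S}_\xi)$; complete $N$ with any increasing tail in $M$. Setting $y_j:=x_{(n_1,\ldots,n_{j-1})}\in B_X\cap [F_l:n_{j-1}<l\leqslant n_j]$ and testing against $x^*_F$ gives
\[ \Bigl\langle \sum_{j=1}^\infty \mathbb{S}^\xi_{N,1}(n_j)e_j\Bigr\rangle_{X,\textsf{F},K,N} \geqslant \sum_j \mathbb{S}^\xi_{N,1}(n_j)\,\text{Re\ }x^*_F(y_j) > \eta, \]
contradicting the averaging hypothesis for any $\ee<\eta$.

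The main obstacle is verifying that $Sz(K,\delta/2)<\omega^\xi$ is strict, which is what allows Gasparis to deliver a regular over-family of rank still $\leqslant\omega^\xi$; this ultimately hinges on $\omega^\xi$ being a limit ordinal together with weak$^*$-compactness of the derivations. Beyond that, the forward direction is a careful accounting of the splitting estimate, and the backward direction is a standard tree-threading recursion that uses the regularity of $\mathcal{S}_\xi$ to juggle the three constraints $n_j\in M$, $n_j$ large relative to the block's support, and $(n_1,\ldots,n_j)\in\mathcal{S}_\xi$ simultaneously.
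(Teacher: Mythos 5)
Your forward direction (the implication $Sz(K)\leqslant\omega^\xi$ $\Rightarrow$ averaging bound) takes a genuinely different route from the paper's. The paper proves this direction by first applying the infinite Ramsey theorem to the closed set $\mathcal{V}$ of "good" infinite sequences, then using Lemma \ref{Schlumprecht} on a function $f$ built from witnesses to derive a contradiction with the $CB$ estimate. You instead observe (using that $\omega^\xi$ is a limit ordinal for $\xi\geqslant 1$) that $Sz(K,\delta/2)<\omega^\xi$, apply Lemma \ref{geoff} to obtain $CB(\mathcal{H}(X,\textsf{F},K,\delta))<\omega^\xi$, push $\mathcal{H}$ into a regular over-family $\mathcal{G}_0$ with $CB(\mathcal{G}_0)\leqslant\omega^\xi$ via Gasparis's Theorem \ref{gasp}, and finish with Proposition \ref{CN}(ii). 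This is a direct construction of the desired $M$, avoiding both the Ramsey closedness argument and the tree-extraction lemma; the splitting estimate is carried out correctly. Citing Lemma \ref{geoff} rather than Corollary \ref{bad} is the right call, since the latter is stated only for FDDs while the hypothesis here is an FMD.

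There is, however, a concrete gap in your backward direction. You set $y_j:=x_{(n_1,\ldots,n_{j-1})}$, but for $j=1$ this is $x_\varnothing$, which Corollary \ref{abandoned} does not construct: the tree only produces $x_E$ for $\varnothing\prec E$, and the displayed conclusion of that corollary involves $\text{Re\ }x^*_F(x_E)$ only for $\varnothing\prec E\preceq F$. Putting $y_1=0$ (as the paper does), the lower bound you obtain is
\[
\sum_{j\geqslant 2}\mathbb{S}^\xi_{N,1}(n_j)\,\text{Re\ }x^*_F(y_j) > \eta\bigl(1-\mathbb{S}^\xi_{N,1}(n_1)\bigr),
\]
not $\eta$. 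Nothing in your recursion controls $\mathbb{S}^\xi_{N,1}(n_1)$, so the inequality "$>\eta$" as written is unjustified. The paper resolves this by insisting $n_1\geqslant 3$ and using the fact that for $\xi\geqslant 1$, $\mathbb{S}^\xi_{N,1}(n)\leqslant 1/\min N = 1/n_1\leqslant 1/3$ for all $n$, which gives a lower bound of $2\eta/3$; one then contradicts the averaging hypothesis for any $\ee<2\eta/3$. The fix is routine, but without it the step fails.
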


\begin{proof} Throughout the proof, for ease of notation, let $\langle\cdot\rangle_M=\langle \cdot\rangle_{X, \textsf{F}, K, M}$.  First suppose that $Sz(K)>\omega^\xi$.  Fix $\ee$ such that $Sz(K, 15\ee)>\omega^\xi$.    Then by Corollary \ref{abandoned}, there exist  collections $(x_E)_{E\in \mathcal{S}_\xi\setminus \{\varnothing\}}\subset B_X$ and $(x^*_E)_{E\in MAX(\mathcal{S}_\xi)}\subset K$ such that for every $\varnothing\prec E\preceq F\in MAX(\mathcal{S}_\xi)$, $\text{Re\ }x^*_F(x_E)\geqslant 3\ee$, and such that $u_E\in \text{span}\{F_j: j> \max E\}$.  Seeking a contradiction, suppose that $M=(m_i)_{i=1}^\infty\in [\nn]$ is such that $$\Bigl\langle \sum_{j=1}^\infty \mathbb{S}^\xi_{N, 1}(n_j)e_j\Bigr\rangle_N \leqslant \ee$$ for all $N\in[M]$.   Fix $3\leqslant n_1$. Assuming that $n_1<\ldots<n_k$ have been chosen, if $(n_1, \ldots, n_k)\in \mathcal{S}_\xi$, fix $n_k<n_{k+1}\in M$ such that $x_{(n_1, \ldots, n_k)}\in \text{span}\{F_j: j<n_{k+1}\}$.  If $(n_1, \ldots, n_k)\notin \mathcal{S}_\xi$, fix $n_k<n_{k+1}\in M$ arbitrary.  By compactness of $\mathcal{S}_\xi$ together with the fact that for each $\varnothing \neq E\in \mathcal{S}_\xi\setminus MAX(\mathcal{S}_\xi)$, $E\cup (1+\max E)\in \mathcal{S}_\xi$,  there exists $k\in \nn$ such that $G=(n_1, \ldots, n_k)\in MAX(\mathcal{S}_\xi)$.    Let $n_0=0$ and $x_1=0\in [F_j:  n_0<j\leqslant n_1]$.    For $1<i\leqslant k$, let $x_i=x_{(n_1, \ldots, n_{i-1})}\in [F_j: n_{i-1}<j\leqslant n_i]$.    Then $$\ee\geqslant \Bigl\langle\sum_{i=1}^\infty \mathbb{S}^\xi_{N,1}(n_j)e_j\Bigr\rangle_N \geqslant \text{Re\ }x^*_G(\sum_{j=1}^k\mathbb{S}^\xi_{N,1}(n_j)x_j) \geqslant 3\ee(1-\mathbb{S}^\xi_{N,1}(n_1))\geqslant 3\ee(2/3)=2\ee,$$ a contradiction.

Now suppose that $Sz(K)\leqslant \omega^\xi$.  Fix $\ee>0$ and let $\mathcal{V}$ denote the set of $M\in [\nn]$ such that $$\Bigl\langle\sum_{j=1}^\infty \mathbb{S}^\xi_{M,1}(n_j)e_j\Bigr\rangle_M \leqslant \ee.$$  By the permanence properties of the measures $\mathbb{S}^\xi_{M,i}$, it follows that $\mathcal{V}$ is closed.   Then there exists $M\in[\nn]$ such that either $[M]\cap \mathcal{V}=\varnothing$ or $[M]\subset \mathcal{V}$.  We will show that $[M]\subset \mathcal{V}$, which will finish the proof.  Seeking a contradiction, assume $[M]\cap \mathcal{V}=\varnothing$.   For each $F=(n_i)_{i=1}^k\in MAX(\mathcal{S}_\xi)\cap [M]^{<\nn}$, we may fix $N_F\in [M]$ such that $F$ is an initial segment of $N_F$.  Then since $$\ee<\Bigl\langle \sum_{j=1}^\infty \mathbb{S}^\xi_{N_F, 1}(n_j)e_j\Bigr\rangle_{N_F},$$ there exist $x^*_F\in K$ and $(x_i^F)_{i=1}^k\in B_X^k\cap \prod_{i=1}^k [F_j: n_{i-1}<j\leqslant n_i]$ such that $\text{Re\ }x^*_F(\sum_{j=1}^k \mathbb{S}^\xi_{N_F,1}(n_j) x^F_i) \geqslant \ee$.    Define $f:\mathcal{S}_\xi\bowtie M\to \rr$ as follows: If $F=(n_i)_{i=1}^k\in MAX(\mathcal{S}_\xi)\cap [M]^{<\nn}$, let $f(n_i, F)=\text{Re\ }x^*_F(x_i^F)$.   Then by Proposition \ref{CN}, there exists $P\in [\nn]$ such that for any $E\in \mathcal{S}_\xi$, there exists $P(E)\subset F\in MAX(\mathcal{S}_\xi)\cap [M]^{<\nn}$ such that for each $j\in P(E)$, $f(j, F)\geqslant \ee/2$.    From this it easily follows that $\mathcal{S}_\xi(P)\subset \mathcal{H}(X, \textsf{F}, K, \ee/2)$, and $CB(\mathcal{H}(X, \textsf{F}, K, \ee/2))\geqslant CB(\mathcal{S}_\xi)=\omega^\xi+1$.  But since $Sz(K, \ee/3)<\omega^\xi$, $$CB(\mathcal{H}(X, \textsf{F}, K, \ee/2)) \leqslant 2 Sz(K, \ee/2)<\omega^\xi,$$ a contradiction.

\end{proof}

\begin{corollary}  Suppose $E$ is a sequence space the canonical basis of which is shrinking.  Suppose that $X$ is a Banach space with bimonotone FDD $\textsf{\emph{F}}$.  Then $\textsf{\emph{F}}$ is shrinking in $X^E_\wedge(\textsf{\emph{F}})$ and  $$Sz(X^E_\wedge(\textsf{\emph{F}}))\leqslant Sz(E).$$

\label{7u}
\end{corollary}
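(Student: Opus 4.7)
The strategy is to first establish the Szlenk index bound $Sz(X^E_\wedge(\textsf{F}))\leqslant Sz(E)$ via Lemma \ref{sprites} and Lemma \ref{trip}, and then to deduce that $\textsf{F}$ is shrinking in $X^E_\wedge(\textsf{F})$. Since the canonical basis of $E$ is shrinking, $\textsf{E}$ is $B_{E^*}$-shrinking in $E$ and we may write $Sz(E)=\omega^\xi$ for some $\xi<\omega_1$. By the forward direction of Lemma \ref{sprites} applied to $E$, for every $\ee>0$ and $L\in[\nn]$ there exists $M\in[L]$ such that
\[
\sup_{N\in[M]}\Bigl\langle\sum_{j=1}^\infty\mathbb{S}^\xi_{N,1}(n_j)e_j\Bigr\rangle_{E,\textsf{E},B_{E^*},N}\leqslant\ee/2.
\]
Setting $K_0=B_{(X^E_\wedge(\textsf{F}))^*}$, Lemma \ref{trip} (which imposes no shrinking hypothesis on $\textsf{F}$) upgrades this to
\[
\sup_{N\in[M]}\Bigl\langle\sum_{j=1}^\infty\mathbb{S}^\xi_{N,1}(n_j)e_j\Bigr\rangle_{X^E_\wedge(\textsf{F}),\textsf{F},K_0,N}\leqslant\ee.
\]
The reverse direction of Lemma \ref{sprites} (seminorm bound implies Szlenk bound) is proved by contrapositive using only Corollary \ref{abandoned}, which requires merely that $\textsf{F}$ be an FMD; by Proposition \ref{opposition}(i), $\textsf{F}$ is a bimonotone FDD for $X^E_\wedge(\textsf{F})$, so the argument applies and yields $Sz(X^E_\wedge(\textsf{F}))=Sz(K_0)\leqslant\omega^\xi=Sz(E)$.

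For the shrinking claim, let $(y_n)$ be a bounded block sequence in $X^E_\wedge(\textsf{F})$ with $y_n\in[F_j:k_{n-1}<j\leqslant k_n]$ and $\|y_n\|_{X^E_\wedge(\textsf{F})}\leqslant 1$. By Remark \ref{ily}, one may reduce to the case $[y_n]_{X^E_\wedge(\textsf{F})}\leqslant 1$. Each such $y_n$ admits a witnessing interval partition $(I^n_i)_i$ producing a representative $\phi_n=\sum_i\|I^n_i y_n\|_X e_{\max I^n_i}\in E$ with $\|\phi_n\|_E\leqslant 2$ and supported in $[k_{n-1}+1,\infty)$. Since $\textsf{E}$ is shrinking in $E$, the sequence $(\phi_n)$, being bounded and going to zero coordinate-wise, is weakly null in $E$. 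The plan is then to transfer this weak nullity back to $(y_n)$ via the duality identification $(X^E_\wedge(\textsf{F}))^{(*)}=(X^{(*)})^{E^{(*)}}_\vee(\textsf{F}^*)$ supplied by Proposition \ref{opposition}(ii), using that this subspace is norming and that $X^E_\wedge(\textsf{F})$ is Asplund (hence contains no copy of $\ell_1$) by the first part.

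The main obstacle lies precisely in this final transfer step: a general functional $x^*\in(X^E_\wedge(\textsf{F}))^*$ need not lie in the coordinate-functional subspace $(X^E_\wedge(\textsf{F}))^{(*)}$, so verifying $x^*(y_n)\to 0$ against such $x^*$ is not automatic. I expect the argument to combine Rosenthal's theorem (guaranteeing a weakly Cauchy subsequence, since $X^E_\wedge(\textsf{F})$ contains no $\ell_1$) with a direct analysis of the interval-partition structure of $\|\cdot\|_{X^E_\wedge(\textsf{F})}$, showing that any nonzero weak$^*$-limit in the bidual vanishing on $(X^E_\wedge(\textsf{F}))^{(*)}$ is incompatible with the shrinking of $\textsf{E}$ in $E$, thereby ruling out James-like bidual behavior.
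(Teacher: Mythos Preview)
Your argument for the Szlenk bound is correct and matches the paper's use of Lemma \ref{trip} together with Lemma \ref{sprites}; your observation that the implication ``seminorm estimate $\Rightarrow Sz(K)\leqslant\omega^\xi$'' in Lemma \ref{sprites} only invokes Corollary \ref{abandoned} (which needs just an FMD, not $K$-shrinking) is accurate and lets you run that half before knowing $\textsf{F}$ is shrinking in $X^E_\wedge(\textsf{F})$.

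The shrinking argument, however, has a genuine gap, and your proposed route through Rosenthal's theorem and a bidual analysis is both incomplete and unnecessary. You have already done the hard work: once $Sz(X^E_\wedge(\textsf{F}))\leqslant\omega^\xi<\omega_1$, the space $X^E_\wedge(\textsf{F})$ is Asplund, hence (being separable, as it has an FDD) has separable dual, and it is classical that an FDD in a space with separable dual is shrinking. That one-line deduction finishes your proof. The paper takes a slightly different but equally short path: it proves shrinking \emph{first}, invoking the standard equivalence that an FDD $\textsf{G}$ in $Z$ is shrinking if and only if $CB(\mathcal{H}(Z,\textsf{G},B_{Z^*},\ee))<\omega_1$ for every $\ee>0$, and then observes that Lemma \ref{trip} gives
\[
CB\bigl(\mathcal{H}(X^E_\wedge(\textsf{F}),\textsf{F},B_{(X^E_\wedge(\textsf{F}))^*},\ee)\bigr)\leqslant CB\bigl(\mathcal{H}(E,\textsf{E},B_{E^*},\ee/2)\bigr)<\omega_1,
\]
the last inequality holding because $\textsf{E}$ is shrinking in $E$. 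With shrinking established up front, the paper can then apply Lemma \ref{sprites} directly without your careful dissection of which implication uses which hypothesis. Either order works; the point is that your block-sequence/Rosenthal programme is not needed.
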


\begin{proof} First, we recall the following easy fact. For any Banach space $Z$ with FDD $\textsf{G}$,  then $\textsf{G}$ is a shrinking FDD for $Z$ if and only if for every $\ee>0$, $CB(\mathcal{H}(Z, \textsf{G}, B_{Z^*}, \ee))<\omega_1$.  Since $$\langle\cdot\rangle_{X^E_\wedge(\textsf{F}), \textsf{F}, B_{X^E_\wedge(\emph{F})^*}, M}\leqslant 2 \langle \cdot\rangle_{E, \textsf{E}, B_{E^*}, M}$$ for all $M\in[\nn]$, $$CB(\mathcal{H}(X^E_\wedge(\textsf{F}), \textsf{F}, B_{X^E_\wedge(\textsf{F})^*}, \ee) ) \leqslant CB(\mathcal{H}(E, \textsf{E}, B_{E^*}, \ee/2))$$ for all $\ee>0$.  Since $\textsf{E}$ is shrinking in $E$, the latter value is countable for each $\ee>0$, as is the former. From this it follows that $\textsf{F}$ is shrinking in $X^E_\wedge(\textsf{F})$.

Since $\dim E=\infty$ and $E^*$ is separable, $Sz(E)=\omega^\xi$ for some $0<\xi<\omega_1$.   Since $$\langle \cdot\rangle_{X^E_\wedge(\textsf{F}), \textsf{F}, B_{(X^E_\wedge(\textsf{F}))^*}, N}\leqslant 2\langle \cdot\rangle_{E, \textsf{E}, B_{E^*}, N}$$ for any $N\in [\nn]$, an appeal to Lemma \ref{sprites} gives the result.

\end{proof}

\begin{corollary} Fix $\xi<\omega_1$ and $p,q$ with $1\leqslant q<\infty$ and $1/p+1/q=1$.   Suppose $X$ is a Banach space, $K\subset X^*$, and $\textsf{\emph{F}}$ is a $K$-shrinking FMD for $X$.    Then $\textsf{\emph{p}}_\xi(K)\leqslant q$ if and only if for each $1<r<p$, there exist a blocking $\textsf{\emph{G}}$ of $\textsf{\emph{F}}$, a sequence $(m_n)_{n=1}^\infty \in [\nn]$,  and a constant $C\geqslant 0$ such that for each $0=r_0<r_1<\ldots$, each $(u_i)_{i=1}^\infty\in B_X^{<\nn}\cap \prod_{i=1}^\infty [G_j: r_{i-1}<j\leqslant r_i]$, and each $(a_i)_{i=1}^\infty\in c_{00}$, $$r_K(\sum_{i=1}^\infty a_i u_i) \leqslant C\|\sum_{i=1}^\infty a_i e_{m_{r_i}}\|_{\xi,r}.$$

\label{nowitstimetoreallybaernnotice}
\end{corollary}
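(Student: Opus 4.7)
The plan is to establish each direction of the equivalence using the machinery already developed, most notably Lemma \ref{sprites} (which characterizes $Sz(K) \leqslant \omega^\xi$ in terms of Schreier-$\xi$ averages against $K$), Corollary \ref{abandoned} (which extracts block/functional trees from non-empty transfinite derivatives), and Lemma \ref{door}(ii), which identifies the critical Baernstein threshold $1/m^{1/s}$ with the Cantor--Bendixson index $\omega^\xi m + 1$ of its associated support family. This last is the precise quantitative bridge between the power type $\textsf{p}_\xi(K)$ and an upper Baernstein $X_{\xi,r}$ estimate on blocks.

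For the implication from the upper Baernstein estimate to $\textsf{p}_\xi(K) \leqslant q$, I argue by contradiction. Assume the upper estimate holds for every $r \in (1,p)$, but $\textsf{p}_\xi(K) > q$, so there exist $s'' > q$ and $\ee_n \downarrow 0$ with $m_n > \ee_n^{-s''}$ and $s^{\omega^\xi m_n}_{5\ee_n}(K) \neq \varnothing$. Choose $r \in (1,p)$ whose conjugate exponent $s$ satisfies $q < s < s''$, and let $\textsf{G}$, $(m_k)$, and $C$ be given by the hypothesis at this $r$. Applying Corollary \ref{abandoned} to the regular family $\mathcal{A}_{m_n}[\mathcal{S}_\xi]$ (of CB index $\omega^\xi m_n + 1$) with the FMD $\textsf{G}$ and $\delta = \ee_n$ produces blocks $(x_E) \subset B_X$ compatible with $\textsf{G}$ and functionals $(x^*_E) \subset K$ such that along any maximal branch $E_1 \prec \ldots \prec E_{m_n} = F$, one has $\text{Re\ } x^*_F(x_{E_i}) \geqslant \ee_n$ for each $i$. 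Averaging along the branch gives $r_K\bigl(\frac{1}{m_n}\sum_i x_{E_i}\bigr) \geqslant \ee_n$; by the upper Baernstein hypothesis this is at most $C\bigl\|\frac{1}{m_n}\sum_i e_{m_{r_i}}\bigr\|_{\xi,r}$, and Lemma \ref{door}(ii) bounds the latter by $C m_n^{-1/s}$. Combining with $m_n > \ee_n^{-s''}$ yields $\ee_n \leqslant C \ee_n^{s''/s}$, and since $s''/s > 1$, letting $n\to\infty$ is a contradiction.

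For the converse, fix $r \in (1,p)$ with conjugate $s > q$ and pick $s' \in (q,s)$. The hypothesis provides $\ee_0$ with $Sz_\xi(K,\ee) \leqslant \ee^{-s'}$ for $\ee \in (0,\ee_0]$. I build $\textsf{G}$ and $(m_n)$ by a diagonal Ramsey procedure: at stage $n$, working at the dyadic scale $\ee_n = 2^{-n}$, combine the bound $Sz_\xi(K,\ee_n) \leqslant \ee_n^{-s'}$ with an $\mathcal{A}_m[\mathcal{S}_\xi]$-version of Lemma \ref{sprites} (obtained by repeating its proof with the weights $\mathbb{S}^\xi_{M,i}$ aggregated over $i \leqslant m \approx \ee_n^{-s'}$) and Proposition \ref{CN} to refine the current blocking and produce $N_n$ on which all relevant scale-$\ee_n$ averages against $K$ are controlled by a multiple of $\ee_n$. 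Diagonalizing yields a single $\textsf{G}$ and $(m_n)$. To verify the upper Baernstein estimate, given a block sequence $(u_i)$ compatible with $\textsf{G}$ and any $x^* \in K$, partition the indices into dyadic layers $L_k = \{i : |x^*(u_i)| \asymp 2^{-k}\}$, bound the cardinality of each $\mathcal{S}_\xi$-block in $L_k$ by the stage-$k$ estimate, and sum the $\ell_1$-masses $\ell_r$-wise to get the desired bound $C\|\sum_i a_i e_{m_{r_i}}\|_{\xi,r}$ with a geometric constant $C$.

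The main obstacle is the converse direction, specifically the promotion of Lemma \ref{sprites} from the threshold statement $Sz(K) \leqslant \omega^\xi$ to a quantitative version handling $Sz_\xi(K,\ee) \leqslant m$, and its use simultaneously at all dyadic scales to produce a single blocking and spreading sequence. The Ramsey-theoretic diagonal argument combining the permanence property of $\mathfrak{S}_\xi$, Proposition \ref{CN}, and Theorem \ref{gasp} must be arranged carefully so that both $\textsf{G}$ and $(m_n)$ satisfy the scale-$\ee_n$ requirements uniformly in $n$, and the final dyadic summation across layers must exactly recover the Baernstein $\ell_r$ exponent dictated by the conjugate relation between $r$ and $s$.
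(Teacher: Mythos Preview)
Your backward direction (Baernstein estimate $\Rightarrow \textsf{p}_\xi(K)\leqslant q$) is essentially the contrapositive of the paper's direct argument, and the core idea is right, but there is an error and a missing ingredient. The error: a maximal branch in $\mathcal{A}_{m_n}[\mathcal{S}_\xi]$ has length $|F|$, not $m_n$; you cannot write the branch as $E_1\prec\ldots\prec E_{m_n}$. The missing ingredient: the vectors $x_E$ from Corollary \ref{abandoned} only satisfy $x_E\in\text{span}\{G_j:j>\max E\}$ and are not automatically disjointly supported along a branch, so compatibility with the hypothesis (blocks in $[G_j:r_{i-1}<j\leqslant r_i]$) requires an extra branch-selection argument. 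Both are repairable, but the paper's route is cleaner: it shows directly that $\mathcal{H}(X,\textsf{G},K,\ee)\subset \mathcal{C}_{\ee/C}$ (the pullback of $\mathcal{B}_{\ee/C}$), whence $Sz(K,5\ee)\leqslant CB(\mathcal{C}_{\ee/C})\leqslant \omega^\xi\lfloor (C/\ee)^s\rfloor+1$ by Corollary \ref{bad} and Lemma \ref{door}(i). This avoids Corollary \ref{abandoned} and the contradiction framework entirely.

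Your forward direction has a genuine gap. You propose a ``quantitative $\mathcal{A}_m[\mathcal{S}_\xi]$-version of Lemma \ref{sprites}'' controlling repeated-average quantities at each dyadic scale, and then a layer decomposition $L_k=\{i:|x^*(u_i)|\asymp 2^{-k}\}$. But controlling \emph{averages} does not give you what the layer decomposition needs: to bound $\sum_{i\in L_k}|a_i|$ in terms of $\|\sum a_ie_{m_{r_i}}\|_{\xi,r}$, you must know that $(m_{r_i})_{i\in L_k}$ decomposes into a bounded number (about $2^{ks'}$) of successive $\mathcal{S}_\xi$-sets. That is a combinatorial inclusion statement about $\mathcal{H}(X,\textsf{F},K,\ee)$, not an average estimate, and it does not follow from your proposed lemma. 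The paper obtains exactly this inclusion, but by a different and more direct mechanism: using the CB-index bound $CB(\mathcal{H}(X,\textsf{F},K,2^m/2^{n/\beta}))<\omega^\xi l2^n$ (from Corollary \ref{bad} and the hypothesis $\textsf{p}_\xi(K)\leqslant q$), Gasparis's dichotomy (Theorem \ref{gasp}) gives $M_n$ with $\mathcal{H}(X,\textsf{F},K,2^m/2^{n/\beta})\cap[M_n]^{<\nn}\subset\mathcal{A}_{l2^n}[\mathcal{S}_\xi]$. After diagonalizing, the layer $C_n$ lands inside $\mathcal{A}_{l2^n}[\mathcal{S}_\xi]$, and H\"older's inequality on the $\mathcal{S}_\xi$-blocks gives the estimate with a convergent geometric constant. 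Lemma \ref{sprites} and Proposition \ref{CN} play no role here; the key tool you are missing is the direct application of Theorem \ref{gasp} to $\mathcal{H}$ versus $\mathcal{A}_{l2^n}[\mathcal{S}_\xi]$.
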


\begin{proof} First suppose that $\textsf{p}_\xi(K)\leqslant q$. Fix $1<r<\alpha<p$ and let $1=1/r+1/s=1/\alpha+1/\beta$.    Fix $m\in\nn$ such that $2^m>\sup_{x^*\in K}\|x^*\|$.  Since $\textsf{p}_\xi(K)\leqslant q$, there exists $l\in \nn$ such that for all $n\in\nn$, $$2Sz_\xi(K, 2^m/2^{1+n/\beta})< l 2^n.$$   Recursively select $M_1\supset M_2\supset \ldots$ such that for each $n\in\nn$, either $$\mathcal{H}(X, \textsf{F}, K, 2^m/2^{n/\beta}) \cap [M_n]^{<\nn}\subset \mathcal{A}_{l2^n}[\mathcal{S}_\xi]$$ or $$\mathcal{A}_{l2^n}[\mathcal{S}_\xi]\cap [M_n]^{<\nn}\subset \mathcal{H}(X, \textsf{F}, K, 2^m/2^{n/\beta}).$$  But since $$CB(\mathcal{H}(X, \textsf{F}, K, 2^m/2^{n/\beta}))<\omega^\xi l2^n+1= CB(\mathcal{A}_{l2^n}[\mathcal{S}_\xi]),$$ the first inclusion must hold.  Now fix $m_1<m_2<\ldots$, $m_n\in M_n$.   For each $n\in\nn$, let $G_n=[F_j: m_{n-1}<j\leqslant m_n]$ and let $\textsf{G}=(G_n)_{n=1}^\infty$.   Let $$C=\sum_{n=1}^\infty \frac{n2^m}{2^{(n-1)/\beta}} + \frac{2^m (l2^n)^{1/s}}{2^{(n-1)/\beta}} <\infty.$$   Now fix $0=r_0<r_1<\ldots$, $(u_i)_{i=1}^\infty\in B_X^\nn\cap \prod_{i=1}^\infty [G_j: r_{i-1}<j\leqslant r_i]$, $(a_i)_{i=1}^\infty\in c_{00}$, and $x^*\in K$ such that $$r_K(\sum_{i=1}^\infty a_i u_i)=\text{Re\ }x^*(\sum_{i=1}^\infty a_i u_i).$$   For each $n\in\nn$, let $$B_n=\{i<n: |x^*(u_i)|\in (2^m/2^{n/\beta}, 2^m/2^{(n-1)/\beta}]\}$$ and $$C_n=\{i\geqslant n: |x^*(u_i)|\in (2^m/2^{n/\beta}, 2^m/2^{(n-1)/\beta}]\}.$$   Then for any $n\in\nn$, $$\text{Re\ }x^*(\sum_{i\in B_n} a_iu_i)\leqslant \frac{2^m}{2^{(n-1)/\beta}}\sum_{i\in B_n}|a_i|\leqslant \frac{n2^m}{2^{(n-1)/\beta}}\|\sum_{i=1}^\infty a_i e_{m_{r_i}}\|_{\xi,r}.$$   For any $n\in\nn$, $$(m_{r_i})_{i\in C_n}\in \mathcal{H}(X, \textsf{F}, K, 2^m/2^{n/\beta})\cap [M_n]^{<\nn}\subset \mathcal{A}_{l2^n}[\mathcal{S}_\xi].$$  Now write $(m_{r_i})_{i\in C_n}= \cup_{i=1}^k E_i$, $k\leqslant l2^n$, $\varnothing\neq E_i\in \mathcal{S}_\xi$.   Then \begin{align*} \text{Re\ }x^*(\sum_{i\in C_n}a_i u_i) & \leqslant \frac{2^m}{2^{(n-1)/\beta}} \sum_{i\in C_n}|a_i| = \frac{2^m}{2^{(n-1)/\beta}}\sum_{j=1}^k \sum_{i\in E_j} |a_i| \\ & \leqslant \frac{2^m (l2^n)^{1/s}}{2^{(n-1)/\beta}}\bigl(\sum_{j=1}^k \bigl(\sum_{i\in E_j}|a_i|  \bigr)^r\bigr)^{1/r} \\ & \leqslant \frac{2^m (l2^n)^{1/s}}{2^{(n-1)/\beta}} \|\sum_{i=1}^\infty a_i e_{m_{r_i}}\|_{\xi,r}. \end{align*}  Then $$r_K(\sum_{i=1}^\infty a_iu_i) \leqslant \sum_{n=1}^\infty \text{Re\ } x^*( \sum_{i\in B_n\cup C_n}a_iu_i) \leqslant C\|\sum_{i=1}^\infty a_i e_{m_{r_i}}\|_{\xi,r}.$$

Now suppose that for every $1<r<p$, the blocking $\textsf{G}$, the sequence $(m_n)_{n=1}^\infty$, and the constant $C$ exist.    Now fix $1<r<p$, let $1/r+1/s=1$, and let $\textsf{G}$, $(m_n)_{n=1}^\infty$, and $C$ be as in the statement. By replacing $C$ with a larger value if necessary, we may assume $C\geqslant 1$.   For each $0<\ee\leqslant 1$, let $$\mathcal{B}_\ee=\{E\in [\nn]^{<\nn}: (\forall x\in \text{co}(e_i: i\in E))(\|x\|_{\xi,r}\geqslant \ee)\}$$ and note that $CB(\mathcal{B}_\ee) \leqslant \omega^\xi \lfloor \ee^{-s}\rfloor+1$ for all $\ee\in (0,1]$ by Lemma \ref{door}.  Let $\mathcal{C}_\ee=\{E: M(E)\in \mathcal{B}_\ee\}$ and note that $\mathcal{C}_\ee$ is homeomorphic to $\mathcal{B}_\ee$, whence $CB(\mathcal{C}_\ee)\leqslant \omega^\xi \lfloor \ee^{-s}\rfloor +1$ for all $\ee\in (0,1]$.    Now suppose that $(r_i)_{i=1}^l\in \mathcal{H}(X, \textsf{G}, K, \ee)$.      Fix any $r_l<r_{l+1}<r_{l+2}<\ldots$.   By definition, there exist $(u_i)_{i=1}^l\in B_X^l\cap \prod_{i=1}^l [G_j: r_{i-1}<j\leqslant r_i]$ and $x^*\in K$ such that $\text{Re\ }x^*(u_i)\geqslant \ee$ for each $1\leqslant i\leqslant l$.   Let $u_i=0$ for all $i>l$.     Fix non-negative scalars $(a_i)_{i=1}^l$ summing to $1$ and note that $$C\|\sum_{i=1}^l a_i e_{m_{r_i}}\|\geqslant r_K(\sum_{i=1}^l a_iu_i)\geqslant \text{Re\ }x^*(\sum_{i=1}^l a_i u_i) \geqslant \ee,$$ whence $(m_{r_i})_{i=1}^l\in \mathcal{B}_{\ee/C}$ and $(r_i)_{i=1}^l\in \mathcal{C}_{\ee/C}$.   We have shown that $$\mathcal{H}(X, \textsf{G}, K, \ee)\subset \mathcal{C}_{\ee/C},$$ whence $$Sz(K, 5\ee)\leqslant CB(\mathcal{H}(X, \textsf{G}, K, \ee)) \leqslant CB(\mathcal{C}_{\ee/C}) \leqslant \omega^\xi \lfloor (\ee/C)^{-s}\rfloor+1.$$  From this it easily follows that there exists a constant $D$ such that for any $0<\ee<1$, $Sz_\xi(K, \ee)\leqslant D/\ee^s$, and $\textsf{p}_\xi(K)\leqslant s$.     Since $1<r<p$ was arbitrary, $\textsf{p}_\xi(K)\leqslant q$.

\end{proof}

We next collect an embedding theorem which combines results from \cite{first} and \cite{second}.

\begin{theorem} Fix $\xi<\omega_1$. \begin{enumerate}[(i)]\item If $X$ is a Banach space with separable dual and $Sz(X)\leqslant \omega^\xi$, then there exists a Banach space $W$ with bimonotone FDD $\textsf{\emph{F}}$ such that $X$ is isomorphic to both a subspace and a quotient of $W^{X_\xi}_\wedge(\textsf{\emph{F}})$.  \item If $X$ is a Banach space, $1/p+1/q=1$, and $\textsf{\emph{p}}_\xi(X)<q$, then there exists a Banach space $W$ with bimonotone FDD $\textsf{\emph{F}}$ such that $X$ is isomorphic to both a subspace and a quotient of $W^{X_{\xi,p}}_\wedge(\textsf{\emph{F}})$. \end{enumerate}

\label{ute}

\end{theorem}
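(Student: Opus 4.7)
The approach is a Zippin-style embedding refined by the Szlenk-index control of Lemma \ref{sprites}. I sketch part (i); part (ii) follows the same outline with the Baernstein space $X_{\xi,p}$ in place of $X_\xi$ and with Corollary \ref{nowitstimetoreallybaernnotice} in place of Lemma \ref{sprites}.

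First I would produce, via the classical embedding theorems for Banach spaces with separable dual, a Banach space $W_0$ with shrinking bimonotone FDD $\textsf{G} = (G_n)_{n=1}^\infty$ such that $X$ is simultaneously isomorphic to a subspace of $W_0$ and to a quotient of $W_0$. Fix an embedding $\iota \colon X \hookrightarrow W_0$ and a quotient map $q \colon W_0 \twoheadrightarrow X$. Since $Sz(\iota) = Sz(q) = Sz(X) \leqslant \omega^\xi$, Lemma \ref{sprites} applied to the weak$^*$-compact sets $K_1 = \iota^{*}(B_{X^*})$ and $K_2 = q^{*}(B_{X^*})$ in $W_0^*$ produces, for each $\ee > 0$ and each $L \in [\nn]$, some $M \in [L]$ satisfying $\langle \sum_j \mathbb{S}^\xi_{N,1}(n_j) e_j\rangle_{W_0, \textsf{G}, K_i, N} \leqslant \ee$ for all $N \in [M]$ and $i \in \{1,2\}$. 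A diagonalization over a null sequence $\ee_k \searrow 0$ yields one $M_\infty = (m_n)_{n=1}^\infty \in [\nn]$ controlling all such averages simultaneously. Define the blocking $\textsf{F} = (F_n)_{n=1}^\infty$ by $F_n = [G_j : m_{n-1} < j \leqslant m_n]$, and take $W$ to be $W_0$ equipped with the (still shrinking, still bimonotone) FDD $\textsf{F}$.

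Next I would verify that $\iota$ extends to an isomorphic embedding of $X$ into $W^{X_\xi}_\wedge(\textsf{F})$. The choice of blocking, combined with Lemma \ref{trip}, is precisely what is needed so that every $\textsf{F}$-block vector of $W$ admits an upper $X_\xi$-estimate against $K_1 = \iota^{*}(B_{X^*})$; unwinding the definition of $\|\cdot\|_{W^{X_\xi}_\wedge(\textsf{F})}$ through Remark \ref{ily} then yields equivalence of the two norms on $\iota(X)$. For the quotient assertion I would invoke the duality $(W^{X_\xi}_\wedge(\textsf{F}))^{(*)} = (W^{(*)})^{X_\xi^{(*)}}_\vee(\textsf{F}^*)$ supplied by Proposition \ref{opposition} and run the analogous argument on the adjoint $q^{*} \colon X^{*} \hookrightarrow W_0^{(*)}$, so that the pre-adjoint of the resulting $X^*$-embedding is the desired quotient map onto $X$.

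The main obstacle lies in that last verification: converting the Schreier-averaged seminorm estimate of Lemma \ref{sprites}, which a priori only controls the Cesaro-type convex combinations given by the measures $\mathbb{S}^\xi_{N,1}$, into a genuine upper $X_\xi$-estimate valid for arbitrary block coefficients supported on Schreier sets. Bridging this gap should require the permanence property of the hierarchy $\mathfrak{S}_\xi$ together with Lemma \ref{Schlumprecht} to promote control of repeated averages into control over the extreme points of $B_{X_\xi^{*}}$; once such extremal control is available, Remark \ref{ily} lifts the block-by-block estimates to the full $\wedge$-norm, closing the argument.
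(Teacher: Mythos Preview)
Your approach differs substantially from the paper's, and there is a genuine gap in the subspace half.

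The paper does not build the embedding by hand. It invokes as a black box the main embedding theorem of \cite{first}: once one verifies that every weakly null even tree $(x_E)_{E\in\mathcal{E}}\subset B_X$ admits a branch with an upper $X_\xi$-estimate (respectively an upper $X_{\xi,p}$-estimate in part (ii)), that theorem manufactures Banach spaces $U,V$ with bimonotone FDDs $\textsf{G},\textsf{H}$ such that $X$ embeds in $\widehat{U}$ and is a quotient of $\widehat{V}$, where $\|u\|_{\widehat{U}}=\sup\{\|\sum_i\|I_i^{\textsf{G}}u\|e_{\min I_i}\|_E\}$ over interval partitions. Properties $S$ and $T$ of $X_\xi$ (resp.\ $X_{\xi,p}$) make these norms equivalent to the $\wedge$-norms, and one sets $W=U\oplus_\infty V$. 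The tree upper estimate in (i) is quoted directly from \cite{first}; in (ii) it is derived from Corollary \ref{nowitstimetoreallybaernnotice} applied to a shrinking FMD of $X$ itself, followed by a perturbation/recursion argument to pass from FMD blocks to tree branches.

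Your plan instead fixes a single Zippin-type $W_0$ and passes to the $\wedge$-norm after blocking. The quotient half of this is plausible: with $K_2=q^*B_{X^*}\subset W_0^*$ one has $r_{K_2}(w)\asymp\|qw\|_X$, and Corollary \ref{cc} (rather than Lemma \ref{sprites}) already delivers the block upper $X_\xi$-estimate needed for $q$ to extend boundedly to $(W_0)^{X_\xi}_\wedge(\textsf{F})$ via Remark \ref{hops}; since $\|\cdot\|_\wedge\leqslant\|\cdot\|_{W_0}$, surjectivity survives.

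The subspace half, however, does not go through. First, $K_1=\iota^*(B_{X^*})$ is ill-typed: for $\iota:X\to W_0$ the adjoint $\iota^*$ maps $W_0^*$ to $X^*$, and unless $\iota(X)$ is complemented there is no natural weak$^*$-compact $K\subset W_0^*$ with $r_K|_{\iota(X)}\asymp\|\cdot\|_X$. More fundamentally, passing from $W_0$ to $(W_0)^{X_\xi}_\wedge(\textsf{F})$ \emph{decreases} the norm, so what is needed is a \emph{lower} bound $\|\iota(x)\|_\wedge\geqslant c\|x\|_X$ on vectors of $\iota(X)$, which are not block vectors with respect to $\textsf{F}$. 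Upper $X_\xi$-estimates on block sequences, Lemma \ref{trip}, and Remark \ref{ily} all point the wrong way for this; none of them prevents the $\wedge$-norm from collapsing on $\iota(X)$. The Odell--Schlumprecht machinery underlying \cite{first} resolves exactly this by constructing $U$ so that each $x\in X$ is well approximated by skipped block vectors satisfying the right estimates, and that construction is the missing ingredient in your sketch. The obstacle you flag (promoting repeated-average control to full $X_\xi$-control) is real but secondary, and is in any case already handled in the paper by the $\mathcal{H}$-family route of Corollary \ref{cc} rather than via Lemma \ref{sprites}.
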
 

\begin{proof} $(i)$ Let $\mathcal{E}=\{(n_1, \ldots, n_{2k}): k\in\nn, n_1<\ldots <n_{2k}\}$.     We first remark that it was shown in \cite{first} that if $Sz(X)\leqslant \omega^\xi$, then there exists a constant $C$ such that for any collection $(x_E)_{E\in \mathcal{E}}\subset B_X$ such that for each $n_1<\ldots <n_{2k-1}$, $(x_{(n_1, \ldots, n_{2k-1}, n_k)})_{n_k>n_{2k-1}}$ is weakly null, there exist $n_1<n_2<\ldots$ such that for any $(a_i)_{i=1}^\infty \in c_{00}$, $$\|\sum_{i=1}^\infty a_i x_{(n_1, \ldots, n_{2i})}\|\leqslant C\|\sum_{i=1}^\infty a_i e_{n_{2i-1}}\|_{X_\xi}.$$   From the main embedding theorem of \cite{first}, since the canonical basis of $X_\xi$ is shrinking and has properties $R$, $S$, and $T$, there exist Banach spaces $U,V$ with bimonotone FDDs $\textsf{G}$ and $\textsf{H}$ such that $X$ is isomorphic to a subspace of $\widehat{U}$ and to a quotient of $\widehat{V}$, where the norm on $\widehat{U}$ is given by $$\|u\|_{\widehat{U}}= \sup\Bigl\{\|\sum_{i=1}^\infty \|I^\textsf{G}_i u\|_U e_{\min I_i}\|_{X_\xi}: I_1<I_2<\ldots, I_i\text{\ an interval}\Bigr\}$$ and the norm of $\widehat{V}$ is given by $$\|v\|_{\widehat{V}}= \sup\Bigl\{\|\sum_{i=1}^\infty \|I^\textsf{G}_i u\|_V e_{\min I_i}\|_{X_\xi}: I_1<I_2<\ldots, I_i\text{\ an interval}\Bigr\}.$$  Since $X_\xi$ has properties $S$ and $T$, the norms of $\widehat{U}$ and $\widehat{V}$ are equivalent to $\|\cdot\|_{U^{X_\xi}_\wedge(\textsf{G})}$ and $\|\cdot\|_{V^{X_\xi}_\wedge(\textsf{H})}$, respectively.   Let $F_n=G_n\oplus_\infty H_n$ and $W=U\oplus_\infty V$.  Then $X$ is isomorphic to a subspace and a quotient of $W^{X_\xi}_\wedge(\textsf{F})$.

$(ii)$ This is similar to $(i)$.   We only need to show that if $X$ is a Banach space with separable dual and $\textsf{p}_\xi(X)<q$, then there exists constant $C'$ such that   for any $(x_E)_{E\in \mathcal{E}}\subset B_X$ such that for each $n_1<\ldots <n_{2k-1}$, $(x_{(n_1, \ldots, n_{2k-1}, n_k)})_{n_k>n_{2k-1}}$ is weakly null, there exist $n_1<n_2<\ldots$ such that for any $(a_i)_{i=1}^\infty \in c_{00}$, $$\|\sum_{i=1}^\infty a_i x_{(n_1, \ldots, n_{2i})}\|\leqslant C'\|\sum_{i=1}^\infty a_i e_{n_{2i-1}}\|_{X_{\xi,p}}.$$  We note that, as shown in \cite{second},  the canonical basis of $X_{\xi,p}$ is shrinking, and $X_{\xi,p}$ has properties $R$, $S$, and $T$,  so the  main embedding theorem from \cite{first} applies.  In order to find the indicated constant $C'$, we note that by Corollary \ref{nowitstimetoreallybaernnotice}, there exist an FMD $\textsf{I}=(I_n)_{n=1}^\infty$ for $X$ and $m_1<m_2<\ldots$ such that for any $0=r_0<r_1<\ldots$, any $(u_i)_{i=1}^\infty B_X^\nn\cap \prod_{i=1}^\infty [I_j: r_{i-1}<j\leqslant r_i]$, and any $(a_i)_{i=1}^\infty \in c_{00}$, $$\|\sum_{i=1}^\infty a_iu_i\|\leqslant C\|\sum_{i=1}^\infty a_i e_{m_{r_i}}\|_{X_{\xi,p}}.$$   Now note that, since $X_{\xi,p}$ has property $S$, there exists a constant $D$ such that for any $s_1<t_1<s_2<t_2<\ldots$ and any $(a_i)_{i=1}^\infty \in c_{00}$, $$\|\sum_{i=1}^\infty a_i e_{t_i}\|_{\xi,p} \leqslant D\|\sum_{i=1}^\infty a_i e_{s_i}\|_{\xi,p}.$$      Let $C'=CD+1$.  Fix a sequence of positive numbers $(\ee_n)_{n=1}^\infty$ such that $\sum_{n=1}^\infty\ee_n=1$ and suppose $(x_E)_{E\in \mathcal{E}}$ is as above.  Let us recursively select $n_1<n_2<\ldots$, $t_1<t_2<\ldots$, and $u_i\in B_X$ such that \begin{enumerate}[(i)]\item $\|x_{(n_1, \ldots, n_{2i})}-u_i\|_X<\ee_i$, \item $u_i\in [I_j: t_{i-1}<j\leqslant t_i]$, \item $n_{2i-1}<m_{t_i}<n_{2i+1}$. \end{enumerate}

We may fix $n_1=1$, $n_2=2$, $u_1\in B_X\cap \text{span}\{I_j: j\in\nn\}$ such that $\|x_{(n_1, n_2)}-u_1\|<\ee_1$, and $t_1\in \nn$ such that $u_1\in \text{span}\{I_j: j\leqslant t_1\}$.  Now assume that $n_1<\ldots <n_{2i}$, $t_1<\ldots < t_i$, $u_1, \ldots, u_i$ have been chosen.   Fix $n_{2i+1}>m_{t_i}$.  Then choose $n_{2i+2}>n_{2i+1}$ such that $$d(x_{(n_1, \ldots, n_{2i+2})}, B_X\cap [I_j: j>t_i])<\ee_{i+1},$$ $u_{i+1}\in B_X\cap \text{span}\{I_j: j>t_i\}$ such that $\|x_{(n_1, \ldots, n_{2i+2})}-u_{i+1}\|<\ee_{i+1}$, and $t_{i+1}>n_{2i+2}$ such that $u\in \text{span}\{I_j: t_i<j\leqslant t_{i+1}\}$.    Now for any $(a_i)_{i=1}^\infty \in c_{00}$, letting $a=\|(a_i)_{i=1}^\infty\|_{c_0}$, \begin{align*} \|\sum_{i=1}^\infty a_i x_{(n_1, \ldots, n_{2i})}\| & \leqslant a+\|\sum_{i=1}^\infty a_iu_i\| \leqslant a+C\|\sum_{i=1}^\infty a_i e_{m_{t_i}}\|_{\xi,p} \\ & \leqslant (1+CD)\|\sum_{i=1}^\infty a_i e_{n_{2i-1}}\|_{\xi,p} = C'\|\sum_{i=1}^\infty a_i e_{n_{2i-1}}\|_{\xi,p}. \end{align*}

\end{proof}

\begin{corollary} Suppose $X$ is a Banach space, $K\subset X^*$, $\textsf{\emph{F}}$ is a $K$-shrinking FMD for $X$, $0<\xi<\omega_1$, and $Sz(K)\leqslant \omega^\xi$.   Then there exist a blocking $\textsf{\emph{G}}$ of $\textsf{\emph{F}}$, a sequence $(m_n)_{n=1}^\infty \in [\nn]$,  and a constant $C\geqslant 0$ such that for each $0=r_0<r_1<\ldots$, each $(u_i)_{i=1}^\infty\in B_X^{<\nn}\cap \prod_{i=1}^\infty [G_j: r_{i-1}<j\leqslant r_i]$, and each $(a_i)_{i=1}^\infty\in c_{00}$, $$r_K(\sum_{i=1}^\infty a_i u_i) \leqslant C\|\sum_{i=1}^\infty a_i e_{m_{r_i}}\|_\xi.$$  

\label{cc}
\end{corollary}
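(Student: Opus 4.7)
The plan is to mimic the proof of Corollary \ref{nowitstimetoreallybaernnotice}, replacing the Baernstein norm $\|\cdot\|_{\xi,r}$ by the Schreier norm $\|\cdot\|_\xi$ and using the qualitative hypothesis $Sz(K)\leqslant \omega^\xi$ in place of the quantitative bound on $\textsf{p}_\xi(K)$. First I would fix $R>\sup_{x^*\in K}\|x^*\|$ and set $\ee_n=R/2^{n-1}$, so that $\sum_{n\geqslant 1} (n+1)\ee_n<\infty$. Since $\xi>0$, $\omega^\xi$ is a limit ordinal, and since $K$ is nonempty and weak$^*$-compact, each $Sz(K,\ee)$ is necessarily a successor; hence $Sz(K,\ee)<\omega^\xi$ for every $\ee>0$. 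Corollary \ref{bad} then yields
$$CB(\mathcal{H}(X,\textsf{F},K,\ee_n))\leqslant 2\,Sz(K,\ee_n/2)<\omega^\xi< \omega^\xi+1=CB(\mathcal{S}_\xi).$$

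Next, noting that $\mathcal{H}(X,\textsf{F},K,\ee)$ is hereditary, I would apply Gasparis's theorem \ref{gasp} iteratively to produce a decreasing sequence $\nn\supset M_1\supset M_2\supset\ldots$ such that $\mathcal{H}(X,\textsf{F},K,\ee_n)\cap [M_n]^{<\nn}\subset \mathcal{S}_\xi$ for every $n$. Diagonally choose $m_1<m_2<\ldots$ with $m_n\in M_n$, and define the blocking $G_n=[F_j: m_{n-1}<j\leqslant m_n]$. Then given $0=r_0<r_1<\ldots$, $(u_i)\in B_X^\nn\cap\prod_i [G_j: r_{i-1}<j\leqslant r_i]$ (so $u_i\in[F_\ell: m_{r_{i-1}}<\ell\leqslant m_{r_i}]$), and $(a_i)\in c_{00}$, I would fix $x^*\in K$ attaining $r_K(\sum a_i u_i)$ and partition the indices via $A_n=\{i: |x^*(u_i)|\in (\ee_{n+1},\ee_n]\}$, further splitting $A_n=B_n\cup C_n$ with $B_n=A_n\cap\{1,\ldots,n\}$ and $C_n=A_n\cap\{n+1,n+2,\ldots\}$. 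Since $|x^*(u_i)|\leqslant R=\ee_1$, these bins cover all indices.

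The central observation is that $(m_{r_i})_{i\in C_n}\in \mathcal{H}(X,\textsf{F},K,\ee_{n+1})\cap[M_{n+1}]^{<\nn}\subset \mathcal{S}_\xi$. For this, enumerate $C_n=\{i_1<\ldots<i_p\}$; using $i_{j-1}\leqslant i_j-1$, we get $m_{r_{i_{j-1}}}\leqslant m_{r_{i_j-1}}$, so $u_{i_j}\in [F_\ell: m_{r_{i_{j-1}}}<\ell\leqslant m_{r_{i_j}}]$, and $|x^*(u_{i_j})|\geqslant \ee_{n+1}$ witnesses membership in $\mathcal{H}$. For the index condition, $r_i\geqslant i\geqslant n+1$ gives $m_{r_i}\in M_{r_i}\subset M_{n+1}$. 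Choosing $\{m_{r_i}: i\in C_n\}$ as a Schreier-$\xi$ set yields $\sum_{i\in C_n}|a_i|\leqslant \|\sum_i a_i e_{m_{r_i}}\|_\xi$. Meanwhile $|B_n|\leqslant n$ together with $\max_i|a_i|\leqslant \|\sum_i a_i e_{m_{r_i}}\|_\xi$ (Schreier sets include singletons) gives $\sum_{i\in B_n}|a_i|\leqslant n\|\sum_i a_i e_{m_{r_i}}\|_\xi$. Summing over $n$ produces
$$\text{Re}\,x^*\Bigl(\sum_i a_i u_i\Bigr)\leqslant \sum_{n=1}^\infty \ee_n(n+1)\Bigl\|\sum_i a_i e_{m_{r_i}}\Bigr\|_\xi,$$
and $C:=\sum_n \ee_n(n+1)<\infty$ completes the proof.

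The main obstacle is not conceptual but combinatorial bookkeeping: ensuring simultaneously that $(m_{r_i})_{i\in C_n}$ has the correct block-sequence structure required for membership in $\mathcal{H}(X,\textsf{F},K,\ee_{n+1})$ and lies inside $[M_{n+1}]^{<\nn}$. Both requirements are handled cleanly by the inequality $r_i\geqslant i$ and the nested choice $M_1\supset M_2\supset\ldots$, as above.
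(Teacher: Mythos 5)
Your proposal is correct and follows essentially the same route as the paper: the paper's proof also selects nested sets $M_1\supset M_2\supset\ldots$ via Gasparis's theorem (justified by $CB(\mathcal{H}(X,\textsf{F},K,\ee_n))\leqslant 2Sz(K,\ee_n/2)<\omega^\xi<CB(\mathcal{S}_\xi)$), takes a diagonal sequence $m_n\in M_n$ to form the blocking, and then runs the same $B_n/C_n$ binning of indices according to the size of $|x^*(u_i)|$, exactly as in Corollary \ref{nowitstimetoreallybaernnotice}. You have simply written out the bookkeeping (including the $r_i\geqslant i$ observation and the successor-ordinal argument giving $Sz(K,\ee)<\omega^\xi$) that the paper compresses into ``as in Corollary \ref{nowitstimetoreallybaernnotice}.''
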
 

\begin{proof} Fix $R>\sup_{x^*\in K}\|x^*\|$.       As in Corollary \ref{nowitstimetoreallybaernnotice}, we recursively select $M_1\supset M_2\supset \ldots$ such that for all $n\in\nn$, $$\mathcal{H}(X, \textsf{F}, K, 2^m/2^n)\cap [M_n]^{<\nn}\subset \mathcal{S}_\xi.$$  We may do this, since $$CB(\mathcal{H}(X, \textsf{F}, K, 2^m/2^n))<\omega^\xi.$$    Now fix $m_1<m_2<\ldots$, $m_n\in M_n$ and let $G_n=[F_j: m_{n-1}<j\leqslant m_n]$, $0=r_0<r_1<\ldots$, $(u_i)_{i=1}^\infty \in B_X^\nn\cap \prod_{i=1}^\infty [G_j: r_{i-1}<j\leqslant r_i]$, and $(a_i)_{i=1}^\infty \in c_{00}$, $$r_K(\sum_{i=1}^\infty a_iu_i) \leqslant \|\sum_{i=1}^\infty a_ie_{m_{r_i}}\|_{X_\xi}\sum_{n=1}^\infty  \frac{2^m}{2^{n-1}}n.$$

\end{proof}

\section{Factorization and universality}

We first recall a construction of Schechtman.  There exists a sequence $\textsf{U}=(U_n)_{n=1}^\infty$ of finite dimensional spaces which form a bimonotone FDD for a Banach space $\mathfrak{U}$ such that if $X$ is any Banach space with bimonotone FDD $\textsf{F}=(F_n)_{n=1}^\infty$ and if $m_1<m_2<\ldots$ are natural numbers, then  there exist a sequence $k_1<k_2<\ldots$ of natural numbers, a sequence $I_n:F_n\to U_{k_n}$ of isomorphisms, and a projection $P:\mathfrak{U}\to [U_{k_i}: i\in\nn]$ such that \begin{enumerate}[(i)]\item $m_i<k_i$ for all $i\in\nn$, \item $\|I_n\|, \|I_n^{-1}\|\leqslant 2$, \item the map $x=\sum_{n=1}^\infty x_n\mapsto \sum_{n=1}^\infty I_n x_n$ defines an isomorphism  $I:X\to [U_{k_i}:i\in\nn]$ such that $\|I\|, \|I^{-1}\|\leqslant 2$, \item $\|P\|=1$. \end{enumerate} 

In the sequel, the symbol $\mathfrak{U}$ will be reserved for this space and the symbol $\textsf{U}$ will denote the FDD $(U_n)_{n=1}^\infty$ of $\mathfrak{U}$. 

\begin{proposition} \begin{enumerate}[(i)]\item If $P:\mathfrak{U}\to [U_{k_i}: i\in \nn]$ is the norm $1$ projection given by $P\sum_{i=1}^\infty x_i= \sum_{i=1}^\infty x_{k_i}$, then for any sequence space $E$, $P:\mathfrak{U}^E_\wedge(\textsf{U})\to \mathfrak{U}^E_\wedge(\textsf{U})$ is also norm $1$. \item If $E$ is a sequence space with property $R$, $k_1<k_2<\ldots$, $\mathfrak{V}=[U_{k_i}:i\in \nn]\subset \mathfrak{U}$, $V_n=U_{k_n}$, the projection $P:\mathfrak{U}\to \mathfrak{V}$ given by $P\sum_{i=1}^\infty x_i=\sum_{i=1}^\infty x_{k_i}$ is norm $1$,  $\textsf{\emph{V}}=(V_n)_{n=1}^\infty$, and the norms $\|\cdot\|_\mathfrak{U}$ and $\|\cdot\|_{\mathfrak{V}^E_\wedge(\textsf{\emph{V}})}$ are equivalent on $\mathfrak{V}$, then the norms $\|\cdot\|_\mathfrak{U}$ and $\|\cdot\|_{\mathfrak{U}^E_\wedge(\textsf{\emph{U}})}$ are equivalent on $\mathfrak{V}$. \item Suppose $\mathcal{G}$, $\mathcal{G}_0, \mathcal{G}_1, \ldots$ are regular families such that $CB(\mathcal{G})<\sup_n CB(\mathcal{G}_n)$ and  $1=\vartheta_0>\vartheta_1>\ldots$, $\lim_n \vartheta_n=0$. Let $E=X(\mathcal{G}_n, \vartheta_n)$ be the mixed Schreier space.   If $W$ is a Banach space with FDD $\textsf{\emph{F}}$ such that the norms $\|\cdot\|_W$ and $\|\cdot\|_{W^{X_\mathcal{G}}_\wedge(\textsf{\emph{F}})}$ are equivalent, then $W$ embeds complementably into $\mathfrak{U}^E_\wedge(\textsf{U})$.    \end{enumerate}

\label{Schechtman}
\end{proposition}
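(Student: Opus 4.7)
The plan is to prove the three parts in order, each invoking its predecessor. For part (i), observe that $P=\sum_i P^{\textsf{U}}_{k_i}$ commutes with every interval projection $I^{\textsf{U}}$, since both are sums of coordinate projections. The norm-$1$ hypothesis for $P$ on $\mathfrak{U}$ yields $\|I^{\textsf{U}}(Py)\|_{\mathfrak{U}}=\|P(I^{\textsf{U}}y)\|_{\mathfrak{U}}\leqslant \|I^{\textsf{U}}y\|_{\mathfrak{U}}$ for every interval $I$, and $1$-unconditionality of the canonical basis of $E$ propagates this termwise inequality to $[Py]_{\mathfrak{U}^E_\wedge(\textsf{U})}\leqslant [y]_{\mathfrak{U}^E_\wedge(\textsf{U})}$; passing to infima over decompositions then yields the $\|\cdot\|$-version.

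For part (ii), fix $y\in c_{00}(\textsf{V})$ and an interval partition $I_1<I_2<\ldots$ of $\mathbb{N}$ in the $\textsf{U}$-labeling. Setting $J_i=\{j:k_j\in I_i\}$ produces intervals in the $\textsf{V}$-labeling whose non-empty members cover $\mathbb{N}$, with $I^{\textsf{U}}_iy=J^{\textsf{V}}_iy$ and $\max J_i\leqslant k_{\max J_i}\leqslant \max I_i$. Property $R$ of $E$ gives $\|\sum_i\|J^{\textsf{V}}_iy\|_\mathfrak{V} e_{\max J_i}\|_E\leqslant \|\sum_i\|I^{\textsf{U}}_iy\|_\mathfrak{U} e_{\max I_i}\|_E$, hence $[y]_{\mathfrak{V}^E_\wedge(\textsf{V})}\leqslant [y]_{\mathfrak{U}^E_\wedge(\textsf{U})}$. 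Part (i) then permits restricting the decompositions defining $\|y\|_{\mathfrak{U}^E_\wedge(\textsf{U})}$ to $\mathfrak{V}$ via $P$ (pushing $y=\sum y_i$ to $y=\sum Py_i$ with $[Py_i]\leqslant [y_i]$ and $Py_i\in c_{00}(\textsf{V})$), upgrading the inequality to $\|y\|_{\mathfrak{V}^E_\wedge(\textsf{V})}\leqslant \|y\|_{\mathfrak{U}^E_\wedge(\textsf{U})}$. Combined with the hypothesis this yields one side of the equivalence; the reverse $\|y\|_{\mathfrak{U}^E_\wedge(\textsf{U})}\leqslant \|y\|_\mathfrak{U}$ follows from testing against a single large interval covering $\text{supp}_{\textsf{U}}(y)$ padded with singletons.

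For part (iii), pick $n_0$ with $CB(\mathcal{G})<CB(\mathcal{G}_{n_0})$ and use Proposition \ref{deep facts}(v) to obtain $M=(m_i)\in[\mathbb{N}]$ with $\mathcal{G}(M)\subset\mathcal{G}_{n_0}$. Invoke Schechtman's construction with this prescribed $M$ to produce $(k_i)$ with $k_i>m_i$, a $2$-isomorphism $W\to\mathfrak{V}:=[U_{k_i}:i\in\mathbb{N}]$ carrying the FDD $\textsf{F}$ onto $\textsf{V}:=(U_{k_n})_{n=1}^\infty$, and a norm-$1$ projection $P:\mathfrak{U}\to\mathfrak{V}$. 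Since $K=(k_i)$ is a spread of $M$ and $\mathcal{G}_{n_0}$ is spreading, $\mathcal{G}(K)\subset \mathcal{G}_{n_0}$, and the hypothesis on $W$ transfers through the isomorphism to $\|\cdot\|_\mathfrak{V}\sim \|\cdot\|_{\mathfrak{V}^{X_\mathcal{G}}_\wedge(\textsf{V})}$ on $\mathfrak{V}$. The crux is the lower estimate $[y]_{\mathfrak{U}^E_\wedge(\textsf{U})}\geqslant\vartheta_{n_0}[y]_{\mathfrak{V}^{X_\mathcal{G}}_\wedge(\textsf{V})}$ for $y\in c_{00}(\textsf{V})$: with $I_i,J_i,p_i:=\max J_i,b_i:=\|J^{\textsf{V}}_iy\|_\mathfrak{V}$ as in part (ii), for any $B\in\mathcal{G}$ with $B\subset \{p_i\}$ writing $B=(p_i)_{i\in A}$ gives $(k_{p_i})_{i\in A}=K(B)\in\mathcal{G}(K)\subset\mathcal{G}_{n_0}$, whence $\|\sum_i b_ie_{k_{p_i}}\|_E \geqslant \vartheta_{n_0}\sum_{i\in A}|b_i|$; maximizing over $B$ gives $\|\sum_i b_ie_{k_{p_i}}\|_E\geqslant \vartheta_{n_0}\|\sum_i b_ie_{p_i}\|_{X_\mathcal{G}}$, and property $R$ raises each $k_{p_i}$ to $\max I_i$, yielding $\|\sum_i b_ie_{\max I_i}\|_E\geqslant \vartheta_{n_0}\|\sum_i b_ie_{\max J_i}\|_{X_\mathcal{G}}$. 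Taking infima over $I$'s and then using part (i) to reduce decompositions of $y$ to ones inside $\mathfrak{V}$ via $P$ yields $\|y\|_{\mathfrak{U}^E_\wedge(\textsf{U})}\geqslant \vartheta_{n_0}\|y\|_{\mathfrak{V}^{X_\mathcal{G}}_\wedge(\textsf{V})}$; chaining with the transferred hypothesis gives $\|y\|_\mathfrak{V}\lesssim \|y\|_{\mathfrak{U}^E_\wedge(\textsf{U})}\leqslant \|y\|_\mathfrak{U}$ on $c_{00}(\textsf{V})$. Thus the closure of $c_{00}(\textsf{V})$ in $\mathfrak{U}^E_\wedge(\textsf{U})$ is isomorphic to $W$, and by part (i) $P$ is a norm-$1$ projection of $\mathfrak{U}^E_\wedge(\textsf{U})$ onto this closure, completing the complemented embedding.

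The principal obstacle is the combinatorial engine of part (iii): arranging Schechtman's selection so that $\mathcal{G}(M)\subset\mathcal{G}_{n_0}$ propagates through spreading to $\mathcal{G}(K)\subset\mathcal{G}_{n_0}$, and then extracting from this inclusion the pointwise estimate $\|\sum_i b_ie_{k_{p_i}}\|_E\geqslant\vartheta_{n_0}\|\sum_i b_ie_{p_i}\|_{X_\mathcal{G}}$ uniformly over the arbitrary sequence $(p_i)$ produced by the interval decomposition. Parts (i) and (ii) are natural manipulations of the $\wedge$-norm, but part (iii) is where the geometry of the mixed Schreier space $E$ interacts crucially with the dilation freedom in Schechtman's construction.
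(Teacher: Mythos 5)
Your proposal is correct and follows essentially the same route as the paper's proof: the same termwise commuting argument for $(i)$, the same $J_i$-from-$I_i$ relabeling combined with property $R$ and the projection $P$ from $(i)$ to push decompositions into $c_{00}(\textsf{V})$ for $(ii)$, and the same Schechtman-selection-with-prescribed-gaps plus spread-stability of $\mathcal{G}_{n_0}$ yielding the lower estimate $\vartheta_{n_0}[\cdot]_{\mathfrak{V}^{X_\mathcal{G}}_\wedge(\textsf{V})}\leqslant[\cdot]_{\mathfrak{U}^E_\wedge(\textsf{U})}$ for $(iii)$. The only presentational difference is that the paper first isolates the clean coefficient inequality $\|\sum_i a_ie_{k_i}\|_E\geqslant \vartheta_l\|\sum_i a_ie_i\|_{X_\mathcal{G}}$ as a separate step and then feeds the chosen coefficients $\|J^{\textsf{V}}_ix\|$ into it, whereas you derive the specific instance directly through the sets $B\subset\{p_i\}$; the content is identical.
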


\begin{proof}$(i)$  For any $x\in c_{00}(\textsf{U})$, \begin{align*} [Px]_{\mathfrak{U}^E_\wedge(\textsf{U})} & = \inf\Bigl\{\|\sum_{i=1}^\infty \|I^\textsf{U}_i Px\|_\mathfrak{U}e_{\max I_i}\|_E:I_1<I_2<\ldots, \cup_{i=1}^\infty I_i=\nn\Bigr\} \\ & = \inf\Bigl\{\|\sum_{i=1}^\infty \|PI^\textsf{U}_i x\|_\mathfrak{U}e_{\max I_i}\|_E:I_1<I_2<\ldots, \cup_{i=1}^\infty I_i=\nn\Bigr\} \\ & \leqslant \inf\Bigl\{\|\sum_{i=1}^\infty \|I^\textsf{U}_i x\|_\mathfrak{U}e_{\max I_i}\|_E:I_1<I_2<\ldots, \cup_{i=1}^\infty I_i=\nn\Bigr\} \\ & = [x]_{\mathfrak{U}^E_\wedge(\textsf{U})} \end{align*}  and \begin{align*} \|Px\|_{\mathfrak{U}^E_\wedge(\textsf{U})} & = \inf\Bigl\{\sum_{i=1}^n [x_i]_{\mathfrak{U}^E_\wedge(\textsf{U})}: n\in\nn, Px=\sum_{i=1}^n x_i\Bigr\} \\ & \leqslant \inf\Bigl\{\sum_{i=1}^n [Px_i]_{\mathfrak{U}^E_\wedge(\textsf{U})}: n\in\nn, x=\sum_{i=1}^n x_i\Bigr\} \\ & \leqslant \inf\Bigl\{\sum_{i=1}^n [x_i]_{\mathfrak{U}^E_\wedge(\textsf{U})}: n\in\nn, x=\sum_{i=1}^n \Bigr\} \\ & = \|x\|_{\mathfrak{U}^E_\wedge(\textsf{U})} .\end{align*}

$(ii)$ Of course, $\|\cdot\|_{\mathfrak{U}^E_\wedge(\textsf{U})} \leqslant \|\cdot\|_\mathfrak{U}$. To establish the reverse inequality, it is sufficient to prove that  $\|x\|_{\mathfrak{V}^E_\wedge(\textsf{V})}\leqslant  \|x\|_{\mathfrak{U}^E_\wedge(\textsf{U})}$ for all $x\in c_{00}(\textsf{V})$.  To that end,  fix $x\in c_{00}(\textsf{V})$ and intervals $I_1<I_2<\ldots$ such that $\cup_{i=1}^\infty I_i=\nn$.   Let $J_i$ be such that for all $j\in \nn$, $J_j=\{i: k_i\in I_j\}$. Let $S=\{j: J_j\neq \varnothing\}$ and note that $(J_i)_{i\in S}$ are successive, $\nn=\cup_{i\in S}J_i$,  and $\max J_i\leqslant k_{\max J_i}\leqslant \max I_i$ for all $i\in S$.   Furthermore, \begin{align*} \|\sum_{i=1}^\infty \|I^\textsf{U}_i x\|_\mathfrak{U}e_{\max I_i}\|_E & = \|\sum_{i\in S} \|I^\textsf{U}_i x\|_\mathfrak{U} e_{\max I_i}\|_E = \|\sum_{i\in S} \|J^\textsf{V}_i \|_\mathfrak{U} e_{\max I_i}\|_E \geqslant \|\sum_{i\in S} \|J^\textsf{V}_i x\|_\mathfrak{U} e_{\max J_i}\|_E \\ & \geqslant [x]_{\mathfrak{V}^E_\wedge(\textsf{V})}.\end{align*}   From this it follows that $[x]_{\mathfrak{V}^E_\wedge(\textsf{V})} \leqslant [x]_{\mathfrak{U}^E_\wedge(\textsf{U})}$ for any $x\in c_{00}(\textsf{V})$.     Now for any $x\in c_{00}(\textsf{V})$, \begin{align*} \|x\|_{\mathfrak{U}^E_\wedge(\textsf{U})} & = \inf\Bigl\{\sum_{i=1}^n [x_i]_{\mathfrak{U}^E_\wedge(\textsf{U})}: x_i\in c_{00}(\textsf{U}), x=\sum_{i=1}^n x_i\Bigr\} \\ & \geqslant  \inf\Bigl\{\sum_{i=1}^n [Px_i]_{\mathfrak{U}^E_\wedge(\textsf{U})}: x_i\in c_{00}(\textsf{U}), x=\sum_{i=1}^n x_i\Bigr\} \\ & = \inf\Bigl\{\sum_{i=1}^n [x_i]_{\mathfrak{U}^E_\wedge(\textsf{U})}: x\in c_{00}(\textsf{V}), x=\sum_{i=1}^n x_i\Bigr\} \\ & \geqslant \inf\Bigl\{\sum_{i=1}^n [x_i]_{\mathfrak{V}^E_\wedge(\textsf{V})}: x\in c_{00}(\textsf{V}), x=\sum_{i=1}^n x_i\Bigr\} \\ & = \|x\|_{\mathfrak{V}^E_\wedge(\textsf{V})}. \end{align*}

$(iii)$ Fix $l\in\nn$ such that $CB(\mathcal{G})<CB(\mathcal{G}_l)$ and $M=(m_n)_{n=1}^\infty$ such that $\mathcal{G}(M)\subset \mathcal{G}_l$.   By renorming $W$, we may assume $\textsf{F}$ is bimonotone in $W$ and we may assume $W=W^{X_\mathcal{G}}_\wedge(\textsf{F})$.   Select $k_1<k_2<\ldots$, $I_n:F_n\to U_{k_n}$, and $P:\mathfrak{U}\to \mathfrak{V}=[U_{k_n}:n\in\nn]=[V_n:n\in\nn]$ satisfying (i)-(iv) as in the discussion of $\mathfrak{U}$.  Let us first note that for any $(a_i)_{i=1}^\infty\in c_{00}$, $$\|\sum_{i=1}^\infty a_i e_{k_i}\|_E \geqslant \vartheta_l \|\sum_{i=1}^\infty a_ie_i\|_{X_\mathcal{G}}.$$  Indeed, fix $F\in \mathcal{G}$ such that $$\|\sum_{i=1}^\infty a_i e_i\|_{X_\mathcal{G}} = \|F\sum_{i=1}^\infty a_ie_i\|_{\ell_1}.$$ Then $H:=\{k_i: i\in F\}$ is a spread of $M(F)$, and therefore lies in $\mathcal{G}_l$.  From this it follows that $$\|\sum_{i=1}^\infty a_i e_{k_i}\|_E \geqslant \vartheta_l \|H\sum_{i=1}^\infty a_ie_{k_i}\|_{\ell_1}= \vartheta_l \|F\sum_{i=1}^\infty a_ie_i\|_{\ell_1}= \vartheta_l\|\sum_{i=1}^\infty a_i e_i\|_{X_\mathcal{G}}.$$

We note that, since $I:W\to \mathfrak{V}$ is an isomorphism which takes $F_n$ to $V_n$, the norms $\|\cdot\|_\mathfrak{U}$ and $\|\cdot\|_{\mathfrak{V}^{X_\mathcal{G}}_\wedge(\textsf{V})}$ are equivalent on $\mathfrak{V}$.   Since $\|\cdot\|_{\mathfrak{U}^E_\wedge(\textsf{U})}\leqslant \|\cdot\|_\mathfrak{U}$, we know $\sum_{n=1}^\infty w_n\mapsto \sum_{n=1}^\infty I_nw_n$ extends to a bounded, linear map from $W$ into $\mathfrak{U}^E_\wedge(\textsf{U})$. In order to know this is an isomorphic embedding, it is sufficient to know that $$\vartheta_l \|x\|_{\mathfrak{V}^{X_\mathcal{G}}_\wedge(\textsf{V})} \leqslant \|x\|_{\mathfrak{U}^E_\wedge(\textsf{U})}$$ for all $x\in c_{00}(\textsf{V})$.   To that end, fix $x\in c_{00}(\textsf{V})$ and intervals $I_1<I_2<\ldots$ with $\cup_{i=1}^\infty I_i=\nn$.  Let $J_1, J_2, \ldots$ and $S$ be as in $(ii)$ and note that $k_{\max J_i}\leqslant \max I_i$ for all $i\in S$. Then \begin{align*} \|\sum_{i=1}^\infty \|I^\textsf{U}_i x\|_\mathfrak{U} e_{\max I_i}\|_E & = \|\sum_{i\in S} \|I^\textsf{U}_ix\|_\mathfrak{U} e_{\max I_i}\|_E \geqslant \|\sum_{i\in S} \|J^\textsf{V}_i x\|_\mathfrak{U} e_{k_{\max J_i}}\|_E \\ & \geqslant \vartheta_l \|\sum_{i\in S} \|J^\textsf{V}_ix\|_\mathfrak{U} e_{\max J_i}\|_{X_\mathcal{G}}.  \end{align*}   From this it follows that $$\vartheta_l [x]_{\mathfrak{V}^{X_\mathcal{G}}_\wedge(\textsf{V})} \leqslant [x]_{\mathfrak{U}^E_\wedge(\textsf{U})}$$ for all $x\in c_{00}(\textsf{V})$.  We now reach the desired conclusion as in $(ii)$, deducing that the image of $I$ is complemented in $\mathfrak{U}^E_\wedge(\textsf{U})$   by $(i)$.

\end{proof}

\begin{theorem} Fix $0<\xi<\omega_1$.  Let $X$ be a Banach space with shrinking FDD $\textsf{\emph{F}}$ and let $A:X\to Y$ be an operator with $Sz(A)=\omega^\xi$.  \begin{enumerate}[(i)]\item If $\xi=\omega^{\zeta+1}$, then $A$ factors through a Banach space $Z$ with Szlenk index $\omega^\xi$ if and only if there exists $\gamma<\omega^\zeta$ such that for each $n\in\nn$, $Sz(A, 2^{-n})\leqslant \gamma^n$. \item If $\xi=\omega^\zeta$, $\zeta$ a limit ordinal, $A$ does not factor through any Banach space $Z$ with $Sz(Z)=Sz(A)$. \item If $\xi=\beta+\gamma$ for some $\beta, \gamma<\xi$, then $A$ factors through a Banach space $Z$ with $Sz(A)=\omega^\xi$. \end{enumerate}

\end{theorem}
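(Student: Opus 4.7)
The plan is to handle the three cases separately, using the machinery of $A$-shrinking FMDs, mixed Schreier sequence spaces, and the interpolation construction $W^E_\wedge(\textsf{F})$ developed in earlier sections. The common thread is to encode the factorization problem as an embedding problem into a suitable $W^E_\wedge(\textsf{F})$-type space built from a carefully chosen sequence space $E$.

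For part (iii), since $\xi=\beta+\gamma$ with $\beta,\gamma<\xi$, take the $\xi$-well-constructed mixed Schreier space $E=X(\mathcal{G}_n,\vartheta_n)$ defined at the end of Section 4; Lemma \ref{ww} gives $Sz(E)=\omega^\xi$. Since $Sz(A)\leqslant\omega^\xi$ and $\textsf{F}$ is shrinking, $\textsf{F}$ is in particular $A^*B_{Y^*}$-shrinking, so Corollary \ref{cc} produces a blocking $\textsf{G}$ of $\textsf{F}$, a spread $(m_n)$, and a constant $C$ with $r_{A^*B_{Y^*}}(\sum a_iu_i)\leqslant C\|\sum a_ie_{m_{r_i}}\|_\xi$ for block sequences with respect to $\textsf{G}$. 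I would then use this domination, together with Lemma \ref{trip} and a DFJP-style interpolation, to factor $A=BC$ through a space $Z$ of the form $W^E_\wedge(\textsf{G}')$. Corollary \ref{7u} then bounds $Sz(Z)\leqslant Sz(E)=\omega^\xi$, and the reverse inequality is automatic from the factorization.

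For part (i), the sufficient direction is the same construction applied to the $\omega^{\zeta+1}$-well-constructed template (geometric $\vartheta_n=\vartheta^n$ and iterated $\mathcal{G}_n=\mathcal{G}[\mathcal{G}_{n-1}]$); the hypothesis $Sz(A,2^{-n})\leqslant\gamma^n$ for some $\gamma<\omega^\zeta$ is precisely what permits Corollary \ref{cc}'s estimate to pair uniformly in $n$ with the geometric decay of $\vartheta_n$. For the necessary direction, assume $A=BC$ factors through $Z$ with $Sz(Z)=\omega^{\omega^{\zeta+1}}$, so $Sz_{\omega^\zeta}(Z,\epsilon)<\omega$ for each $\epsilon>0$. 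Pulling this back across $A=BC$ (rescaling $\epsilon$ by $\|B\|\|C\|$) gives finiteness of $Sz_{\omega^\zeta}(A,\epsilon)$ for every $\epsilon$; the task is to upgrade this to the geometric bound $Sz(A,2^{-n})\leqslant\gamma^n$. Extracting the multiplicative (rather than additive) $n$-dependence is the main technical hurdle in this direction, and will likely follow from an iterative refinement tracking how each Szlenk derivation depends on the preceding one via Lemma \ref{sprites}.

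For part (ii), suppose for contradiction that $A=BC$ factors through $Z$ with $Sz(Z)=\omega^\xi=\omega^{\omega^\zeta}$ where $\zeta$ is a limit ordinal. The plan is to exploit the shrinking FDD of $X$ together with an operator analogue of Theorem \ref{ute} to lift the factorization so that it passes through a space $W^E_\wedge(\textsf{F}')$ for some sequence space $E$ with shrinking basis and $Sz(E)=\omega^\xi$. One then shows, using the combinatorics of Section 4 and Lemma \ref{door}, that any such $E$ must conform to the $\xi$-well-constructed mixed Schreier template. But for $\xi=\omega^\zeta$ with $\zeta$ limit that template is undefined, precisely because such $\xi$ admits no decomposition $\xi=\beta+\gamma$ with $\beta,\gamma<\xi$ and is not of the form $\omega^{\zeta'+1}$, and this arithmetic obstruction delivers the contradiction. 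The main difficulty is rigorously establishing the operator version of Theorem \ref{ute} respecting the shrinking-FDD structure of $X$, and then verifying that the lifted $E$ must genuinely fit the $\xi$-well-constructed template rather than some ad hoc sequence space.
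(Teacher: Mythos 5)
Your overall framing (encode factorization as domination by a sequence-space norm and factor through a $W^E_\wedge$ interpolation space, using Corollary \ref{7u} to bound the Szlenk index of the factoring space) is the right one, but each of the three parts has a gap where the key idea is missed.

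\textbf{Part (ii).} Your plan is far more elaborate than needed, and it is not clear it would close. The paper's argument is a single citation: by \cite{C3}, there is \emph{no} Banach space at all whose Szlenk index equals $\omega^{\omega^\zeta}$ for $\zeta$ a limit ordinal, so in particular no $Z$ with $Sz(Z)=Sz(A)$ exists. Your plan to lift the factorization through an operator analogue of Theorem \ref{ute}, show that the resulting sequence space must fit the $\xi$-well-constructed template, and derive a contradiction from that template being undefined, would require considerable development and (even if successful) proves much less directly than the known non-existence result. The obstruction is arithmetic in nature, but it manifests at the level of attainable Szlenk indices, not at the level of the well-constructed template.

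\textbf{Part (i), necessity.} You identify "extracting the multiplicative $n$-dependence" as the hurdle and propose an iterative refinement via Lemma \ref{sprites}; this is on the wrong track. The paper uses the submultiplicativity of the Szlenk index: if $A$ factors through $Z$ then $Sz(A,\ee)\leqslant Sz(Z,\ee/C)$ for some $C\geqslant 1$, and then one simply chooses $\gamma$ with $Sz(Z,1/2C)<\gamma<\omega^{\omega^{\zeta+1}}$ and applies $Sz(Z,1/(2C)^n)\leqslant Sz(Z,1/2C)^n<\gamma^n$. Without this submultiplicativity you have no route to a geometric bound, and Lemma \ref{sprites} does not supply it.

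\textbf{Part (iii).} The key error is that Corollary \ref{cc} dominates $r_{A^*B_{Y^*}}$ by the Schreier norm $\|\cdot\|_\xi=\|\cdot\|_{\mathcal{S}_\xi}$, and factoring $A$ through $X^{X_\xi}_\wedge(\textsf{G})$ would only give a space of Szlenk index $\omega^{\xi+1}$, not $\omega^\xi$ --- this is Brooker's weaker result, not the stated theorem. You assert the factoring space is $W^E_\wedge(\textsf{G}')$ with $E$ the $\xi$-well-constructed mixed Schreier space, but you have no domination by the $E$-norm. The actual proof \emph{must} exploit the decomposition $\xi=\beta+\gamma$: one chooses $\gamma_n\uparrow\omega^\gamma$ with $2Sz(A,2^m/2^{n+1})<\omega^\beta\gamma_n$ (possible precisely because $Sz(A)\leqslant\omega^\xi=\omega^\beta\cdot\omega^\gamma$), builds $\mathcal{G}_n=\mathcal{F}_n[\mathcal{S}_\beta]$ with $CB(\mathcal{F}_n)>\gamma_n$, and then level-by-level inclusions $\mathcal{H}(X,\textsf{F},A^*B_{Y^*},2^m/2^n)\cap[M_n]^{<\nn}\subset\mathcal{G}_n$ (obtained via Gasparis's dichotomy, Theorem \ref{gasp}) produce domination directly by the mixed Schreier norm. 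Corollary \ref{cc} is a coarser statement and cannot substitute for this tailored construction.
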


\begin{rem}\upshape If $X$ is a Banach space with bimonotone FDD $\textsf{G}$, $A:X\to Y$ is an operator,  $E$ is a sequence space, and $C\geqslant 1$ are such that for any $0=r_0<r_1<\ldots$ and any $(x_i)_{i=1}^\infty \in B_X^\nn\cap [G_j: r_{i-1}<j\leqslant r_i]$, then for any $(a_i)_{i=1}^\infty \in c_{00}$, $$\|A\sum_{i=1}^\infty a_ix_i\|\leqslant C\|\sum_{i=1}^\infty a_ie_{r_i}\|.$$   Then $A$ factors through $X_\wedge^E(\textsf{G})$.

Indeed, since $\|\cdot\|_{X^E_\wedge(\textsf{G})}\leqslant \|\cdot\|_X$, the formal inclusion $I:X\to X_\wedge^E(\textsf{G})$ is well-defined.   Fix $x\in c_{00}(\textsf{G})$ and suppose $$[x]_{X^E_\wedge(\textsf{G})}= \|\sum_{i=1}^\infty \|I_i^\textsf{G} x\|_X e_{\max I_i}\|_E.$$   Now if $I_i^\textsf{G}x\neq 0$, let $a_i=\|I^\textsf{G}_ix\|_X$ and let $x_i= a_i^{-1} I^\textsf{G}_i x$.    If $I_i^\textsf{G} x=0$, let $a_i=0$ and $x_i=0$. Then $x=\sum_{i=1}^\infty a_i x_i$ and, by hypothesis, $$\|Ax\|=\|A\sum_{i=1}^\infty a_ix_i\|\leqslant C \|\sum_{i=1}^\infty a_i e_{\max I_i}\|_E=C[x]_{X^E_\wedge(\textsf{G})}.$$   

Now for any $x\in c_{00}(\textsf{G})$, $$\|Ax\| \leqslant \inf\Bigl\{\sum_{i=1}^n \|Ax_i\|: n\in\nn, x=\sum_{i=1}^n x_i\Bigr\} \leqslant C \inf\Bigl\{\sum_{i=1}^n [x_i]_{X^E_\wedge(\textsf{G})}: n\in\nn, x=\sum_{i=1}^n x_i\Bigr\}=C\|x\|_{X^E_\wedge(\textsf{G})}.$$    From this it follows that $A|_{c_{00}(\textsf{G})}$ extends to a norm at most $C$ operator $J:X^E_\wedge(\textsf{G})\to Y$, and $A=JI$. 

\label{hops}

\end{rem}

\begin{proof}$(i)$ Suppose $\xi=\omega^{\zeta+1}$.   First suppose that $Z$ is a Banach space such that $A$ factors through $Z$ and $Sz(Z)=\omega^{\omega^{\zeta+1}}$.   Then there exists a constant $C\geqslant 1$ such that $Sz(A, \ee)\leqslant Sz(Z, \ee/C)$ for all $0<\ee<1$.  Since $Sz(Z)=\omega^{\omega^{\zeta+1}}$, there exists $\gamma<\omega^{\omega^{\zeta+1}}$ such that $Sz(Z, 1/2C)<\gamma$.   For all $n\in\nn$, $$Sz(A, 1/2^n) \leqslant Sz(Z, 1/C 2^n) \leqslant Sz(Z, 1/(2C)^n)\leqslant Sz(Z, 1/2C)^n<\gamma^n.$$

For the converse, suppose there exists $\gamma<\omega^{\omega^{\zeta+1}}$ such that $Sz(A, 1/2^n)<\gamma^n$ for all $n\in\nn$. Fix $m\in\nn$ such that $2^m>\|A\|$.     Fix a regular family $\mathcal{G}$ with $2\gamma<CB(\mathcal{G})<\omega^{\omega^{\zeta+1}}$.  Let $\mathcal{G}_0=\mathcal{S}_0$ and $\mathcal{G}_n=\mathcal{G}[\mathcal{G}_{n-1}]$ for $n\in\nn$.  Let $\vartheta=2/3$ and let $X=X(\mathcal{G}_n, \vartheta^n)$.   Note that $Sz(X)=\omega^{\omega^{\zeta+1}}$ by Lemma \ref{ww}.     Recursively select $\nn\supset M_1\supset M_2\supset \ldots$ such that for each $n\in\nn$, either $$\mathcal{H}(X, \textsf{F}, A^*B_{Y^*}, 1/2^{m+n})\cap [M_n]^{<\nn}\subset \mathcal{G}_n$$ or $$\mathcal{G}_n\cap [M_n]^{<\nn}\subset \mathcal{G}(X, \textsf{F}, A^*B_{Y^*}, 2^m/2^n).$$  Now since $$CB(\mathcal{H}(X, \textsf{F}, A^*B_{Y^*}, 2^m/2^n)) \leqslant 2 Sz(A, 1/2^n)< 2 \gamma^n \leqslant (2\gamma)^n < CB(\mathcal{G}_n),$$  it must be the case that $$\mathcal{H}(X, \textsf{F}, A^*B_{Y^*}, 2^m/2^n)\cap [M_n]^{<\nn}\subset \mathcal{G}_n$$ for all $n\in\nn$.     Fix $0=m_0<m_1<m_2<\ldots$ such that $m_n\in M_n$.   For each $n\in\nn$, let $G_n=[F_j: m_{n-1}<j\leqslant m_n]$ and let $\textsf{G}=(G_n)_{n=1}^\infty$.   Let $E$ be the sequence space whose norm is given by $\|\sum_{i=1}^\infty a_ie_i\|_E=\|\sum_{i=1}^\infty a_i e_{m_i}\|_X$.  Let $$C=2^{m+1}\sum_{n=1}^\infty \frac{n}{2^{n-1}}+ \frac{3^n}{4^n}<\infty.$$   

Suppose $0=r_0<r_1<\ldots$ and $(x_n)_{n=1}^\infty\in B_X^\nn\cap \prod_{n=1}^\infty [G_j:r_{n-1}<j\leqslant r_n]$.   Fix $(a_i)_{i=1}^\infty\in c_{00}$ and $y^*\in B_{Y^*}$ such that $$\|A\sum_{i=1}^\infty a_ix_i\|=A^*y^*(\sum_{i=1}^\infty a_i x_i).$$   For each $n\in\nn$, let $$B_n=\{i<n: |A^*y^*(x_i)|\in (2^m/2^n, 2^m/2^{n-1}]\}$$ and $$C_n=\{i\geqslant n: |A^*y^*(x_i)|\in (2^m/2^n, 2^m/2^{n-1}]\}.$$  Note that $|B_n|<n$, whence $$A^*y^*(\sum_{i\in B_n} a_i x_i) \leqslant \frac{2^m}{2^{n-1}}  \sum_{i\in B_n}|a_i|\leqslant \frac{n2^m}{2^{n-1}}\|\sum_{i\in B_n} e_{m_{r_i}}\|\leqslant \frac{n2^m }{2^{n-1}}\|\sum_{i\in B_n} e_{r_i}\|_E.$$ Note also that  $$(m_{r_i})_{i\in C_n}\in \mathcal{H}(X, \textsf{F}, A^*B_{Y^*}, 2^m/2^n)\cap [M_n]^{<\nn}\subset \mathcal{G}_n.$$  From this it follows that $$A^*y^*(\sum_{i\in C_n} a_i x_i) \leqslant \frac{2^m}{2^{n-1}}  \sum_{i\in C_n}|a_i|\leqslant \frac{2^m}{2^{n-1}}\cdot \frac{3^n}{2^n}\|\sum_{i\in C_n} e_{m_{r_i}}\| =  \frac{n2^{m+1} 3^n }{4^n}\|\sum_{i\in C_n} e_{r_i}\|_E.$$    Then $$\|A\sum_{i=1}^\infty a_ix_i\|=A^*y^*(\sum_{i=1}^\infty a_ix_i) \leqslant C\|\sum_{i=1}^\infty a_ie_{r_i}\|_E.$$   Then as noted in Remark \ref{hops}, $A$ factors through $X^E_\wedge(\textsf{G})$.   By Corollary \ref{7u}, $Sz(X^E_\wedge(\textsf{G}))=\omega^{\omega^{\zeta+1}}$.

$(ii)$ By \cite{C3}, there is no Banach space with Szlenk index $\omega^{\omega^\zeta}$, $\zeta$ a limit ordinal.

$(iii)$ Write $\xi=\beta+\gamma$ with $\beta, \gamma<\xi$.   Fix $m\in\nn$ such that $2^m>\|A\|$.    We may fix an increasing sequence $\gamma_n$ of ordinals such that $\gamma_n\uparrow \omega^\gamma$ and $2Sz(A, 2^m/2^{n+1})<\omega^\beta \gamma_n$.   Fix a sequence of regular families $\mathcal{F}_n$ with $\gamma_n<CB(\mathcal{F}_n)$ and let $\mathcal{G}_0=\mathcal{S}_\beta$, $\mathcal{G}_n=\mathcal{F}_n[\mathcal{G}_0]$.  Let $X=X(\mathcal{G}_n, (2/3)^n)$ be the mixed Schreier space and note that $X$ is $\xi$-well-constructed.  As in $(i)$, for each $n\in\nn$,  we find $ M_n$ such that $\mathcal{H}(X, \textsf{F}, A^*B_{Y^*}, 2^m/2^n)\cap [M_n]^{<\nn}\subset \mathcal{G}_n$.  We then select $m_1<m_2<\ldots $ such that $m_n\in M_n$.     Arguing as in $(i)$, with $\textsf{G}$, $E$,  $C$ defined in the same way, we deduce that $A$ factors through $X^E_\wedge(\textsf{F})$, which has Szlenk index $\omega^\xi$.

\end{proof}

\begin{rem}\upshape If $X$ has a shrinking FDD and $A:X\to Y$ is an operator with $Sz(A)=\omega$, then $A$ factors through a Banach space $Z$ with $Sz(Z)=\omega$ if and only if there exists $l\in\nn$ such that $Sz(A,1/2^n) \leqslant l^n$ for all $n\in\nn$.  This has already been shown in \cite{KP}.  Our argument above is essentially a transfinite extension of this fact.  Indeed, in this case, $\textsf{p}_0(A)<\infty$.  Therefore if $1/r+1/s=1$ and $\textsf{p}_0(A)<s<\infty$, $A$ factors through $X^{X_{0,r}}_\wedge(\textsf{G})=X^{\ell_r}_\wedge(\textsf{G})$ as a consequence of Corollary \ref{nowitstimetoreallybaernnotice} and Remark \ref{hops}.

\end{rem}

\begin{theorem} For any $\xi\in (0,\omega_1)\setminus \{\omega^\eta: \eta\text{\ a limit ordinal}\}$, there exists a Banach space $\mathfrak{G}_\xi$ with a shrinking basis and $Sz(\mathfrak{G}_\xi)=\omega^\xi$ such that if $A:X\to Y$ is a separable range operator with $Sz(A)<\omega^\xi$, then $A$ factors through a subspace and through a quotient of $\mathfrak{G}_\xi$.   Moreover, if $X$ has a shrinking FDD, $A$ may be taken to factor through $\mathfrak{G}_\xi$.  

\label{universal theorem}

\end{theorem}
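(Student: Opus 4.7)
My plan is to define $\mathfrak{G}_\xi := \mathfrak{U}^E_\wedge(\textsf{U})$, where $\mathfrak{U}$ is the Schechtman universal space with its distinguished FDD $\textsf{U}$ and $E = X(\mathcal{G}_n, \vartheta_n)$ is a $\xi$-well-constructed mixed Schreier space.  In each of the three cases of $\xi$-well-constructedness, a direct computation from the composition formula of Proposition \ref{deep facts}(i) shows that $\sup_n CB(\mathcal{G}_n) = \omega^\xi$.  By Corollary \ref{7u}, $\textsf{U}$ is shrinking in $\mathfrak{G}_\xi$ and $Sz(\mathfrak{G}_\xi)\leqslant Sz(E) = \omega^\xi$, the latter equality coming from Lemma \ref{ww}(ii); refining each finite-dimensional $U_n$ by any basis yields a shrinking basis of $\mathfrak{G}_\xi$.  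The lower bound $Sz(\mathfrak{G}_\xi)\geqslant \omega^\xi$ will follow from the factorizations below, since $\mathfrak{G}_\xi$ will turn out to contain isomorphic copies of Banach spaces with Szlenk indices arbitrarily close to $\omega^\xi$.

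First I would dispatch the ``moreover'' clause.  Suppose $X$ has a shrinking FDD $\textsf{F}$ and $A:X\to Y$ satisfies $Sz(A)\leqslant \omega^\eta < \omega^\xi$.  Applying Corollary \ref{cc} to $K = A^*B_{Y^*}$ produces a blocking $\textsf{G}$ of $\textsf{F}$, a sequence $(m_n)\in [\nn]$, and a constant $C$ such that for every block sequence $(u_i)$ with respect to $\textsf{G}$ and every $(a_i)\in c_{00}$,
$$\Bigl\|A\sum_i a_i u_i\Bigr\|_Y \leqslant C\Bigl\|\sum_i a_i e_{m_{r_i}}\Bigr\|_\eta.$$
Reindexing as in the proof of the previous theorem, the right-hand side is the norm of $\sum_i a_i e_{r_i}$ in a sequence space $\widetilde E = X_{\widetilde{\mathcal{G}}}$ with $\widetilde{\mathcal{G}} = \mathcal{S}_\eta(M^{-1})$, a regular family satisfying $CB(\widetilde{\mathcal{G}}) = \omega^\eta + 1 < \omega^\xi = \sup_n CB(\mathcal{G}_n)$.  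By Remark \ref{hops}, $A$ factors through $X^{\widetilde E}_\wedge(\textsf{G})$, and Proposition \ref{Schechtman}(iii) applied with $\mathcal{G} := \widetilde{\mathcal{G}}$ delivers a complemented isomorphic embedding of $X^{\widetilde E}_\wedge(\textsf{G})$ into $\mathfrak{G}_\xi$, yielding the desired factorization of $A$ through $\mathfrak{G}_\xi$.

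For the main statement, assume $A:X\to Y$ has separable range and $Sz(A)\leqslant \omega^\eta < \omega^\xi$.  Set $Y_0 = \overline{A(X)}$, a separable subspace of $Y$ satisfying $Sz(Y_0)\leqslant Sz(A)\leqslant \omega^\eta$ by a standard Szlenk-index inheritance.  Theorem \ref{ute}(i) produces a Banach space $W$ with bimonotone FDD $\textsf{F}$ such that $Y_0$ is isomorphic both to a subspace of and to a quotient of $W^{X_\eta}_\wedge(\textsf{F})$.  Proposition \ref{Schechtman}(iii) again complementably embeds $W^{X_\eta}_\wedge(\textsf{F})$ into $\mathfrak{G}_\xi$, so $Y_0$ is isomorphic to a subspace of $\mathfrak{G}_\xi$ and to a quotient of a complemented subspace of $\mathfrak{G}_\xi$ --- and hence also to a quotient of $\mathfrak{G}_\xi$ itself, since a quotient of a complemented subspace is visibly a quotient of the whole space.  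Writing $A = \iota\circ A_1$ with $A_1:X\to Y_0$ the range corestriction and $\iota:Y_0\hookrightarrow Y$ the inclusion then gives both required factorizations of $A$.

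I expect the main technical hurdle to be verifying the norm-equivalence hypothesis of Proposition \ref{Schechtman}(iii) for the spaces $X^{\widetilde E}_\wedge(\textsf{G})$ and $W^{X_\eta}_\wedge(\textsf{F})$ --- namely, that $\|\cdot\|_{W^{X_\eta}_\wedge(\textsf{F})}$ is equivalent to the iterated norm $\|\cdot\|_{(W^{X_\eta}_\wedge(\textsf{F}))^{X_\eta}_\wedge(\textsf{F})}$.  This idempotence-up-to-equivalence of the $(\cdot)^{X_\eta}_\wedge(\cdot)$ construction is expected from properties $R$, $S$, and $T$ of the canonical basis of $X_\eta$, but will require careful direct unpacking of the $\vee$- and $\wedge$-norm definitions.
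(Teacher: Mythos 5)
Your reduction of the general case to $Y_0=\overline{A(X)}$ rests on the claim that $Sz(Y_0)\leqslant Sz(A)$, and this ``standard inheritance'' is false: the Szlenk index of an operator controls $A^*B_{Y^*}$, not $B_{Y_0^*}$. For instance, if $(y_n)_{n=1}^\infty$ is dense in the unit ball of an arbitrary separable Banach space $Y_0$ and $A:\ell_1\to Y_0$ is defined by $Ae_n=2^{-n}y_n$, then $A$ is compact (so $Sz(A)$ is as small as possible) while $\overline{A(\ell_1)}=Y_0$ can have Szlenk index as large as you like, or even fail to be Asplund. Since your entire treatment of the separable-range case funnels through applying Theorem \ref{ute}(i) to $Y_0$, that case collapses. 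The paper instead factors the \emph{operator}: through a separable $Z$ with $Sz(Z)\leqslant\omega^{\zeta+1}$ via Brooker when $\xi$ is a limit, and, when $\xi=\zeta+1$, through a separable $Z$ with $\textsf{p}_\zeta(Z)<2$ via \cite{TAMS}, followed by Theorem \ref{ute}(ii) with the Baernstein space $X_{\zeta,2}$. The successor case cannot be repaired within your architecture simply by substituting Brooker's $Z$: you would need Proposition \ref{Schechtman}(iii) with $\mathcal{G}=\mathcal{S}_{\zeta+1}$, but $CB(\mathcal{S}_{\zeta+1})=\omega^{\zeta+1}+1>\omega^\xi=\sup_n CB(\mathcal{G}_n)$, so its hypothesis fails; this is precisely why the paper builds $\mathfrak{S}_\xi=\mathfrak{U}^{X_{\zeta,2}}_\wedge(\textsf{U})$ from the Baernstein space (using parts (i) and (ii) of Proposition \ref{Schechtman} with the same $E$ inside and outside) rather than from a mixed Schreier space when $\xi$ is a successor. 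Note also that general factorization through a space of Szlenk index exactly $\omega^\zeta$ is impossible in general by Theorem \ref{main1}, so some device like $\textsf{p}_\zeta<2$ is unavoidable here.

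Two further points. First, your claim that ``refining each finite-dimensional $U_n$ by any basis yields a shrinking basis of $\mathfrak{G}_\xi$'' is unjustified: concatenating bases of the $U_n$ gives a basis of the whole space only if the basis constants of the chosen bases are uniformly bounded, and Schechtman's $U_n$ (which are universal for finite-dimensional spaces) admit no such uniform bound. The paper handles this with Pe\l czy\'{n}ski's technique, embedding each $U_n$ $2$-complementably into a space $I_n$ with basis constant at most $2$ and setting $\mathfrak{G}_\xi=\mathfrak{S}_\xi\oplus_\infty(\oplus_{n=1}^\infty J_n)_{c_0}$; your $\mathfrak{U}^E_\wedge(\textsf{U})$ alone has only a shrinking FDD. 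Second, your ``moreover'' argument (Corollary \ref{cc}, Remark \ref{hops}, then Proposition \ref{Schechtman}(iii) with $\widetilde{\mathcal{G}}=\mathcal{S}_\eta(M^{-1})$) is essentially the paper's route for the limit case and is sound in outline, but you leave the norm-equivalence hypothesis of Proposition \ref{Schechtman}(iii) (idempotence of the $\wedge$-construction over $X_{\widetilde{\mathcal{G}}}$) as an unverified expectation; it does follow from property $T$, but as written it is a promissory note rather than a proof.
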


\begin{proof} Case 1: $\xi$ is a  successor, say $\xi=\zeta+1$.    Let $\mathfrak{S}_\xi=\mathfrak{U}^{X_{\zeta, 2}}_\wedge(\textsf{U})$.   By a technique of Pe\l czy\'{n}ski, for each $n\in \nn$, there exists a finite dimensional space $I_n$ having  basis with basis constant not more than $2$ and such that $U_n\leqslant I_n$ and $U_n$ is $2$-complemented in $I_n$.  Let $P_n:I_n\to U_n$ be a projection with norm  not more than $2$ and let $J_n=\ker(P_n)$.   Let $\mathfrak{G}_\xi=\mathfrak{S}_\xi\oplus_\infty (\oplus_{n=1}^\infty J_n)_{c_0}$.     Then $\mathfrak{G}_\xi$ has a shrinking basis and $$Sz(\mathfrak{G}_\xi)=\max\{Sz(\mathfrak{S}_\xi), Sz(\oplus_{n=1}^\infty J_n)_{c_0})\}=\omega^{\zeta+1}.$$   

Now suppose that $A:X\to Y$ is an operator with separable range and  $Sz(A)\leqslant \omega^\zeta$.    Then by \cite{TAMS}, $A$ factors through a separable Banach space $Z$ with $\textsf{p}_\zeta(Z)<2$.      By Theorem \ref{ute}, there exists a Banach space $W$ with FDD $\textsf{F}$ such that $Z$ is isomorphic to both a subspace and to a quotient of $W^{X_{\zeta, 2}}_\wedge(\textsf{F})$.      By Proposition \ref{Schechtman}, $W^{X_{\zeta, 2}}_\wedge(\textsf{F})$ is isomorphic to a complemented subspace of $\mathfrak{S}_\xi$, and is therefore isomorphic to a complemented subspace of $\mathfrak{G}_\xi$.  Then $A$ factors through $Z$, which is isomorphic to both a subspace and to a quotient of $\mathfrak{G}_\xi$.

If $X$  has a shrinking FDD, say $\textsf{F}$, then by Corollary \ref{nowitstimetoreallybaernnotice} and Remark \ref{hops}, there exists a blocking $\textsf{G}$ of $\textsf{F}$ such that $A$ factors through $X^{X_{\zeta, 2}}_\wedge(\textsf{G})$, since $\textsf{p}_\zeta(A)=0$.   The space $X^{X_{\zeta, 2}}_\wedge(\textsf{G})$ is isomorphic to a complemented subspace of $\mathfrak{G}_\xi$, whence $A$ can be taken to factor through $\mathfrak{G}_\xi$.

Case $2$: $\xi$ is a limit ordinal. Let $E=X(\mathcal{G}_n, \vartheta_n)$ be a $\xi$-well-constructed  mixed Schreier space, so that  $Sz(E)=\omega^\xi$, $\mathfrak{S}_\xi=\mathfrak{U}^E_\wedge(\textsf{U})$, and $\mathfrak{G}_\xi=\mathfrak{S}_\xi\oplus_\infty(\oplus_{n=1}^\infty J_n)_{c_0}$, where $J_n$ is chosen as in the previous case. Then $Sz(\mathfrak{G}_\xi)=\omega^\xi$.  

Now suppose that $A:X\to Y$ is a separable range operator with $Sz(A)=\omega^\zeta<\omega^\xi$.     Then $A$ factors through a separable Banach space $Z$ with $Sz(Z)\leqslant \omega^{\zeta+1}$ by \cite{Brooker}, and $Z$ is isomorphic to both a subspace and to a quotient of a Banach space $W^{X_{\zeta+1}}_\wedge(\textsf{F})$, where $W$ is some Banach space and $\textsf{F}$ is an FDD for $W$ by Theorem \ref{ute}.   Then by Proposition \ref{Schechtman}, $W^{X_{\zeta+1}}_\wedge(\textsf{F})$ is isomorphic to a complemented subspace of $\mathfrak{G}_\xi$.

If $X$ has a shrinking FDD, say $\textsf{F}$, then by  Corollary \ref{cc} and Remark \ref{hops}, there exists a blocking $\textsf{G}$ of $\textsf{F}$ such that $A$ factors through $X^{X_{\zeta+1}}_\wedge(\textsf{G})$.    By Proposition \ref{Schechtman}, $X^{X_{\zeta+1}}_\wedge(\textsf{G})$ embeds complementably in $\mathfrak{G}_\xi$, and so $A$ can be taken to factor through $\mathfrak{G}_\xi$ in this case.

\end{proof}

\begin{rem}\upshape By a result of Johnson and Szankowski \cite{JS}, there is no separable Banach space through which all compact operators factor.  A consequence of this result is that for $0<\xi<\omega_1$, there cannot be a separable Banach space such that every operator with Szlenk index less than $\omega^\xi$ factors through that space, since every compact operator has Szlenk index $1$. Thus having the operators in Theorem \ref{universal theorem} factor through a subspace or a quotient rather than through the whole space is necessary.

\end{rem}

\begin{rem}\upshape  For any ordinal $\xi<\omega_1$ and any Banach space $Z$ with $Sz(Z)=\omega^\xi$, there is a separable Banach space $X$ (which can be taken to be a mixed Schreier space if $0<\xi$) with $Sz(Z)=Sz(X)$ such that $X$ is not isomorphic to any subspace of any quotient of $Z$.    Indeed, if $\xi=0$ and $Sz(Z)=\omega^\xi=1$, $\dim Z<\infty$, and we simply take $X$ to have $\dim Z<\dim X<\infty$. 

  If $Z=\omega^{\zeta+1}$, we fix $Sz(Z, 1/2)<\gamma<\omega^{\omega^{\zeta+1}}$ and let $X$ be a $\xi$-well-constructed mixed Schreier space with $Sz(X)=\omega^{\omega^{\zeta+1}}$ such that $Sz(X, (2/3)^n)\geqslant \gamma^n$.    If $\xi=\beta+\gamma$ for $\beta, \gamma<\xi$, we fix $\gamma_n<\omega^\gamma$ such that $Sz(Z, (1/2)^n) < \omega^\beta \gamma_n$ and construct a mixed Schreier space $X$ such that $Sz(X, (2/3)^n)>\omega^\beta \gamma_n$ and $Sz(X)=\omega^\xi$.  In either of these two cases, $X$ cannot be isomorphic to a subspace of a quotient of $Z$, otherwise there would exist some $C\geqslant 1$ such that $$Sz(X, \ee)\leqslant Sz(Z, \ee/C)$$ for all $0<\ee<1$.  But if $n\in\nn$ is such that $(1/2)^n<(2/3)^n/C$, our choice of $X$ yields that $$\gamma^n \leqslant Sz(X, (2/3)^n) \leqslant Sz(Z, (2/3)^n/C) \leqslant Sz(Z, (1/2)^n)<\gamma^n$$ in the first case, and $$\omega^\beta \gamma_n \leqslant Sz(X, (2/3)^n) \leqslant Sz(Z, (2/3)^n/C) \leqslant Sz(Z, (1/2)^n)<\omega^\beta \gamma_n$$ in the second case. 

\end{rem}

\begin{rem}\upshape For $\xi=\omega^\zeta$, $\zeta$ a limit ordinal, there is no Banach space with Szlenk index $\omega^\xi$, which is the reason Theorem \ref{universal theorem} is limited to $\xi\in (0, \omega_1)\setminus \{\omega^\eta: \eta\text{\ a limit ordinal}\}$.  However, for any countable limit ordinal $\eta$, we may fix a sequence $\eta_n\uparrow \eta$ and define $\mathfrak{H}_\eta=(\oplus_{n=1}^\infty \mathfrak{G}_{\omega^{\eta_n}+1})_{c_0}$.  We may define a diagonal operator $A_\eta:\mathfrak{H}_\eta\to \mathfrak{H}_\eta$ by $A_\eta|_{\mathfrak{G}_{\omega^{\eta_n}+1}}=\frac{1}{n}I_{\mathfrak{G}_{\omega^{\eta_n}+1}}$.   Then $$Sz(A_\eta)=\sup_n Sz(\mathfrak{G}_{\omega^{\eta_n}+1})=\sup_n \omega^{\omega^{\eta_n}+1}=\omega^{\omega^\eta},$$ and if $A:X\to Y$ is any operator between separable Banach spaces with $Sz(A)<\omega^{\omega^\eta}$, there exist a subspace $Z$ of $\mathfrak{H}_\eta$ and operators $R:X\to Z$, $L:Z\to Y$ such that $A_\eta(Z)=Z$ and  such that $LA_\eta R=A$.

\end{rem}

\end{document}